\colorlet{siaminlinkcolor}{blue!50!black}
\colorlet{siamexlinkcolor}{blue!50!black}
\newcommand{\rd}{{\mathrm d}}
\newcommand{\rp}{{\mathrm p}}
\newcommand{\rT}{{\mathrm{T}}}
\newcommand{\vx}{{\bf x}}
\newcommand{\vu}{{\bf u}}
\newcommand{\vM}{{\bf M}}
\newcommand{\vmu}{{\mbox{\boldmath$\mu$}}}
\newcommand{\vbeta}{{\mbox{\boldmath$\beta$}}}
\newcommand{\vtheta}{{\mbox{\boldmath$\theta$}}}
\newcommand{\calB}{{\cal B}}
\newcommand{\calF}{{\cal F}}
\newcommand{\calH}{{\cal H}}
\newcommand{\calL}{{\cal L}}
\newcommand{\calM}{{\cal M}}
\newcommand{\calN}{{\cal N}}
\newcommand{\calR}{{\cal R}}
\newcommand{\calU}{{\cal U}}
\newcommand{\argmax}{\operatornamewithlimits{argmax}}
\newcommand{\argmin}{\operatornamewithlimits{argmin}}
\newcommand{\Eb}{\mathbb{E}}
\newcommand{\Nb}{\mathbb{N}}
\newcommand{\Pb}{\mathbb{P}}
\newcommand{\Qb}{\mathbb{Q}}
\newcommand{\Rb}{\mathbb{R}}
\newcommand{\Xb}{\mathbb{X}}
\newcommand{\tr}{{\mathrm {Tr}}}
\newcommand{\F}{\mathscr{F}}
\newcommand{\A}{\mathscr{A}}
\newcommand{\Pc}{\mathscr{P}}
\newcommand{\vm}{{\bf m}}
\renewcommand{\maketag@@@}[1]{\hbox{\m@th\normalsize\normalfont#1}}%
\newtheorem{theorem}{Theorem}[section]
\newtheorem{lemma}[theorem]{Lemma}
\newtheorem{proposition}[theorem]{Proposition}
\newtheorem{definition}{Definition}[section]
\newcommand{\vast}{\bBigg@{3.5}}
\newcommand{\Vast}{\bBigg@{4}}
\newcommand{\vastt}{\bBigg@{4.5}}
\newcommand{\Vastt}{\bBigg@{5}}
\newcommand{\beginsupplement}{
        \setcounter{table}{0}
        \renewcommand{\thetable}{S\arabic{table}}
        \setcounter{figure}{0}
        \renewcommand{\thefigure}{S\arabic{figure}}
        \setcounter{section}{0}
        \renewcommand{\thesection}{S\arabic{section}}
        \setcounter{equation}{0}
        \renewcommand{\theequation}{S\arabic{equation}}
        \setcounter{algorithm}{0}
        \renewcommand{\thealgorithm}{S\arabic{algorithm}}
     }
\setlist[enumerate]{
  label={\upshape(\roman*)},
  leftmargin=30pt,
  labelwidth=0pt,
  align=right,
  nosep
}
\newacronym{SPDE}{SPDE}{Stochastic Partial Differential Equation}
\newacronym{ODE}{ODE}{Ordinary Differential Equation}
\newacronym{PDE}{PDE}{Partial Differential Equation}
\newacronym{SDE}{SDE}{Stochastic Differential Equation}
\newacronym{POD}{POD}{Proper Orthogonal Decomposition}
\newacronym{MOR}{MOR}{Model Order Reduction}
\newacronym{MPC}{MPC}{Model Predictive Control}
\newacronym{RMSE}{RMSE}{Root Mean Squared Error}
\newacronym{HJB}{HJB}{Hamilton-Jacobi-Bellman}
\newacronym{ANN}{ANN}{Artificial Neural Network}
\begin{document}

\preprint{APS/123-QED}

% \title{Variational Optimization for Distributed and Boundary Control of Stochastic Fields}% Force line breaks with \\
% \thanks{A footnote to the article title}%
%TC:ignore
\title{Leveraging Stochasticity for Open Loop and Model Predictive Control of Complex Fluid Systems}

\author{George I. Boutselis}%
\thanks{Authors contributed equally. Authors appear in last name order.}%
%Lines break automatically or can be forced with \\
\author{Ethan N. Evans}%
\thanks{Authors contributed equally} \thanks{Corresponding author. email: eevans89@gmail.com}%

\author{Marcus A. Pereira}%
\thanks{Authors contributed equally}  \altaffiliation[Also at ]{Georgia Institute of Technology, Institute of Robotics and Intelligent Machines}

\author{Evangelos A. Theodorou}
 \altaffiliation[Also at ]{Georgia Institute of Technology, Institute of Robotics and Intelligent Machines}
%  \thanks{Corresponding author. email: evangelos.theodorou@gatech.edu}
\affiliation{%
  Georgia Institute of Technology Department of Aerospace Engineering, \\
  Atlanta, GA, USA 30332
}%

\date{\today}% It is always \today, today,
             %  but any date may be explicitly specified

\begin{abstract}
Stochastic Spatio-Temporal processes are prevalent across domains ranging from modeling of plasma to the turbulence in fluids to the wave function of quantum systems. This letter studies a measure-theoretic description of such systems by describing them as evolutionary processes on Hilbert spaces, and in doing so, derives a framework for spatio-temporal manipulation from fundamental thermodynamic principles. This approach yields a variational optimization framework for controlling stochastic fields. The resulting scheme is applicable to a wide class of spatio-temporal processes and can be used for optimizing parameterized control policies. Our simulated experiments explore the application of two forms of this approach on four stochastic spatio-temporal processes, with results that suggest new perspectives and directions for studying stochastic control problems for spatio-temporal systems.
\end{abstract}

% \begin{description}
% \item[Usage]
% Secondary publications and information retrieval purposes.
% \item[Structure]
% You may use the \texttt{description} environment to structure your abstract;
% use the optional argument of the \verb+\item+ command to give the category of each item. 
% \end{description}
% \end{abstract}

%\keywords{Suggested keywords}%Use showkeys class option if keyword
                              %display desired
\maketitle
%TC:endignore

\section{Introduction and Related Work}
Many complex systems in nature vary spatially and temporally, and are often represented as Stochastic Partial Differential Equations (SPDEs). These systems are ubiquitous in nature and engineering, and can be found in fields such as applied physics, robotics, autonomy, and finance~\cite{chow2007stochastic,da1992stochastic,StochasticNavierStokes_2004,SPDEs_Neural2017,Pardouxt_1980,PhysRevE_Bang1994,RAMA_SPDEs_Economics_2005,Belavkin_ControlV1_2005,Belavkin_ControlV2_2005}. Examples of stochastic spatio-temporal processes include the \textit{Poisson-Vlassov} equation in plasma physics, the Heat, \textit{Burgers} and \textit{Navier-Stokes} equations in fluid mechanics, the \textit{Zakai} and \textit{Belavkin} equations in classical and quantum filtering. Despite their ubiquity and significance to many areas of science and engineering, algorithms for stochastic control of such systems are scarce.

% Stochastic fields are systems represented by stochastic partial differential equations (SPDEs) and can be found in various applications ranging from applied physics to robotics and autonomy  \cite{chow2007stochastic,da1992stochastic,StochasticNavierStokes_2004,SPDEs_Neural2017,Pardouxt_1980,PhysRevE_Bang1994,RAMA_SPDEs_Economics_2005,Belavkin_ControlV1_2005,Belavkin_ControlV2_2005}. The \textit{Poisson-Vlassov} equation in plasma physics, the Heat, \textit{Burgers} and \textit{Navier-Stokes} equations in fluid mechanics, the \textit{Zakai} and \textit{Belavkin} SPDEs in classical and quantum filtering are just some examples of infinite-dimensional stochastic systems. Despite their ubiquity and significance to many areas of science and engineering, there is very little work on algorithms for stochastic control of such systems.

The challenges of controlling SPDEs include significant control signal time-delays, dramatic under-actuation, high dimensionality, regular bifurcations, and multi-modal instabilities. For many SPDEs, existence and uniqueness of solutions remains an open problem, and when solutions exist, they often have a weak notion of differentiability if at all. Their performance analysis must be treated with functional calculus, and their state vectors are often most conveniently described by vectors in an infinite-dimensional time-indexed Hilbert space, even for scalar 1-dimensional SPDEs. These and other challenges together represent a large subset of the current-day challenges facing the fluid dynamics and automatic control communities, and present difficulties in the development of mathematically consistent and numerically realizable algorithms.
 
The majority  of computational stochastic control methods in the literature have been dedicated to finite-dimensional systems. Algorithms for decision making under uncertainty of such systems typically rely on standard optimality principles from the Stochastic Optimal Control (SOC) literature, namely the Dynamic Programming (or Bellman) principle and the stochastic Pontryagin Maximum principle~\cite{Pontryagin1962,Bellman1964,yong1999stochastic}. The resulting algorithms typically require solving the Hamilton-Jacobi-Bellman (HJB) equation; a backward nonlinear partial differential equation (PDE) of which solutions are not scalable to high dimensional spaces. 

\begin{figure}[t!]
    \includegraphics[width=0.49\textwidth]{ctrl_arch6.png}
    \caption{Overview of architecture for the control of spatio-temporal stochastic systems, where $\rd W_j^r$ denotes a Cylindrical Wiener process at time step $j$ for simulated system rollout $r$ and  $X_{0:T}^r$ denotes the state trajectory of simulated system rollout $r$. See  \cref{eq:Iterative_optimalvariation1,eq:J_i} and related explanations for a more complete explanation.}
    \label{fig:architecture}
    \vspace{-1em}
\end{figure}

Several works (e.g. \cite{Christofides2009, GOMES201733} for the Kuramoto-Sivashinsky SPDE) propose model predictive control based methodologies for reduced order models of SPDEs based on SOC princples. These reduced order methods transform the original SPDE into a finite set of coupled Stochastic Differential Equations (SDEs). In SDE control, probabilistic representations of the HJB PDE can solve scalability via sampling techniques~\cite{Pardoux_Book2014,Fleming2006} including iterative sampling and/or parallelizable implementations~\cite{Exarchos_2017, williams2017model}. These methods have been explored in a reinforcement learning context for SPDEs \cite{Evans2019IDVRL,evans2020spatio,evans2021stochastic}.
 
Recently, a growing body of work considers deterministic PDEs, and utilize finite dimensional machine learning methods such as Deep Neural Network surrogate models which utilize standard SOC-based methodologies. In the context of fluid systems, these approaches are increasingly widespread in the literature~\cite{bieker2019deep,nair2019cluster,mohan2018deep,morton2018deep,rabault2019artificial}. A critical issue in applying controllers that rely on a limited number of modes is that they can produce concerning emergent phenomena, including spillover instabilities \cite{curtain1986robust, 1101798} and failing latent space stabilizability conditions \cite{morton2018deep}.

Outside the large body of finite dimensional methods for PDEs and/or SPDEs are a few works that attempt to extend the classical HJB theory for systems described by SPDEs. These are comprehensively explored in \cite{fabbri} and include both distributed and boundary control problems. Most notably, \cite{DaPrato1999} investigates explicit solutions to the HJB equation for the stochastic Burgers equation based on an exponential transformation, and \cite{feng2006} provides an extension of the large deviation theory to infinite dimensional spaces that creates connections to HJB theory. These and most other works on HJB theory for SPDEs mainly focus on theoretical contributions and leave literature with algorithms and numerical results tremendously sparse. Furthermore, HJB theory for boundary control has certain mathematical difficulties which impose limitations.

Alternative methodologies are derived using information theoretic control. The basis of a subset of these methods is a relation between \textit{Free energy} and \textit{Relative Entropy} in statistical physics given by
\begin{equation}\label{eq:Free_Energy_Relative_Entropy}
\text{Free Energy} \leq \text{Work} - \text{Temperature} \times \text{Entropy} 
\end{equation}
This inequality is an instantiation of the second law in stochastic thermodynamics: Increase in \textit{Entropy} results in minimizing the right hand side of the expression. In finite dimensions, connections between \cref{eq:Free_Energy_Relative_Entropy} and Dynamic Programming motivate these methods. Essentially, there exist two different points of view on decision making under uncertainty that overlap for fairly general classes of stochastic systems, as depicted in \cref{Fig:Connections}. 

These connections are extended to infinite-dimensional spaces \cite{theodorou2018linearly} (see also the Supplemental Material \cref{supsec:Connections_DP}) and are leveraged in this letter to develop practical algorithms for distributed and boundary control of stochastic fields. Specifically, we develop a generic framework for control of  stochastic fields that are  modeled  as semi-linear SPDEs. We show that optimal control of SPDEs can be casted as a variational optimization problem and then solved using sampling of infinite dimensional diffusion processes. The resulting variational optimization algorithm can be used in either fixed or receding time horizon formats for distributed and boundary control of semilinear SPDEs and utilizes adaptive importance sampling of stochastic fields. The derivation relies on a non-trivial generalization of stochastic calculus to arbitrary Hilbert spaces and has broad applicability.

\begin{figure}[t!]
  \begin{center}
    \includegraphics[width=0.48\textwidth]{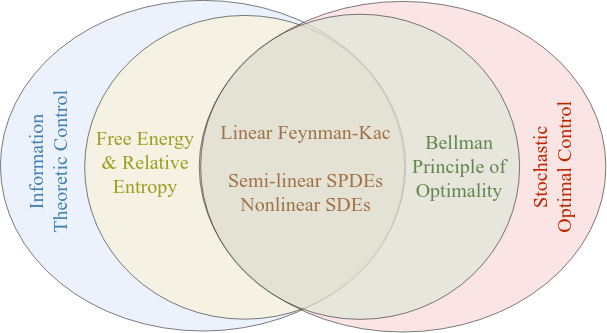}
  \end{center}
  \vspace{-10pt}
  \captionof{figure}{\label{Fig:Connections}Connection between the free energy-relative entropy approach and stochastic Bellman Principle of Optimality. }
\end{figure}

This manuscript presents an open loop and model predictive control methodology for control of SPDEs related to fluid dynamics which are grounded on the theory of stochastic calculus in function spaces, which is not restricted to any particular finite representation of the original system. The control updates are independent of the method used to numerically simulate the SPDEs, which allows the most suitable problem dependent numerical scheme (e.g., finite differences, Galerkin methods, finite elements, etc.) to be employed. 

Furthermore, deriving the variational optimization approach for optimal control entirely in Hilbert spaces overcomes  numerical issues, including matrix singularities and SPDE space-time noise degeneracies that typically arise in finite dimensional representations of SPDEs. Thus, the work in this letter is a generalization of information theoretic control methods in finite dimensions \cite{todorov2009efficient,TheodorouCDC2012,entropy_2015,Kappen2005b} to infinite dimensions and inherits crucial characteristics from its finite dimensional counterparts. 
%the ability to deal with non-quadratic cost functions and nonlinear dynamics.

However, the primary benefit of the information theoretic approach presented in this work is that the stochasticity inherent in the system can be \textit{leveraged} for control. Namely, The inherent system stochasticity is utilized for exploration in the space of trajectories of SPDEs in Hilbert spaces, which provide a Newton-type parameter update on the parametrized control policy, as shown in \cref{fig:architecture}. Importance sampling techniques are incorporated to iteratively guide the sampling distribution, and result in a mathematically consistent and numerically realizable sampling-based algorithm for distributed and boundary control of semi-linear SPDEs.

\section{Preliminaries and Problem Formulation}

%We would like to emphasize here that the prior work on stochastic control of SDEs using the exponential transformation of the value function can be split into discrete \cite{todorov2009efficient} and continuous time  formulations \cite{Kappen2005b}. The information theoretic approach to stochastic control was  introduced in Reference \cite{DaiPra1996}  and then further explored in terms of algorithms in robotics, autonomy and control in references \cite{Grady_MPPI2015,Grady_MPPI2016,Grady_MPPI2018,Grady_ICRA_17,williams2017model,TRO_GRady_2018,entropy_2015,TheodorouCDC2012}.  In this article, we show that from all these different approaches to stochastic control under the value function exponential transformation, the information theoretic approach generalizes in a very elegant fashion because of its pure measure-theoretic nature. This allows us to derive sampling-based controllers without relying on model reduction techniques; an approach that is typically used for computational control of fields (\textbf{cite}, \textbf{also use arguments from rebattle phase of NIPS}). 
%%%%%%%%%%%%%%%%%%%%%%%%%%TABLE%%%%%%%%%%%%%%%%%%%%%%
\begin{table*}[t!]
    \captionof{table}{\label{tab:semilinear_pdes} Examples of commonly known semi-linear PDEs in a \textit{fields representation} with subscript $x$ representing partial derivative with respect to spatial dimensions and subscript $t$ representing partial derivatives with respect to time. The associated operators $\A$ and $F(t,X)$ in the Hilbert space formulation are colored blue and violet, respectively, and specified in separate columns for clarity.}
    \begin{ruledtabular}
    \begin{tabular}{  l  l  l  l  l }
        \textbf{Equation Name} & \textbf{Partial Differential Equation}
         & \textbf{Linear Op.} $\A $  & \textbf{Non-linear Op.} $F(t,X)$  & \textbf{Field State} \\ 
        \hline
        Nagumo & $u_t = {\color{blue}\epsilon u_{xx}} + {\color{violet} u(1-u)(u-\alpha)}$ & $u_{xx}$ & $u(1-u)(u-\alpha)$ & Voltage\\

        Heat & $u_t = {\color{blue}\epsilon u_{xx}}$ & $u_{xx}$ &  & Heat/temperature\\  
        
        Burgers (viscous) & $u_t = {\color{blue}\epsilon u_{xx}} - {\color{violet}u u_x}$ & $u_{xx}$ & $uu_x$ & Velocity\\
        
        Allen-Cahn & $u_t = {\color{blue}\epsilon u_{xx} } + {\color{blue}u} - {\color{violet} u^3}$ & $u_{xx}+u$ & $u^3$ & Phase of a material \\

        Navier-Stokes & $u_t = {\color{blue}\epsilon \Delta u} - \nabla p - {\color{violet}(u \cdot \nabla)u}$ & $\Delta u$ & $(u \cdot \nabla)u$ & Velocity\\        

        Nonlinear Schrodinger & $u_t = {\color{blue}\frac{1}{2} i u_{xx}} + {\color{violet}i |u|^2 u=0}$ & $i u_{xx}$ & $i |u|^2 u$ &  Wavefunction\\

        Korteweg-de Vries & $u_t = - {\color{blue} u_{xxxx}} - {\color{violet} 6uu_x}$ & $u_{xxxx}$ & $uu_x$ & Plasma wave \\

        Kuramoto-Sivashinsky & $u_t = - {\color{blue}  u_{xxxx}} - {\color{blue}  u_{xx}} - {\color{violet}uu_x} $ & $u_{xx} + u_{xxxx}$ & $uu_x$ & Flame front \\
    \end{tabular}
    \end{ruledtabular}
\end{table*}
%%%%%%%%%%%%%%%%%%%%%%TABLE%%%%%%%%%%%%%%%%%%%%%%

At the core of our method are comparisons between sampled stochastic paths used to perform Newton-type control updates, as depicted in \cref{fig:architecture}. Let, $H$, $U$ be separable Hilbert spaces with inner products $\langle \cdot, \cdot \rangle_H$ and $\langle \cdot, \cdot \rangle_U$ resp,, $\sigma$-fields $\calB(H)$ and $\calB(U)$ resp. and  probability space $(\Omega, \F, \Pb) $ with filtration $\F_{t},$ $t\in [0, T]$. Consider the controlled and uncontrolled infinite-dimensional stochastic systems of the form
\begin{align}
\rd X &= \A X \rd t + F(t, X) \rd t +   \frac{1}{\sqrt{\rho}} G(t, X) \rd W(t), \label{eq:SPDEs_NoControl}\\
\rd X &= \A X\rd t + F(t, X )\rd t \nonumber\\
&\quad+ G(t, X)\bigg(\calU^{(i)}(t, X; \vtheta)\rd t+ \frac{1}{\sqrt{\rho}} \rd  W(t)\bigg), \label{eq:SPDEs_Control}
\end{align}
\noindent where  $X(0) $ is an  $\F_{0}-$measurable, $H-$valued random variable, and $\A : D(\A ) \subset H \to H $ is a linear operator, where $D(\A )$ denotes here the domain of $\A$. $F: H \to H$ and $ G : U \to H  $ are nonlinear operators that satisfy properly formulated Lipschitz conditions associated with the existence and uniqueness of solutions to \cref{eq:SPDEs_NoControl} as described in \cite[Theorem 7.2]{da1992stochastic}. The term $\calU^{(i)}(t,X; \vtheta)$ is a control operator on Hilbert space $H$ parameterized by a finite set of decision variables $\vtheta$.
 
The term  $ W(t)\in U $ corresponds to a \textit{Hilbert space Wiener process}, which is a generalization of the Wiener process in finite dimensions. When this noise profile is spatially uncorrelated, we call it a {\it cylindrical Wiener process}, which requires the added assumptions on $\A$ in \cite[Hypthesis 7.2]{da1992stochastic} in order to form a contractive, unitary, linear semigroup, which is required to guarantee existence and uniqueness of $\calF_t$-adapted weak solutions $X(t), t \geq 0$. A thorough description of the Wiener process in Hilbert spaces, along with its various forms can be found in the Supplemental Material \cref{supsec:wiener}.
 
In what follows, $\langle\cdot,\cdot\rangle_{S}$ denotes the inner product in a Hilbert space $S$ and $C([0,T];H)$ denotes the space of continuous processes in $H$ for $t\in [0,T]$. 
% Define a new process $\hat{W})(t) = W(t) - \int_0^t G^{-1}(t,X)\tilde{B}(t,X)$, and 
Define the measure on the path space of uncontrolled trajectories produced by \cref{eq:SPDEs_NoControl} as $\calL$ and define the measure on the path space of controlled trajectories produced by \cref{eq:SPDEs_Control} as $\calL^{(i)}$. The notation $\Eb_\calL$ denotes expectations over paths as Feynman path integrals.

% The connection between  \textit{U-valued $Q$}-Wiener processes and \textit{real-valued} brownian motion goes in  both ways. In particular, starting from a collection of independent  brownian motions  $  \{\beta_{j}\}_{i=1}^{\infty} $, and $ (Q, {e_{i},\lambda_{i}}) $  one can construct a  $\calQ$-Wiener processes  $ W(t) $  using  \eqref{eq:Q_Wiener}. In addition, given a $Q$-Wiener process and the corresponding eigenvalue-eigenvector pairs $({e_{i},\lambda_{i}})$ one can reconstruct independent brownian motions using \eqref{eq:Brownian_Motion}.

Many physical and engineering systems can be written in the abstract form of \cref{eq:SPDEs_NoControl} by properly defining operators $\A$, $F$ and $G$ along with their corresponding domains. Examples can be found in our simulated experiments, as well as \cref{tab:semilinear_pdes}, with more complete descriptions in \cite[Chapter 13]{da1992stochastic}). The goal of this work is to establish control methodologies for stochastic versions of such systems.

Control tasks defined over SPDEs typically quantify task completion by a measurable functional $J:C([0,T];H) \rightarrow \Rb$ referred to as the cost functional, given by
\begin{equation}\label{eq:State_cost}
    J\big(X(\cdot,\omega)\big) =  \phi\big(X(T),T\big) + \int_{t}^{T} \ell\big(X(s),s\big) \rd s,
\end{equation}
where $X(\cdot,\omega)\in C([0,T];H)$ denotes the entire state trajectory, $\phi\big(X(T),T\big)$ is a terminal state cost and $\ell\big(X(s),s\big)$ is a state cost accumulated over the time horizon $s \in [t,T] $. With this, we define the terms of \cref{eq:Free_Energy_Relative_Entropy}. More information can be found in the Supplemental Material \cref{supsec:Free_Energy_Relative_Entropy}.

Define the \textit{Free energy} of cost function $J(X)$ with respect to uncontrolled path measure $\calL$ and temperature $\rho \in \Rb$ as \cite{theodorou2018linearly}
\begin{equation}
V(X)  := -\frac{1}{\rho} \ln \Eb_{\calL} \Big[ \exp \big(-\rho J(X) \big)  \Big].
\end{equation}
Also, the \textit{Generalized Entropy} of controlled path measure $\calL^{(i)} $ with respect uncontrolled path measure $\calL$ is defined as
\begin{equation}
     S\Big( \tilde{\calL} || \calL \Big) := \left\{
\begin{array}{l l}
  -\int_{\Omega}   \frac{    \rd \calL^{(i)} }{\rd \calL}  \ln \frac{ \rd \calL^{(i)} }{\rd \calL }         \rd \calL,
  \mbox{if $\calL^{(i)} <<\calL $},  \\
  +\infty,  \quad \mbox{otherwise}, \end{array} \right.
\end{equation}
where ``$<<$'' denotes absolute continuity \cite{theodorou2018linearly}.

The relationship between free energy and relative entropy was extended to a Hilbert space formulation in \cite{theodorou2018linearly}. Based on the free energy and generalized entropy definitions,  \cref{eq:Free_Energy_Relative_Entropy} with temperature $T = \frac{1}{\rho}$ becomes the so-called Legendre transformation, and takes the form
    \begin{align} \label{eq:Legendre}
    & - \frac{1}{\rho}   \ln \Eb_{\calL} \bigg[ \exp( -\rho {J} )  \bigg]  \leq \bigg[    \Eb_{\calL^{(i)} }\left({J} \right)  -\frac{1}{\rho} S \left( \calL^{(i)} \hspace{0.05cm} \big|\big|  \calL \right)  \bigg],
    \end{align}
with  equilibrium probability measure in the form of a Gibbs distribution
\begin{equation}\label{eq:Gibbs}
\rd \calL^{*}  = \frac{\exp( - \rho J) \rd \calL }{\int_{\Omega} \exp( - \rho J)  \rd \calL },
\end{equation}
Optimality of $\calL^*$ is verified in \cite{ theodorou2018linearly}. The statistical physics interpretation of inequality \cref{eq:Legendre} is that maximization of entropy results in reduction of the available energy. At the thermodynamic equilibrium the entropy reaches its maximum and $V  = E  - T S$.

The free energy-relative entropy relation provides an elegant methodology to derive  novel algorithms for distributed and boundary control problems of SDPEs. This relation is also significant in the context of SOC literature, wherein optimality of control solutions rely on fundamental principles of optimality such as Pontryagin Maximum Principle \cite{Pontryagin1962} or the Bellman Principle of Optimality \cite{Bellman1964}. The Supplemental Material \cref{supsec:Connections_DP} shows that by applying a properly defined Feynman-Kac argument, the free energy is equivalent to a value function that satisfies the HJB equation. This connection is valid for general probability measures, including  measures defined on path spaces induced by infinite-dimensional stochastic systems.

Our derivation is general in the context of \cite{DaPrato1999}, wherein they apply a transformation that is only possible for state-dependent cost functions. The proof given in the Supplemental Material \cref{supbsec:Feynman-Kac} is novel for a generic state and time dependent cost to the best knowledge of the authors. The observation that the Legendre transformation in \cref{eq:Legendre} is connected to optimality principles from SOC motivates the use of \cref{eq:Gibbs} for the development of stochastic control algorithms. 
  
Flexibility of this approach is apparent in the context of stochastic boundary control problems, which are theoretically more challenging due to the unbounded nature of the solutions \cite{Stability_SPDEs,fabbri}. The HJB theory for these settings is not as mature and results are restricted to simplistic cases \cite{ErgodicControl_HeatSPDE}. Nonetheless, since \cref{eq:Legendre} holds for arbitrary measures, the difficulties of related works are overcome by the proposed information theoretic approach. Hence, in either the stochastic boundary control or distributed control case the free energy represents a lower bound of a {\it state cost} plus the associated {\it control effort}. Despite losing connections to optimality principles in systems with boundary control, our strategy in both distributed and boundary control settings is to optimize the {\it distance} between our parameterized control policies and the optimal measure in \cref{eq:Gibbs}, so that the lower bound of the total cost can be approached by the controlled system. Specifically, we look for a finite set of decision variables $\vtheta^*$ that yield a Hilbert space control input $\calU(\cdot)$ that minimizes the distance to the optimal path measure
 \begin{align}
    \vtheta^{*} &=   \argmax_{\vtheta}  S\big(\calL^{*} || \calL^{(i)}  \big)  \label{eq:theta} \\
  &= \argmax_\vtheta   \bigg[ -  \int_{\Omega}   \frac{    \rd \calL^{*}  }{\rd  \calL^{(i)}  }  \ln  \frac{ \rd \calL^{*} }{\rd \calL^{(i)}  }  \rd  \calL^{(i)}   \bigg]. \label{eq:theta_expand}
\end{align}

\section{Stochastic Optimization in Hilbert Spaces}

To optimize \cref{eq:theta}, we apply the chain rule for the Radon-Nikodym derivative twice~\footnote{ While this is necessary on the right term for our control update, this is applied to the left term for importance sampling, which enhances algorithmic convergence.}, which has the form
\begin{equation}
 \frac{ \rd \calL^{*} }{\rd \calL^{(i)}  }   =  \frac{ \rd \calL^{*} }{\rd \calL }   \frac{ \rd \calL }{\rd \calL^{(i)}  }. \label{eq:RN_chain}
\end{equation}

Note that the first derivative is given by \cref{eq:Gibbs} while the second derivative is given by a change of measure between control and uncontrolled infinite dimensional stochastic dynamics. This change of measure arises from a version of Girsanov's Theorem, provided with a proof in the Supplemental Material \cref{supsec:girsanov}. Under the open-loop parameterization
\begin{equation}\label{eq:ParameterControl1}
\calU(t)(\vx) = \sum_{\ell=1}^{N} m_{\ell}(\vx) u_{\ell}(t) =\vm(\vx)^{\top} \vu(t;\vtheta),
\end{equation}
Girsanov's theorem yields the following change of measure between the two SPDEs
\begin{equation}
\label{eq:radon_param}
\begin{split}
 \frac{ \rd \calL}{ \rd \calL^{(i)}  }=\exp\hspace{-0.07cm}\bigg(\hspace{-0.1cm}-\hspace{-0.05cm}\sqrt{\rho}\int_{0}^{T}\hspace{-0.2cm}\vu(t)^{\top}\bar{ \vm}(t)\hspace{-0.02cm}+\hspace{-0.02cm}\frac{\rho}{2}\int_{0}^{T}\hspace{-0.2cm}\vu(t)^{\top}\vM\vu(t)\rd t\bigg),
\end{split}
\end{equation}
with
\begin{equation}
\label{small_m}
\bar{ \vm}(t):=\bigg[\langle m_{1},\rd W(t)\rangle_{U_{0}},...,\langle m_{N},\rd W(t)\rangle_{U_{0}}\bigg]^{\top}\in\mathbb{R}^{N},
\end{equation}
\begin{equation}
\label{big_M}
\vM\in\mathbb{R}^{N\times N},\quad (\vM)_{ij}:=\langle m_{i},m_{j}\rangle_{U},
\end{equation}
where $\mathbf{x}\in\mathbb{R}^{n}$ denotes the spatial component of the SPDEs and $ m_{\ell}\in U $ are design  functions that specify how actuation is incorporated into the infinite dimensional dynamical system. This parameterization can be used for both open loop trajectory optimization as well as for model predictive control. In our experiments we apply model predictive control through re-optimization, and turn \cref{eq:ParameterControl1} into an implicit feedback type control. Optimization  using  \cref{eq:theta} with policies that explicitly depend on the stochastic field is also possible and is considered using gradient-based optimization in   \cite{Evans2019IDVRL,evans2020spatio,evans2021stochastic}. 

To simplify the optimization in \cref{eq:theta}, we further parameterize $\vu(t;\theta)$ as a simple measurable function. In this case, the parameters $\vtheta$ consist of all step functions $\{\vu_i\}$. With this representation, we arrive at our main result--an importance sampled variational controller of the form
\begin{widetext}
\begin{align}
\vu_{j}{}^{(i+1)} &=  \vu_{j}{}^{(i)}  +   \frac{1}{\sqrt{\rho} \Delta t}   \vM^{-1}  \Eb_{\calL^{(i)}}   \bigg[   \frac{\exp( - \rho J^{(i)}) }{  \Eb_{\calL^{(i)}} [\exp( - \rho  J^{(i)}] } \int_{t_j}^{t_{j+1}}\bar{ \vm}^{(i)}(t)    \bigg], \label{eq:Iterative_optimalvariation1} \\
\text{where } J^{(i)} &:= J + \frac{1}{\sqrt{\rho}}\sum_{j=1}^{L}\vu_j^{(i)\top}\int_{t_j}^{t_{j+1}}\bar{ \vm}^{(i)}(t) + \frac{\Delta t}{2}\sum_{j=1}^{L}\vu_j^{(i)\top}\vM\vu_j^{(i)},\label{eq:J_i}
\end{align}
\end{widetext}
where $\bar{\vm}^{(i)}(t)$ is defined similar to \cref{small_m}, but with the Wiener process $W^{(i)}(t) := W(t) - \sqrt{\rho} \int_0^t  \calU^{(i)}(s) \rd s$.
%with the control path dependent  function   $ \zeta^{(i)} = \zeta(\calU^{(i)}): [0,T] \times \calO \to \Rb  $ defined as

The intermediate steps of \cref{eq:Iterative_optimalvariation1} can be found in the Supplemental Material \cref{supsec:derivation}. For the purposes of implementation, we perform the approximation
% we use the following decomposition of cylindrical Wiener processes:
%  \begin{equation}\label{eq:Q_Wiener}
%     W(t) = \sum_{j=1}^{\infty} \sqrt{\lambda_{j}} \beta_{j}(t) e_{j},
%  \end{equation}
% where  $ \{\beta_{j}(t)\}  $  are real-valued Brownian motions that are mutually independent on $ (\Omega, \F, P)$ and $\{e_j\}$ form a complete orthonormal system in $U$. With this, we perform the approximation:
 \begin{equation}\label{eq:optimalvariation_1}
\int_{t_j}^{t_{j+1}}\langle m_{l},\rd W(t)\rangle_{U_{0}}  \approx  \sum_{s=1}^{R}\langle m_{l}, e_{s}\rangle_{U}\Delta\beta^{(i)}_{s}(t_j),
 \end{equation}
where $\Delta\beta^{(i)}_{s}(t_j)$ are Brownian motions sampled from the zero-mean Gaussian distribution $\Delta\beta^{(i)}_{s}(t_j)\sim\mathcal{N}(0, \Delta t)$, and $\{e_j\}$ form a complete orthonormal system in $U$. This is based on truncation of the cylindrical Wiener noise expansion
 \begin{equation}\label{eq:Q_Wiener}
    W(t) = \sum_{j=1}^{\infty} \beta_{j}(t) e_{j}.
 \end{equation}

We note that the control of SPDEs with cylindrical Wiener noise, as above, can be extended to the case in \cite{StochasticBurgers_1999}, in which $G(t,X)$ is treated as a trace-class covariance operator $\sqrt{Q}$ of a $Q$-Wiener process $\rd W_Q(t)$. See the Supplemental Material \cref{supsec:Q_Wiener_derivation} for more details. The resulting iterative control policy is identical to \cref{eq:Iterative_optimalvariation1} derived above.

%%%%%%%%%%%%%%%%%%SECTION CHANGE%%%%%%%%%%%%%%%%%%%%%%%%%
% \section{Discussion}
%  \label{sec:discussion} 
%%%%%%%%%%%%%%%%%%%%%%%%%%%%%%%%%%%%%%%%%%%%%%%%%%%%%%%%%
\section{Comparisons to Finite-Dimensional Optimization}

In light of recent work that apply finite dimensional control after reducing the SPDE model to a set of SDEs or ODEs, we highlight critical advantages of optimizing in Hilbert spaces before discretizating. The main challenge with performing optimization based control after discretization is that SPDEs typically reduce to degenerate diffusion process for which importance sampling schemes are difficult. Consider the finite dimensional SDE representation of \cref{eq:SPDEs_NoControl}
\begin{equation}\label{eq:Finite_DimensionalSDE}
\begin{split}
\rd \hat{X} &= \mathcal{A} \hat{X} \rd t + \mathcal{F}(t,\hat{X})\rd t \\
&\quad + \mathcal{G}(t,\hat{X})\bigg( \mathcal{M} \vu(t;\vtheta) \rd t + \frac{1}{\sqrt{\rho}} \mathcal{R}\rd \vbeta(t) \bigg),
\end{split}
\end{equation}
where  $ \hat{X} \in \mathbb{R}^{n} $ is an n-dimensional vector comprising of the values of the stochastic field at particular basis elements. The terms $\mathcal{A}$, $\mathcal{F}$, and $\mathcal{G}$ are matrices associated with their respective Hilbert space operators. The matrix $\mathcal{M} \in \mathbb{R}^{n \times k}$, where $k $ is the number of actuators placed in the field. The vector $\rd \vbeta \in \mathbb{R}^m$ collects noise terms and $\mathcal{R}$ collects associated finite dimensional basis vectors of \cref{eq:Q_Wiener}. The matrix $\mathcal{R}\in \Rb^{n \times m}$ is composed of $n$ rows, which is the number of basis elements used to spatially discretize the SPDE \cref{eq:SPDEs_NoControl}, and $m$ columns, which is the number of expansion terms of \cref{eq:Q_Wiener} that are used.

Girsanov's theorem for SDEs of the form \cref{eq:Finite_DimensionalSDE} requires the matrix $\calR$ to be invertible, as seen in the resulting change of measure
\begin{equation}
\begin{split}
    \frac{\rd \calL}{\rd \calL^{(i)}} &= \exp\bigg( -\sqrt{\rho} \int_0^T \big\langle \calR^{-1} \calM \vu(s,\vtheta), \rd W(s) \big \rangle_U \\
    &\;\qquad + \frac{\rho}{2} \int_0^T \big\langle \calR^{-1}\calM \vu(s,\vtheta), \calR^{-1}\calM \vu(s,\vtheta) \big\rangle_U \rd s \bigg)
\end{split}
\end{equation}
Deriving the optimal control in the finite dimensional space requires that a)  the noise term is expanded to at least as many terms as the points on the spatial discretization $n \leq m $, and b)  the resulting diffusion  matrix  $ \mathcal{R} $ in \cref{eq:Finite_DimensionalSDE} is full rank. Therefore, increasing finite dimensional approximation accuracy increases the complexity of the sampling process and optimal control computation. This is even more challenging in the case of SPDEs with $Q$-Wiener noise, where many of the eigenvalues in the expansion of $W(t)$ must be arbitrarily close to zero.

Other finite dimensional approaches as in \cite{Kappen2016} utilize Gaussian density functions instead of the measure theoretic approach. These approaches are not possible firstly due to the need to define the Gaussian density with respect to a measure other than the Lebesgue measure, which does not exist in infinite dimensions. Secondly, an equivalent Euler-Maruyama time-discretization is not possible without first discretizing spatially. Finally, after spatial discretization, the use of transition probabilities based on density functions requires invertibility of $\mathcal{R} \mathcal{R}^{\rT}$ (see Supplemental Material \cref{supsec:Comparison_finite}). These characteristics make Gaussian density based approaches not suitable for deriving optimal control of SPDEs.

%%%%%%%%%%%%%%%%%%%%SECTION CHANGE%%%%%%%%%%%%%%%%%%%
% \section{Numerical Results} \label{sec:Experiments}
%%%%%%%%%%%%%%%%%%%%%%%%%%%%%%%%%%%%%%%%%%%%%%%%%%%%%
\section{Numerical Results}

%%%%%%%%%%%%%%%%%%%%%%% Burgers%%%% %%%%%%%%%%%%%%%%%%%%%%%%%%%%%%%%%%%%%%%%%%%%%%%%%%%%%%%%%%%%%%%%%%%

%%%%%%%%%%%%FIGURE%%%%%%%%%%%
\begin{figure}[t!]
    \hspace{-0.175cm}
    {\includegraphics[width=\columnwidth]{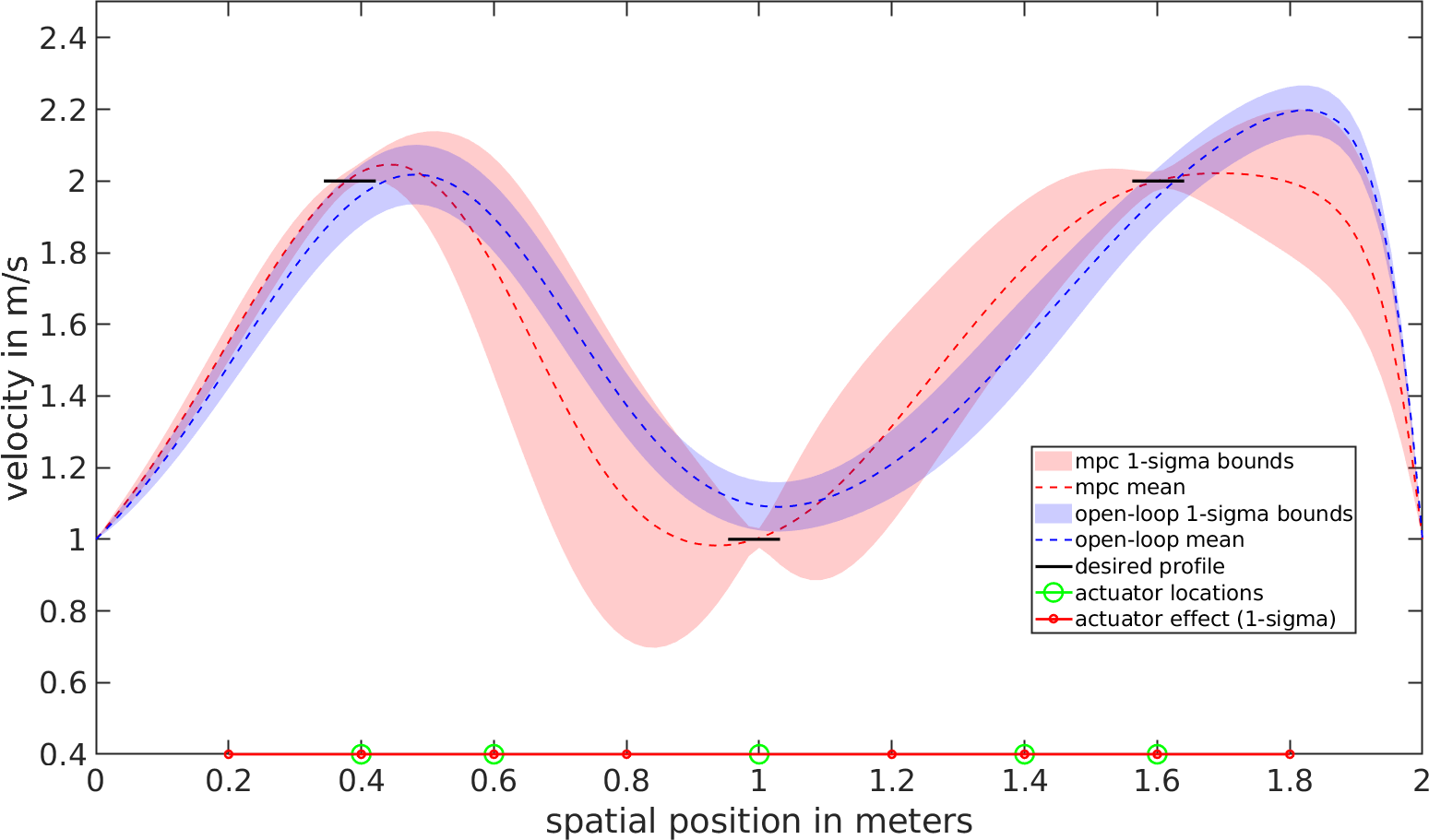}}
    
    {\includegraphics[width=0.49\columnwidth]{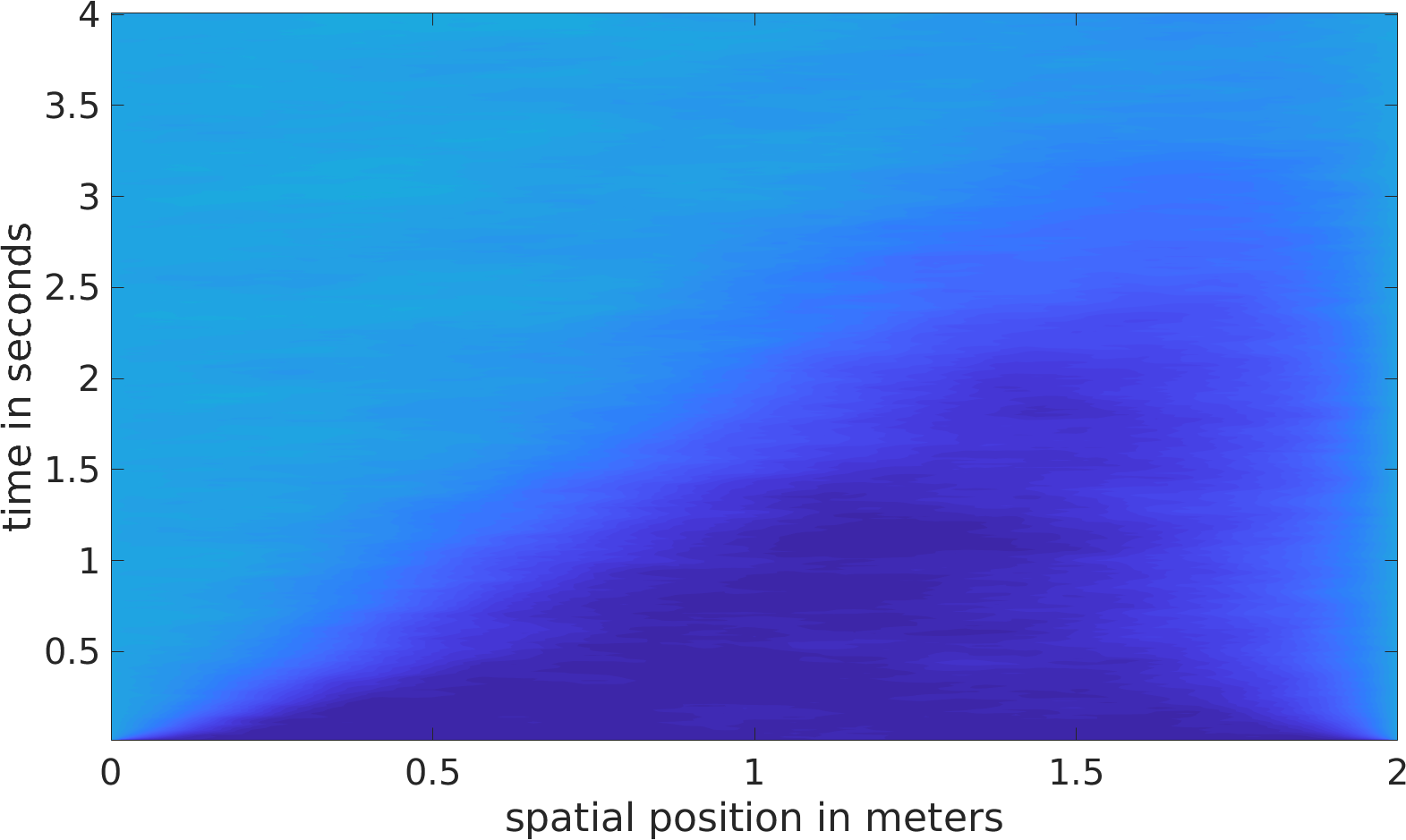}}
    \;\hspace{-0.2cm}
    % \caption{Uncontrolled dynamics}
    {\includegraphics[width=0.5\columnwidth]{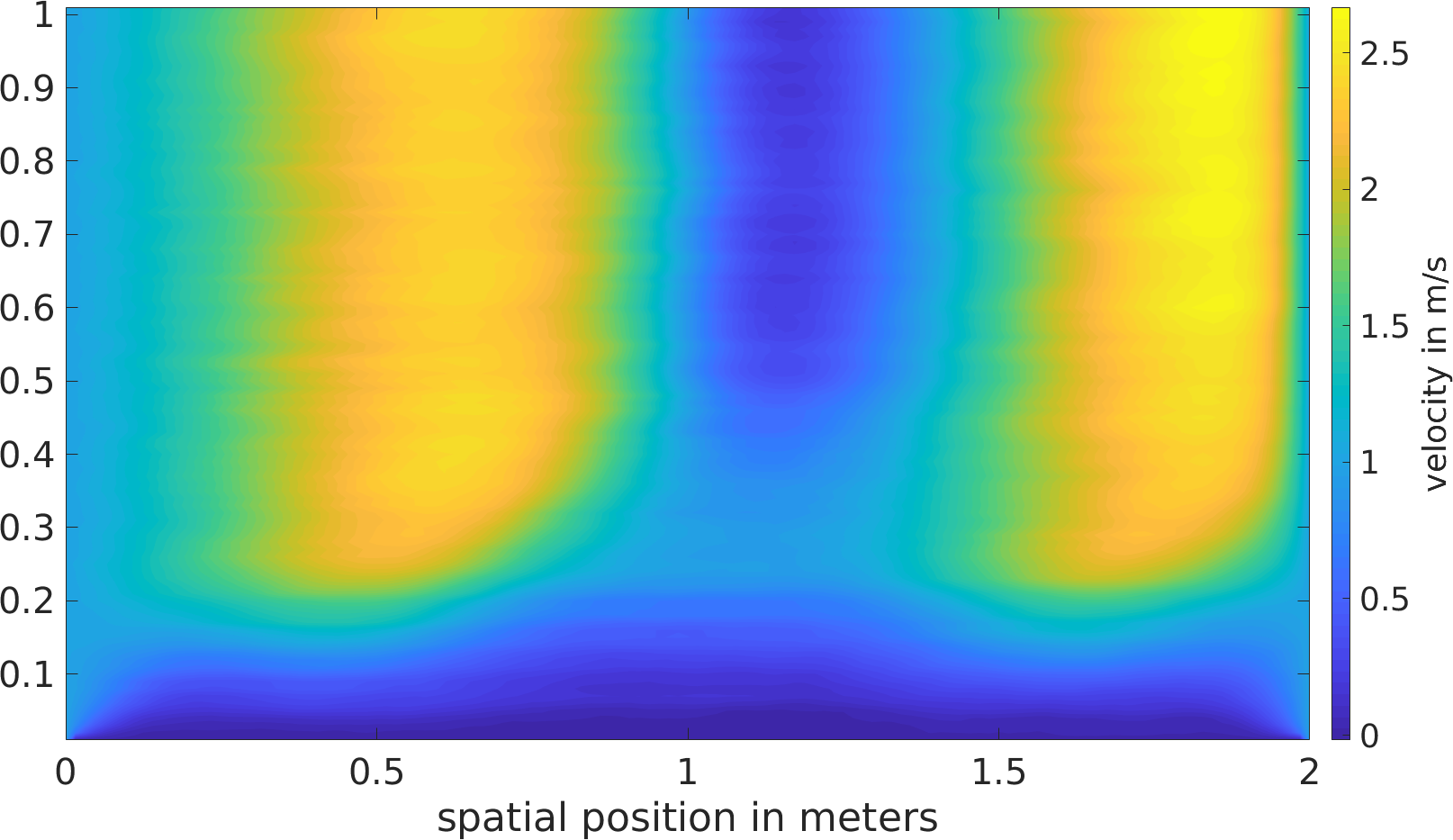}}
    % \caption{MPC to track desired profile}
\vspace{-0.5cm}
\caption{Infinite dimensional control of the 1-D Burgers SPDE: (top) Velocity profiles averaged over the \nth{2}-half of each time horizon over 128 trials. (bottom left) Spatio-temporal evolution of the uncontrolled 1-D Burgers SPDE with Cylindrical Wiener process noise. (bottom right) Spatio-temporal evolution of 1-D Burgers SPDE using MPC.}
\label{fig:Stochastic_Burgers}
\end{figure}
%%%%%%%%%%%%%%%%%%%%%%%%%%%%%

Performing variational optimization in the infinite dimensional space enables a general framework for controlling general classes of stochastic fields. It also comes with algorithmic benefits from importance sampling and can be applied in either open loop or MPC mode for both boundary and distributed control systems. Critically, it avoids feasibility issues in optimizing finite dimensional representations of SPDEs. Additional flexibility arises from the freedom to choose the model reduction method that is best suited for the problem without having to change the control update law. Details on the algorithm and more details on each simulated experiment can be found in the Supplemental Material \cref{supsec:results}.

\subsection{Distributed Control of Stochastic PDEs in Fluid Physics}

Several simulated experiments were conducted to investigate the efficacy of the proposed control approach. The first explores control of the 1-D stochastic viscous Burgers equation with non-homogeneous Dirichlet boundary conditions. This advection-diffusion equation with random forcing has been studied as a simple model for turbulence~\cite{da1994stochastic,jeng1969forced}.

The control objective in this experiment is to reach and maintain a desired velocity at specific locations along the spatial domain, depicted in black. In order to achieve the task, the controller must overcome the uncontrolled spatio-temporal evolution governed by an advective and diffusive nature, which produces an apparent velocity wave front that builds across the domain, as depicted on the bottom left of \cref{fig:Stochastic_Burgers}.

Both open-loop and MPC versions of the control in \cref{eq:Iterative_optimalvariation1} were tested on the 1-D stochastic Burgers equation and the results are depicted in the top subfigure of \cref{fig:Stochastic_Burgers}. Their performances are compared by averaging the voltage profiles for the \nth{2}-half of each experiment and repeated over 128 trials. The simulated experiment duration was 1.0 seconds. For the open-loop scheme, 100 optimization iterations with 100 sampled trajectory rollouts per iteration were used. In the MPC setting, 10 optimization iterations were performed at each time step, each using 100 sampled trajectory rollouts.

The results suggest that both the open-loop and MPC schemes have comparable success in controlling the Stochastic Burgers SPDE. The open-loop setting depicts the apparent rightward wavefront that is not as strong in the MPC setting. There is also quite a substantial difference in variance over the trajectory rollouts. The open-loop setting depicts a smaller variance overall, while the MPC setting depicts a variance that shrinks around the objective regions. The MPC performance is desirable since the performance metric only considers the objective regions. The Root Mean Squared Error (RMSE) and variance averaged over the desired regions is provided in \cref{table:Burgers}.

% \begin{table}[h!]
% \centering
%  \begin{tabular}{|| c | c | c ||} 
%  \hline
%   \textbf{Controller} & \textbf{MPC} & \textbf{Open-loop} \\ 
%  \hline
%  \textbf{RMSE} & \textcolor{red}{0.0231} & 0.0834 \\
%  \hline
%   \textbf{Avg. $\sigma$} & \textcolor{red}{0.0471} & 0.0780 \\
%   \hline
%  \end{tabular}
% % \caption{Summary of Monte Carlo trials for Burgers eqn}
% \label{table:Burgers}
% \end{table}

\begin{table}[H]
 \caption{\label{table:Burgers}Summary of Monte Carlo trials for the Stochastic Viscous Burgers Equation}
\begin{ruledtabular}
 \begin{tabular}{ c  c  c  c  c  c  c } 
  & \multicolumn{3}{c}{\textbf{RMSE}} & \multicolumn{3}{c}{\textbf{Average $\sigma$}} \\ 

 \textbf{Targets} & left & center & right & left & center & right \\   
 \hline
 \textbf{MPC} & \textcolor{red}{0.0344} & \textcolor{red}{0.0156} & \textcolor{red}{0.0132} & \textcolor{red}{0.0309} & 0.0718 & \textcolor{red}{0.0386} \\

  \textbf{Open-loop} & 0.0820 & 0.1006 & 0.0632 & 0.0846 & \textcolor{red}{0.0696} & 0.0797 \\

 \end{tabular}
 \end{ruledtabular}

\end{table}

%%%%%%%%%%%%%%%%%%%%%%%%%%%%%%%%%%%%%%%%%%%%%%%%%%%%%%%%%%%%%%%%%%%%%%%%%%%%%%%%%%%%%%%%%%%%%%%%%%%%

%%%%%%%%%%%%%%%%%%%%%%% Nagumo %%%% %%%%%%%%%%%%%%%%%%%%%%%%%%%%%%%%%%%%%%%%%%%%%%%%%%%%%%%%%%%%%%%%%%%

%%%%%%%%%%%%FIGURE%%%%%%%%%%%
\begin{figure*}[ht!] 

    {\hspace{-0.0cm}\includegraphics[width=0.492\textwidth]{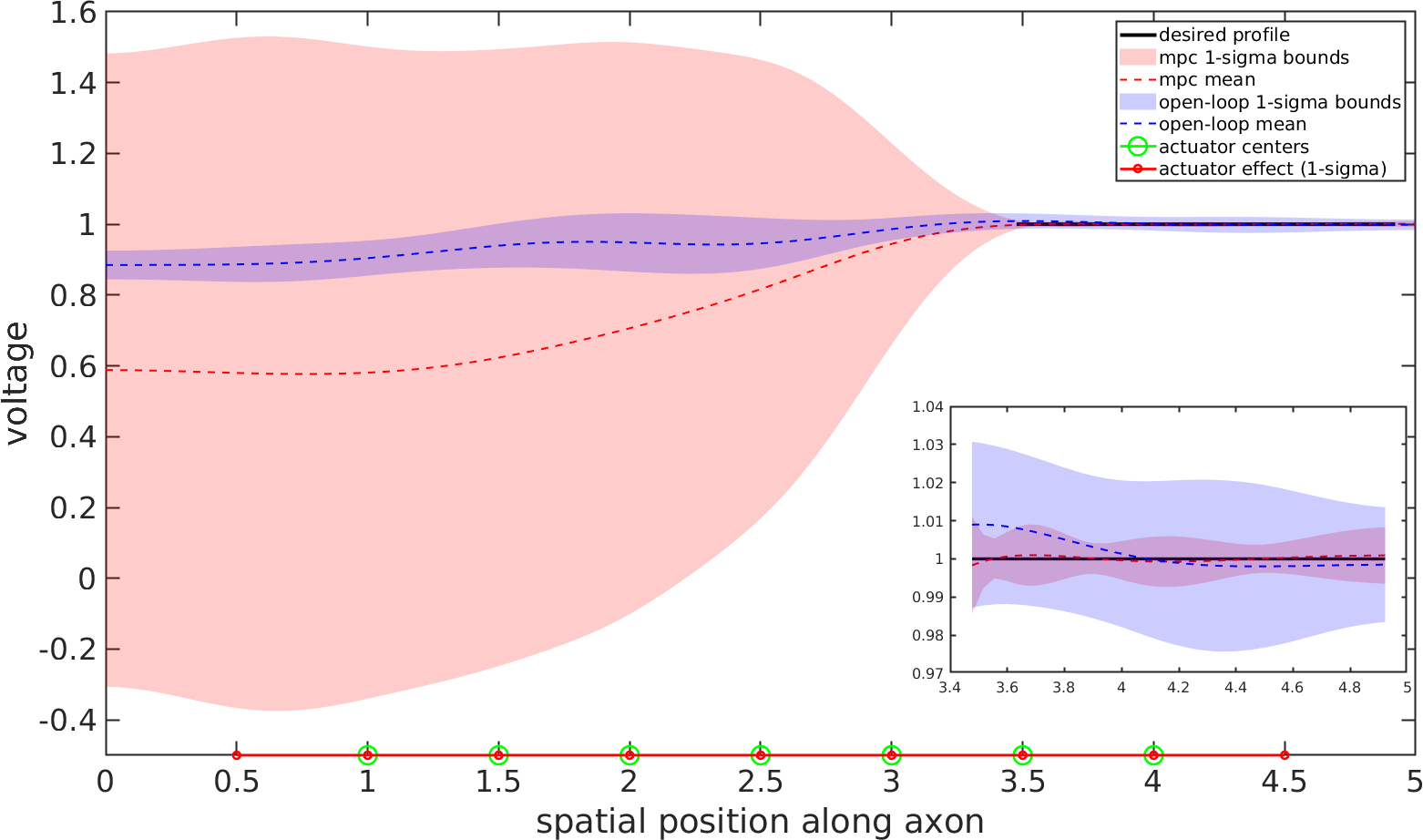}}
    % \small{Nagumo SPDE (Acceleration task)}
    \;\hspace{-0.0cm}
    {\includegraphics[width=0.492\textwidth]{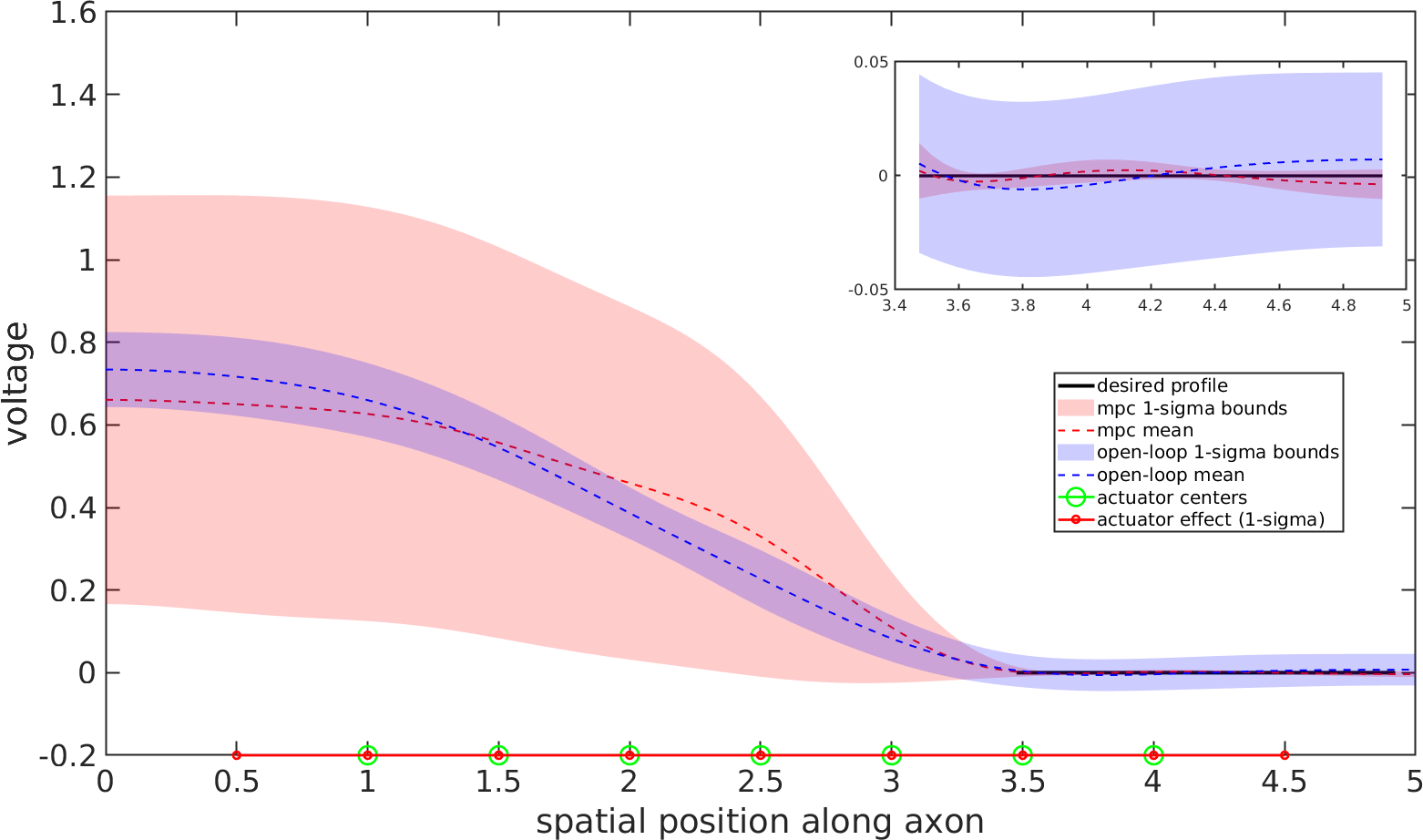}}
    % \small{Nagumo SPDE (Suppression task)}
    
    \hspace{-0.175cm}
    {\includegraphics[width=0.33\textwidth]{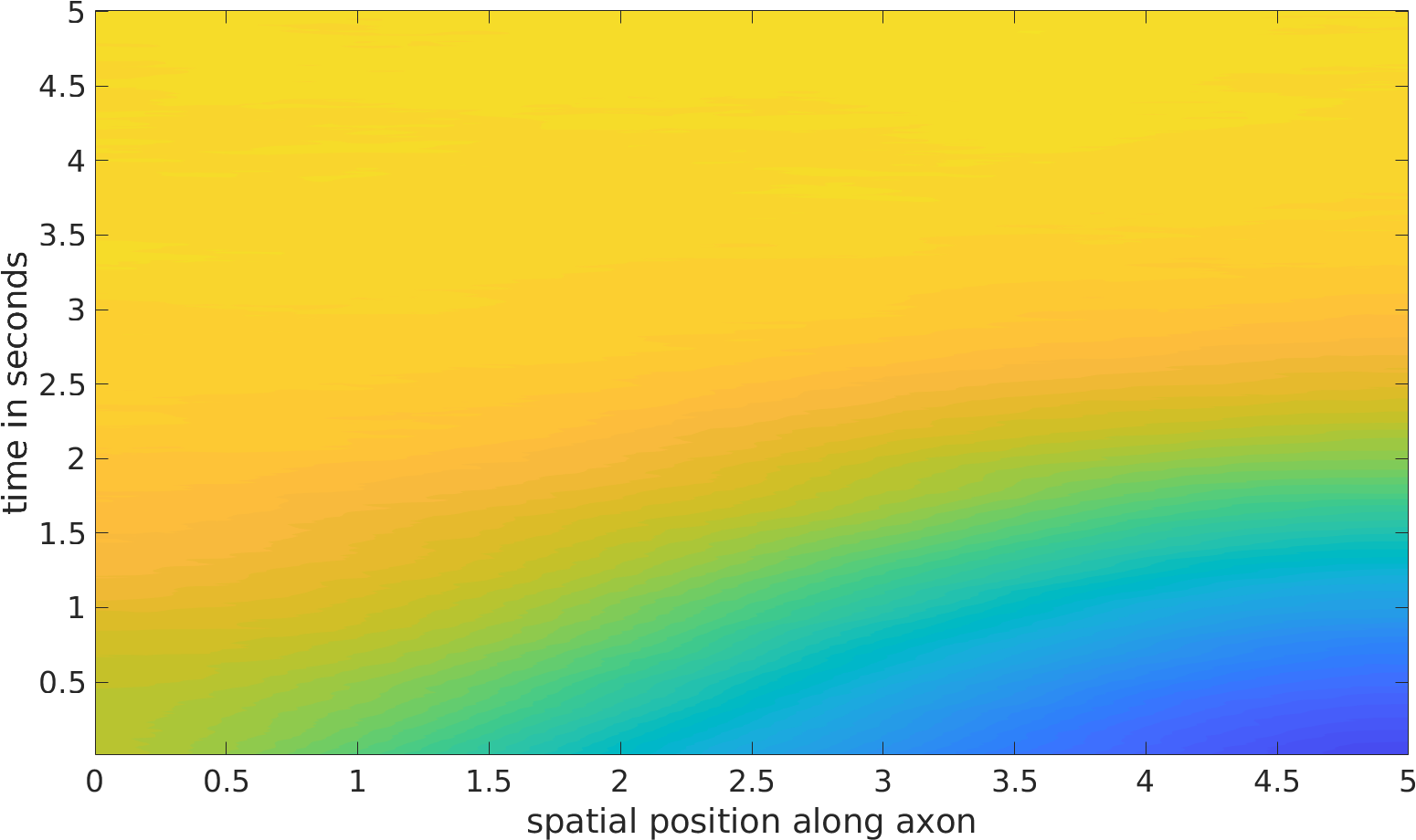}}
    % \small{(a) Uncontrolled Process}
    \;\hspace{-0.2cm}
        % \begingroup \captionof{subfigure}{} \endgroup
    % \;\hspace{-0.7cm}
    {\includegraphics[width=0.32\textwidth]{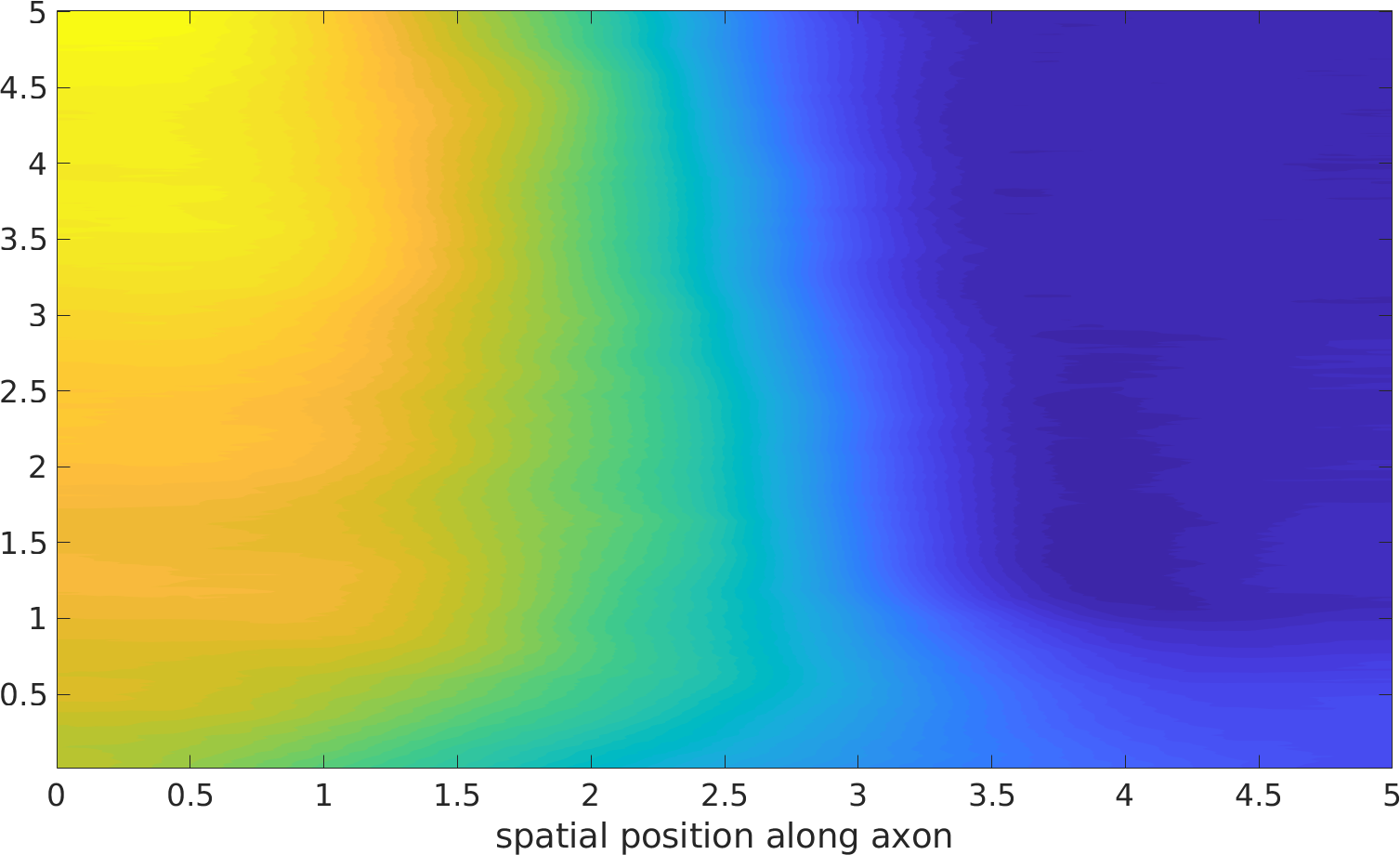}}
    % \small{(b) Suppression Task with MPC}
        %  \begingroup \captionof{subfigure}{b} \endgroup
    % \newline \centering
    \;\hspace{-0.2cm}
    {\includegraphics[width=0.335\textwidth]{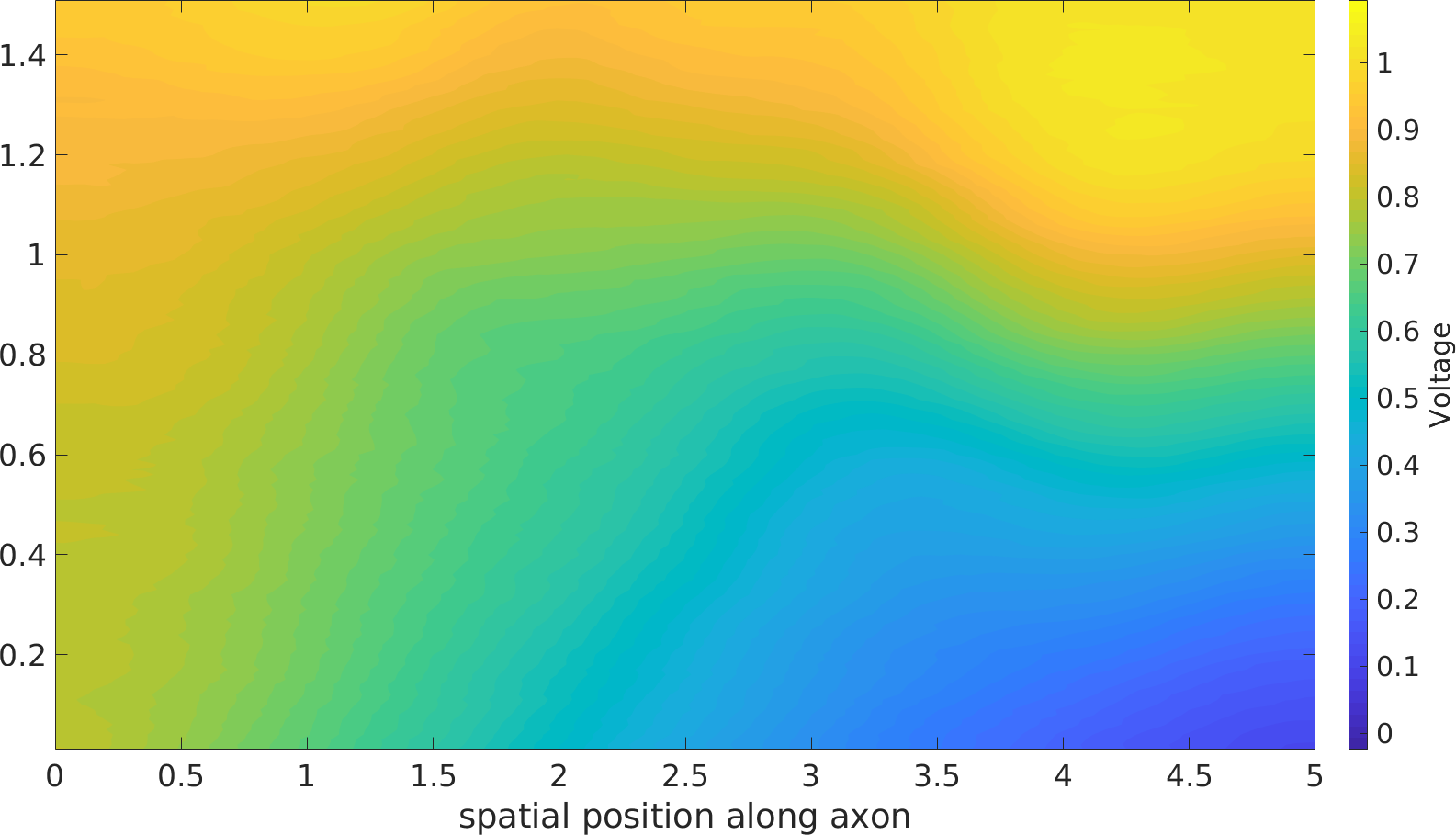}\vspace{0.25cm}}
    % \small{(c) Acceleration Task with MPC}
    
\caption{Infinite dimensional control of the Nagumo SPDE: (top left) and (top right) are voltage profiles averaged over the \nth{2}-half of each time horizon over 128 trials, (bottom left) uncontrolled spatio-temporal evolution for 5.0 seconds, (bottom center) suppressed activity with MPC for 5.0 seconds, and (bottom right) accelerated activity with MPC within 1.5 seconds.}
\label{fig:Stochastic_Nagumo}
\end{figure*}  
%%%%%%%%%%%%%%%%%%%%%%%%%%%%%

The stochastic Nagumo equation with homogeneous Neumann boundary conditions is a reduced model for wave propagation of the voltage in the axon of a neuron~\cite{lord_powell_shardlow_2014}. This SPDE shares a linear diffusion term with the Viscous Burgers equation, as depicted in \cref{tab:semilinear_pdes}. However, as shown in  the bottom left subfigure of \cref{fig:Stochastic_Nagumo}, the nonlinearity produces a substantially different behavior, which propagates the voltage across the axon with our simulation parameters in about 5 seconds. This set of simulated experiments explores two tasks: accelerating the rate at which the voltage propagates across the axon, and suppressing the voltage propagation across the axon. This is analogous to either ensuring the activation of a neuronal signal, or ensuring the neuron remains inactivated.

These tasks are accomplished by reaching either a desired value of 1.0 or 0.0 over the right end of the spatial region for acceleration and suppression, respectively. In both experiments, open-loop and MPC versions of \cref{eq:Iterative_optimalvariation1} were tested, and the results are depicted in \cref{fig:Stochastic_Nagumo}. For the open-loop scheme, 200 optimization iterations with 200 sampled trajectory rollouts per iteration were used. In the MPC setting, 10 optimization iterations were performed at each time step, each using 100 sampled trajectory rollouts. State trajectories of both control schemes were compared by averaging the voltage profiles for \nth{2}-half of each time horizon and repeated over 128 trials.

The results of the two stochastic Nagumo equation tasks suggest that both control schemes achieve success on both the acceleration and suppression tasks. While the performance appears substantially different outside the target region, the two control schemes have very similar performance on the desired region, which is the only penalized region in the optimization objective. In the top subfigures of \cref{fig:Stochastic_Nagumo}, the desired region is zoomed in on. The zoomed in views depict a higher variance in the state trajectories of the open-loop control scheme than the MPC scheme. 

As in the stochastic viscous Burgers experiment, there is an apparent trade-off between the two control schemes. The MPC scheme yields a desirable lower variance in the region that is being considered for optimization, but produces state trajectories with very high variance outside the goal region. The open loop control is understood as seeking to achieve the task by reaching low variance trajectories everywhere, while the MPC scheme is understood as acting reactively (i.e. re-optimizes based on state measurements) to a propagating voltage signal. The RMSE and variance averaged over the desired region of 128 trials of each experiment is given in \cref{table:Nagumo}.

\begin{table}[H]
\captionof{table}{\label{table:Nagumo} Summary of Monte Carlo trials for Nagumo acceleration and suppression tasks}
\begin{ruledtabular}
 \begin{tabular}{ c  c  c  c  c } 
  \textbf{Task} & \multicolumn{2}{c}{\textbf{Acceleration}} & \multicolumn{2}{c}{\textbf{Suppression}} \\ 
\textbf{Paradigm} & MPC & Open-Loop & MPC & Open-Loop \\
 \hline
 \textbf{RMSE} & \textcolor{red}{6.605$e^{-4}$} & 0.0042 & \textcolor{red}{0.0021} & 0.0048 \\
 
  \textbf{Avg. $\sigma$} & \textcolor{red}{0.0059} & 0.0197  & \textcolor{red}{0.0046}  & 0.0389 \\
 \end{tabular}
\end{ruledtabular}
% \caption{Summary of Monte Carlo trials for Nagumo tasks}
\end{table}

%%%%%%%%%%%%%%%%%%%%%%% Heat %%%%%%%%%%%%%%%%%%%%%%%%%%%%%%%%%%%%%%%%%%%%%%%%%%%%%%%%%%%%%%%%%%%%%%%

%%%%%%%%%%%%FIGURE%%%%%%%%%%%
\begin{figure*}[t!]
    \hspace{-0.35cm}
        \includegraphics[width=0.25\textwidth]{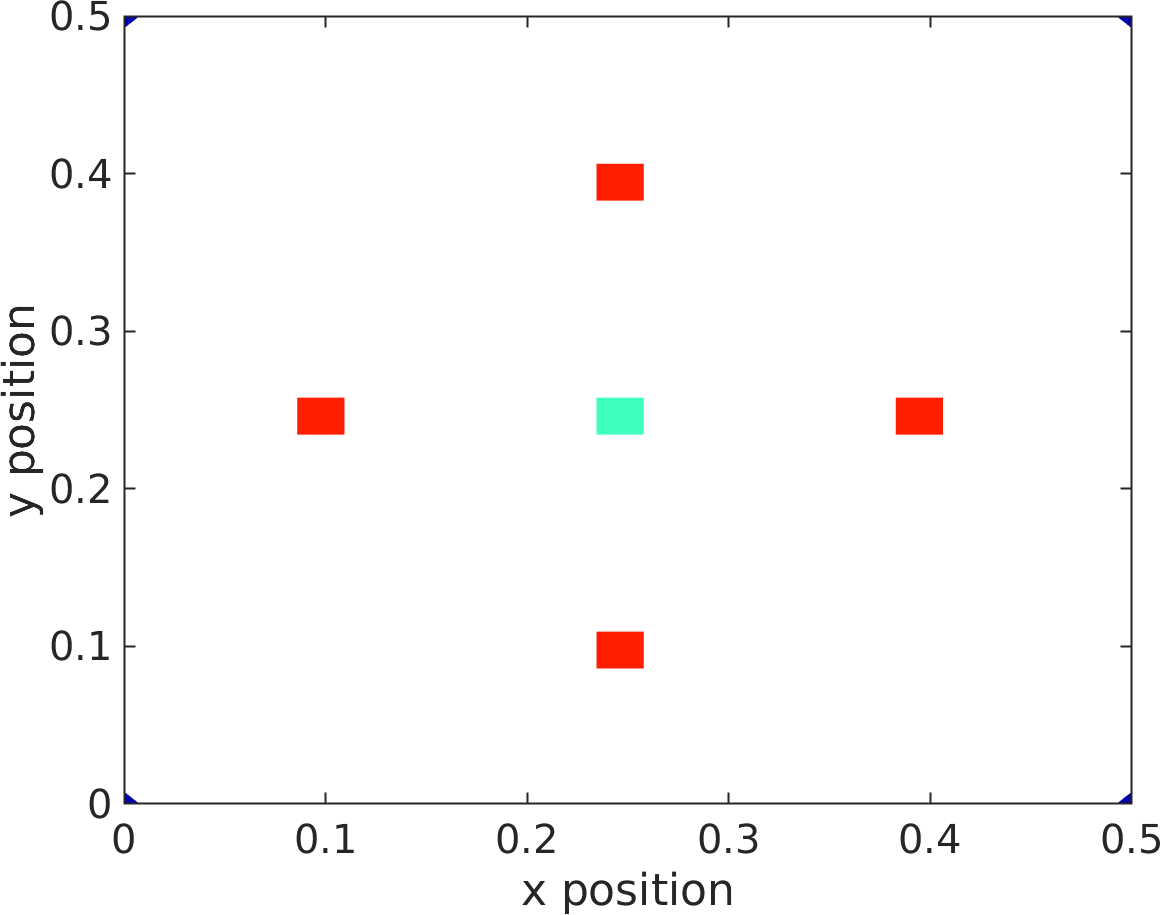}
        % \caption{Desired Profile}
        % \label{fig:heat_desired}
    \;\hspace{-0.15cm}
        \includegraphics[width=0.2395\textwidth]{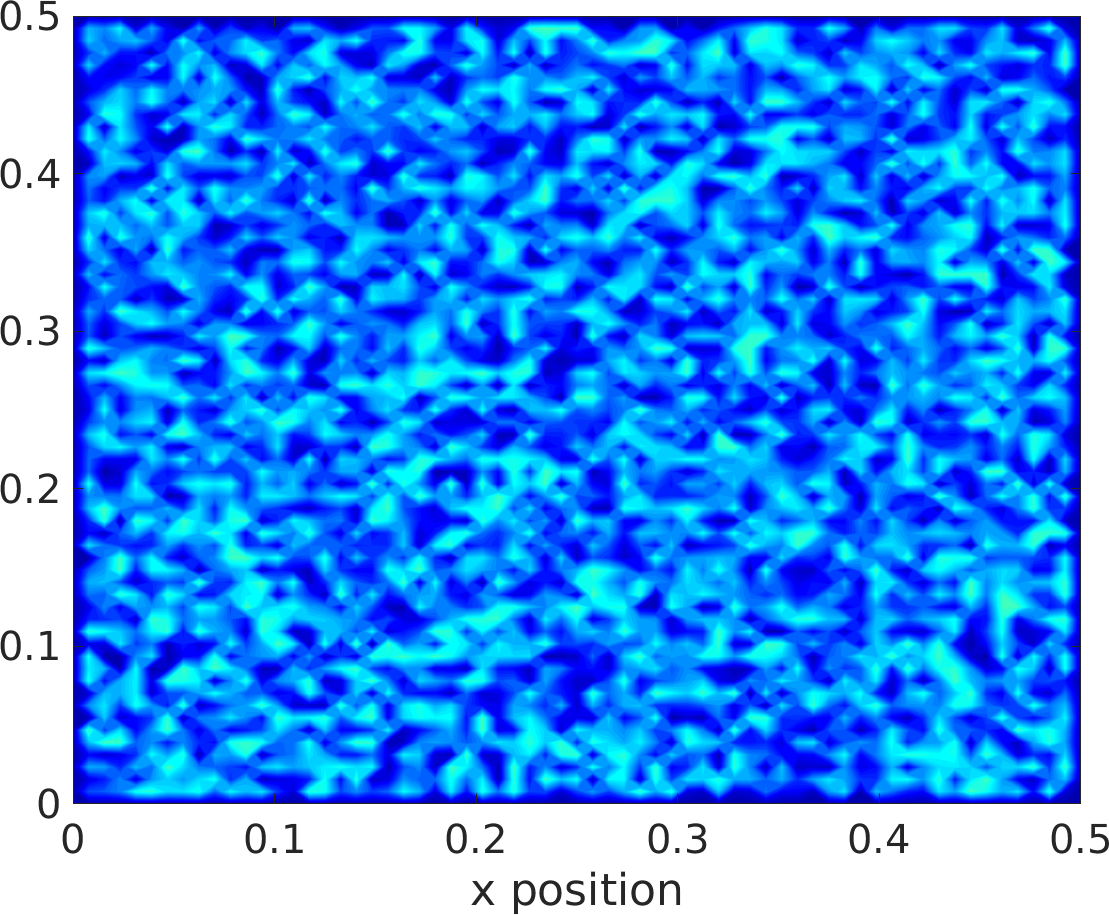}
        % \caption{Random Initial Profile}
        % \label{fig:heat_start}
    \;\hspace{-0.15cm}
        \includegraphics[width=0.224\textwidth]{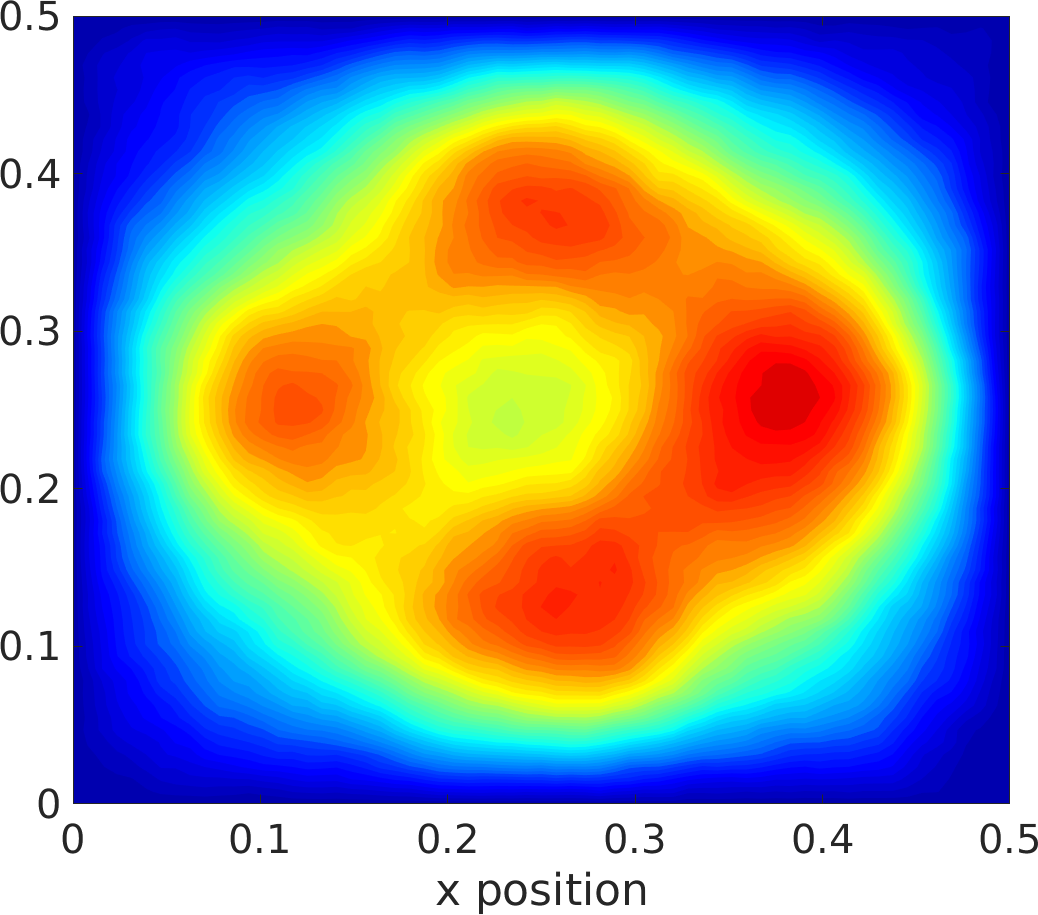}
        % \caption{Profile Half-way}
        % \label{fig:heat_halway}
    \;\hspace{-0.15cm}
        \includegraphics[width=0.2613\textwidth]{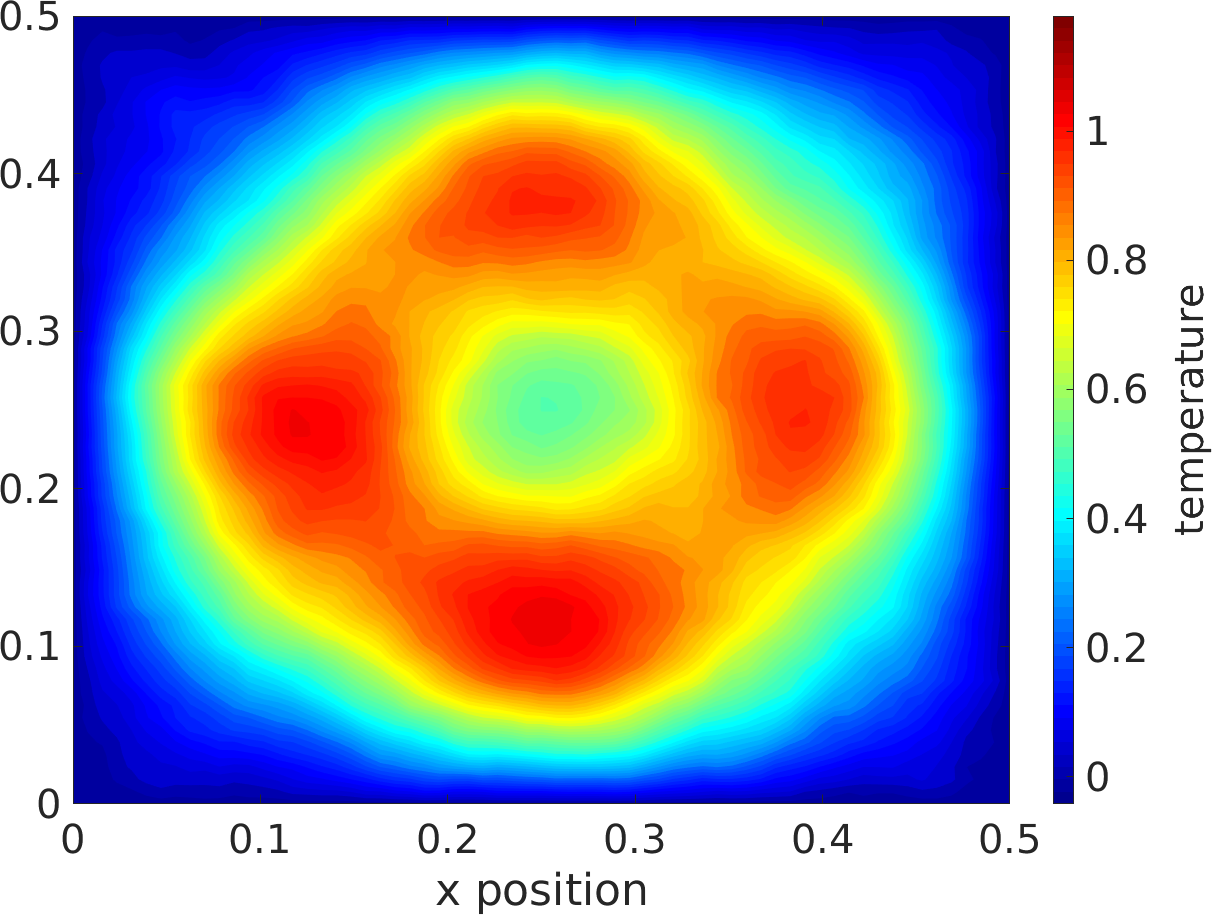}
        % \caption{Profile at the End}
        % \label{fig:heat_end}
    
    \caption{Infinite Dimensional control of the 2D-Heat SPDE under homogeneous Dirichlet boundary conditions: (first) desired temperature values at specified spatial regions, (second) random initial temperature profile, (third) temperature profile half way through the experiment and (fourth) temperature profile at the end of experiment.}
    \label{fig:Stochastic_Heat}
\end{figure*}
%%%%%%%%%%%%%%%%%%%%%%%%%%%%%

\begin{figure*}[ht!]
    \hspace{-.7cm}
    {\includegraphics[width=0.47\textwidth]{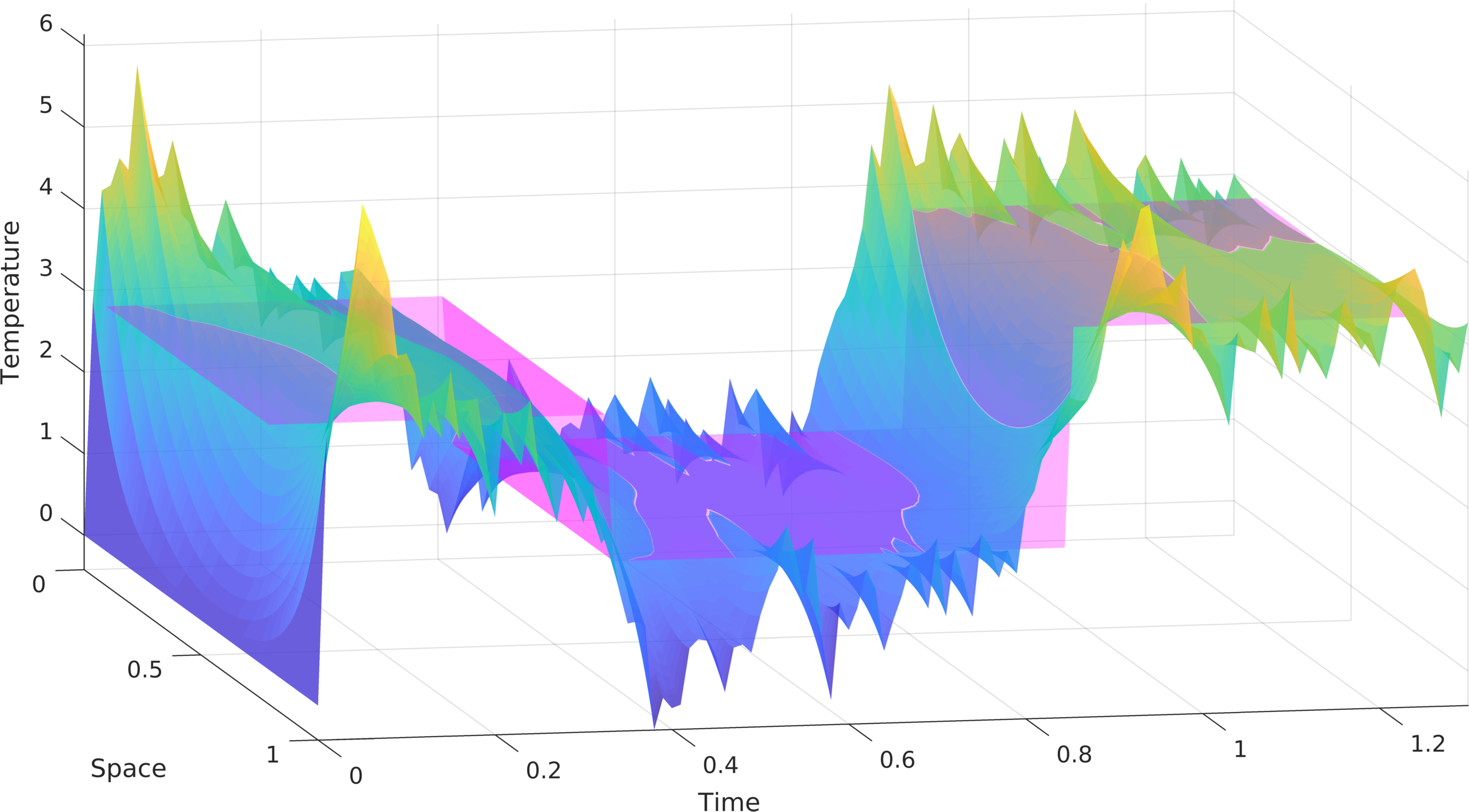}}
    %  \caption{Temperature Profile}
    %  \label{fig:bound_h}
	\;\hspace{-.01cm}
    {\includegraphics[width=0.47\textwidth]{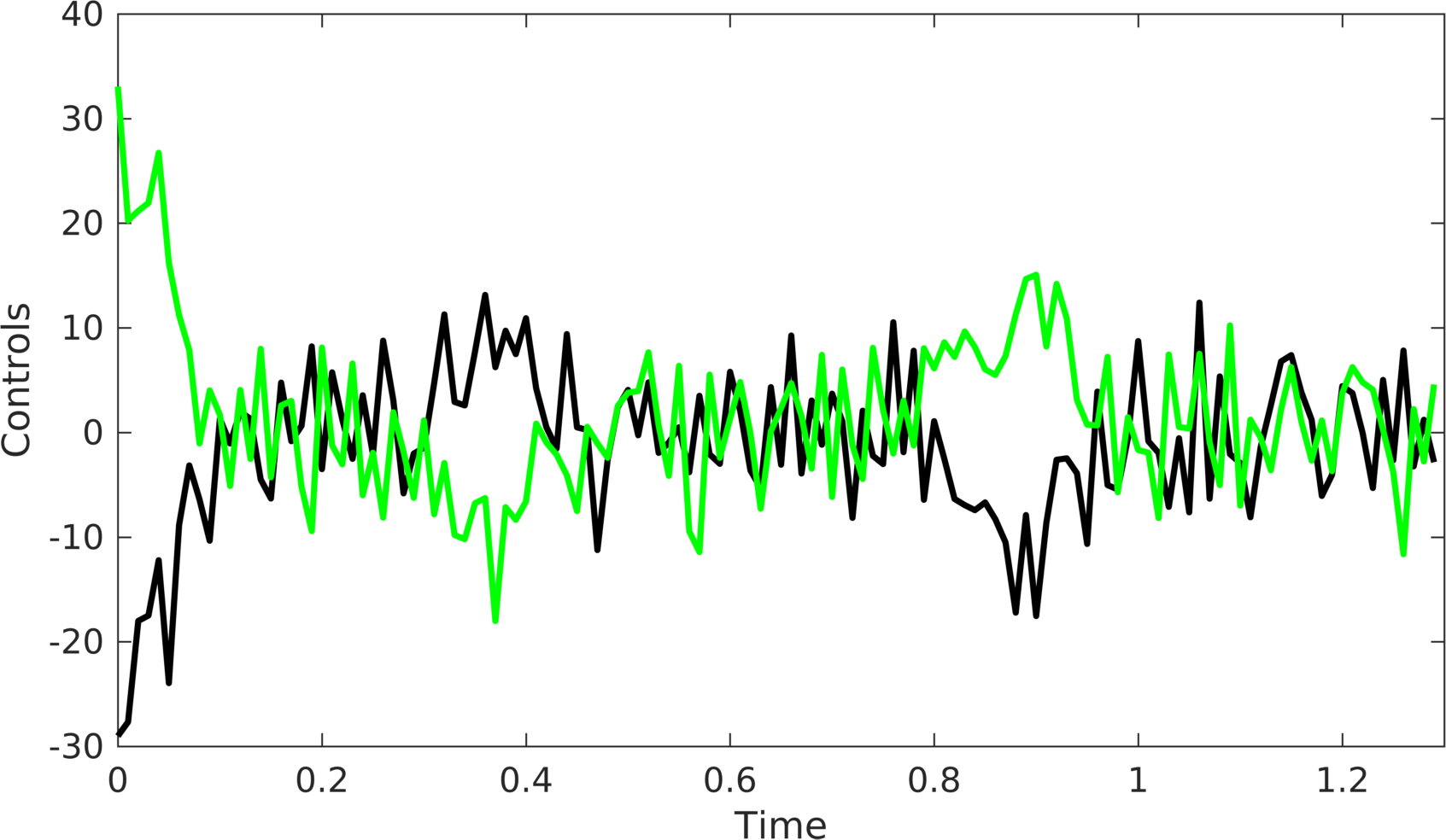}}
    %  \caption{Control Inputs}
    %  \label{fig:bound_u}
	\caption{Boundary control of stochastic 1-D heat equation: (left) Temperature profile over the 1D spatial domain over time. The magenta surface corresponds to the spatio-temporal desired temperature profile. Colors that are more red correspond to higher temperatures, and colors that are more violet correspond to lower temperature. (right) Control inputs at the left boundary in black and the right boundary in green entering through Neumann boundary conditions.}
	\label{fig:Stochastic_Boundary_Heat}
\end{figure*}

The next simulated experiment explores scalability to 2D spatial domains by considering the 2D stochastic heat equation with homogeneous Dirichlet boundary conditions. This experiment can be thought of as attempting to heat an insulated metal plate to specified temperatures in specified regions while the edges remain at a temperature of 0 in some scale. The desired temperatures and regions associated with this experiment are depicted in the left subfigure of \cref{fig:Stochastic_Heat}. This experiment tests the MPC scheme. 

Starting from a random initial temperature profile as in the second subfigure of \cref{fig:Stochastic_Heat}, and using a time horizon of 1.0 seconds, the MPC controller is able to achieve the desired temperature profile towards the end of the time horizon as shown in the fourth subfigure of \cref{fig:Stochastic_Heat}. The third subfigure of \cref{fig:Stochastic_Heat} depicts the middle of the time horizon. The MPC controller used 5 optimization iterations at every timestep and 25 sampled trajectories per iteration.

This result suggests that in this case, this approach can handle the added complexity of 2D stochastic fields. As depicted in the right subfigure of \cref{fig:Stochastic_Heat}, the proposed MPC control scheme solves the task of reaching the desired temperature at the specified spatial regions.

%%%%%%%%%%%%%%%%%%%%%%%%%%%%%%%%%%%%%%%%%%%%%%%%%%%%%%%%%%%%%%%%%
% \subsection{Boundary Control of Stochastic Parabolic Equations} \label{sec:bound}
%%%%%%%%%%%%%%%%%%%%%%%%%%%%%%%%%%%%%%%%%%%%%%%%%%%%%%%%%%%%%%%%%
\subsection{Boundary Control of Stochastic PDEs}\label{sec:bound}

The control update in \cref{eq:Iterative_optimalvariation1} describes control of SPDEs by distributing actuators throughout the field. However, our framework can also handle systems with control and noise at the boundary. A key requirement is to write such dynamical systems in the {\it mild} form
\begin{equation} \label{eq:mild_bound}
\small\begin{split}
&X(t)=e^{t\mathcal{A}}\xi+\int_{0}^{t}e^{(t - s)\mathcal{A}}F_1(t, X)\mathrm ds\\
&+(\lambda I - \mathcal{A})\bigg[\int_{0}^{t}e^{(t - s)\mathcal{A}}\mathsf{D}\big(F_2(t, X)+G(t, X)\calU(t, X; \vtheta)\big)\mathrm ds\\
&\hspace{18mm}+\int_{0}^{t}e^{(t - s)\mathcal{A}}\mathsf{D}B(t, X)\mathrm dV(s)\bigg], \quad \mathbb{P}\hspace{1mm} a.s.
\end{split}
\end{equation}\normalsize
where the operator $\mathsf{D}$ corresponds to the boundary conditions of the problem, and is called the {\it Dirichlet map} ({\it Neumann map}, resp.) for Dirichlet (Neumann, resp.) boundary control/noise. These maps take operators defined on the boundary Hilbert space $\Lambda_0$ to the Hilbert space $H$ of the domain. $\lambda$ is a real number also associated with the boundary conditions. The operator $\rd V$ describes a cylindrical Wiener process on the boundary Hilbert space $\Lambda_0$. For further details, the reader can refer to the discussion in \cite[Section 2.5 \& Appendix C.5]{fabbri} and in the Supplemental Material \cref{supsec:BoundarySPDEs}.

Studying optimal control problems with dynamics as in \cref{eq:mild_bound} is rather challenging. Therein HJB theory requires additional regularity conditions and proving convergence of \cref{eq:mild_bound} becomes nontrivial, especially when considering Dirichlet boundary noise. Numerical results are limited to simplistic problems. Nevertheless, \cref{eq:Iterative_optimalvariation1} is extended to the case of boundary control by similarly using tools from Girsanov's theorem to obtain the change of measure
\begin{equation}
\begin{split}\label{eq:boundary_girsanov}
     \frac{\rd \calL}{\rd \calL^{(i)} }=\exp \bigg(&-\int_{0}^{T}\big\langle B^{-1}G\calU, \rd V(s)\big\rangle_{\Lambda_{0}}\\ 
     &+\frac{1}{2}\int_{0}^{T}||B^{-1}G\calU||_{\Lambda_{0}}^{2} \rd s\bigg),
\end{split}
\end{equation}
which was was also utilized in reference \cite{duncan1998ergodic} for studying solutions of SPDEs similar to \cref{eq:mild_bound}. Using the control parameterization of the distributed case above results in the same approach described in \cref{eq:Iterative_optimalvariation1} with inner products taken with respect to the boundary Hilbert space $\Lambda_0$ to solve stochastic boundary control problems.

The stochastic 1-D heat equation under Neumann boundary conditions was explored to conduct simulated experiments that investigate the efficacy of the proposed framework in stochastic boundary control settings. The objective is to track a time-varying profile that is uniform in space by actuation only at the boundary points. The MPC scheme of \cref{eq:Iterative_optimalvariation1}, with 10 optimization iterations per time step is depicted in the left subfigure of \cref{fig:Stochastic_Boundary_Heat}. The random sample of the controlled state trajectory, depicted in a violet to red color spectrum, remains close to the time-varying desired profile, depicted in magenta. The associated bounded actuation signals acting on the two boundary actuators are depicted in the right subfigure of \cref{fig:Stochastic_Boundary_Heat}.

The above experiments were designed to cover stochastic SPDEs with nonlinear dynamics, multiple spatial dimensions, time-varying objectives, and systems with both distributed and boundary actuation. This range explores the versatility of the proposed framework to problems of many different types. Throughout these experiments, the control architecture produces state trajectories that solve the objective with high probability for the given stochasticity.

\section{Conclusion}

%%%%%%%%%%%%%%%%%%%%%%%SECTION CHANGE%%%%%%%%%%%%%%%%%%%%%%%%%%%%
% \section{Conclusion} \label{sec:conclusion}
%%%%%%%%%%%%%%%%%%%%%%%%%%%%%%%%%%%%%%%%%%%%%%%%%%%%%%%%%%%%%%%%%

This manuscript presented a variational optimization framework for distributed and boundary control of stochastic fields based on the free energy-relative entropy relation. The approach leverages the inherent stochasticity in the dynamics for control, and is valid for generic classes of infinite-dimensional diffusion processes. Based on thermodynamic notions that have demonstrated connections to established stochastic optimal control principles, algorithms were developed that bridge the gap between abstract theory and computational control of SPDEs. The distributed and boundary control experiments demonstrate that this approach can successfully control complex physical systems in a variety of domains.

This research opens new research directions in the area of control of stochastic fields that are ubiquitous in domains of physics. Based on the use of forward sampling, future research on the algorithmic side will include the development of efficient methods for representation and propagation of stochastic fields using techniques in machine learning such as Deep Neural Networks. Other directions include explicit feedback parameterizations and, in the context of boundary control, HJB approaches in the information theoretic formulation.

\section{Acknowledgments}
This work was supported by the Army Research Office contract W911NF2010151. Ethan N. Evans was supported by the SMART scholarship and George I. Boutselis was partially supported by  the Onassis Fellowship.

\bibliography{References}% Produces the bibliography via BibTeX.

\clearpage
\newpage
\onecolumngrid
\beginsupplement
\section*{Supplemental Material}

%%%%%%%%%%%%%%%%%%%%%%%%%%%%%%%%%%%%%%%%%%%%%%%%%%%%%%%%%%%%%%%%%%%%%%%%%%%%%%%%%%%%%%%%%%%%%%%%%%NEW_SECTION%%%%%%%%%%%%%%%%%%%%%%%%%%%%%%%%%%%%%%%%%%%%%%%%%%%%%%%%%%%%%%%%%%%%%%%%%%%%%%%%%%%%%%%%%%%%%%%%%%%%%%%%%%%
\section{Theoretical Results: Variational Optimization, Stochastic Optimal Control, and Thermodynamic Inequalities}

\subsection{Description of the Hilbert Space Wiener Process}\label{supsec:wiener}

In this section we provide formal definitions of various forms of the Hilbert space Wiener process. Some of these statements can be found in \cite[Section 4.1]{da2008stochastic}.

\begin{definition} \label{def:wiener_process}
Let $\calH$ denote a Hilbert space. A $\calH$-valued stochastic process $W(t)$ with probability law $\calL\big(W(\cdot)\big)$ is called a Wiener process if
\begin{enumerate}
    \item $W(0) = 0$
    \item $W$ has continuous trajectories
    \item $W$ has independent increments
    \item $\calL\big(W(t) - W(s)\big) = \calN \big(0, (t-s)Q\big), \quad t \geq s \geq 0$
    \item $\calL\big(W(t)\big) = \calL\big(-W(t)\big), \quad t \geq 0$
\end{enumerate}
\end{definition}

\begin{proposition}\label{prop:expansion}
Let $ \{e_{i}\}_{i=1}^{\infty} $  be a complete orthonormal system for the Hilbert Space $\calH$. Let $Q$ denote the covariance operator of the Wiener process $W(t)$. Note that $Q$ satisfies $Q e_{i}= \lambda_{i} e_{i}$, where $\lambda_{i}$ is the eigenvalue of $Q$ that corresponds to eigenvector $e_{i}$. Then, $W(t) \in \calH$ has the following expansion:
 \begin{equation}\label{eq:Wiener_expansion}
    W(t) = \sum_{j=1}^{\infty} \sqrt{\lambda_{j}} \beta_{j}(t) e_{j},
\end{equation}
\noindent where  $ \beta_{j}(t)  $  are real valued Brownian motions that are mutually independent on $ (\Omega, \calF, \mathbb{P})$.
\end{proposition}

\begin{definition}\label{def:trace_class}
Let $\{e_{i}\}_{i=1}^{\infty}$ be a complete orthonormal system for the Hilbert Space $\calH$. An operator $A$ on $\calH$ with the set of its eigenvalues $\{\lambda_i\}_{i=1}^{\infty}$ in a given basis $\{e_i\}_{i=1}^{\infty}$ is called a \textit{trace-class} operator if
\begin{equation}\label{eq:trace_class}
   \mbox{Tr}(A) := \sum_{n=1}^\infty \big\langle A e_n, e_n \big \rangle = \sum_{i=1}^{\infty} \lambda_i < \infty.
\end{equation}
\end{definition}

The two primary Wiener processes that are typically used to model spatio-temporal noise processes in the SPDE literature are the Cylindrical Wiener process and the $Q$-Wiener process. These are both referred to in the main text, and are defined in the following two definitions.

\begin{definition}\label{def:cylindrical_wiener}
A Wiener process $W(t)$ on $\calH$ with covariance operator $Q$ is called a \textit{Cylindrical Wiener process} if $Q$ is the identity operator $I$.
\end{definition}

\begin{definition}\label{def:q_wiener}
A Wiener process $W(t)$ on $\calH$ with covariance operator $Q$ is called a \textit{$Q$-Wiener process} if $Q$ is of trace-class.
\end{definition}

% The following facts follow immediately from \cref{def:cylindrical_wiener,def:q_wiener}.

% \begin{proposition}
% Let $W(t)$ be a Cylindrical Wiener process on $\calH$.
% \end{proposition}

An immediate fact following \cref{def:cylindrical_wiener} is that the Cylindrical Wiener process acts spatially \textit{everywhere} on $\calH$ with equal magnitude. One can easily conclude that for a Cylindrial Wiener process, the eigenvalues $\{\lambda_i\}_{i=1}^{\infty}$ of the covariance operator $Q$ are all unity, thus
\begin{equation}
    \sum_{i=1}^{\infty} \lambda_i = \infty.
\end{equation}
However, we note that in this case the series in \eqref{eq:Wiener_expansion} converges in another Hilbert space $U_{1}\supset U$, when the inclusion $\iota:U\rightarrow U_{1}$ is Hilbert-Schmidt. For more details see \cite{da1992stochastic}. 

On the other hand, immediately following \cref{def:q_wiener} is the fact that 
% the $Q$-Wiener process has a trace-class covariance operator satisfying \cref{eq:trace_class}, and thus 
a $Q$-Wiener process
must not have a spatially equal effect everywhere on the domain. More precisely, one has the following proposition.
\begin{proposition}
Let $W(t)$ be a $Q$-Wiener process on $\calH$ with covariance operator $Q$. Let $\{\lambda_i\}_{i=1}^{\infty}$ denote the set of eigenvalues of $Q$ in the complete orthonormal system $\{e_i\}_{i=1}^{\infty}$. Then the eigenvalues must fall into one of the following three cases:\\
i) For any $\varepsilon > 0$, there are only finitely many eigenvalues $\lambda_i$ of covariance operator $Q$ such that $|\lambda_i| > \varepsilon$. That is, the set $\{i \in \Nb_+ : \lambda_i > \varepsilon \}$, where $\Nb_+$ is the positive natural numbers, has finite elements. \\
ii) The eigenvalues $\lambda_i$ of covariance operator $Q$ follow a bounded periodic function such that $|\lambda_i|>0$ $\forall$ $i \in \Nb_+$ and $\sum_{i=1}^{\infty} \lambda_i = 0$.\\
iii) Both case i) and case ii) are satisfied. In this case the eigenvalues follow a bounded and convergent periodic function with $\lim_{i\rightarrow\infty} \lambda_i = 0$.
\end{proposition}
% More precisely, the $Q$-Wiener process must fall into one of two cases: \\

% Additionally, for $Q$-Wiener processes one has the following:

% \begin{prop} Let $ \{e_{i}\}_{i=1}^{\infty} $  be a complete orthonormal system for the Hilbert Space $U$  such that $ Q e_{i}= \lambda_{i} e_{i}   $. Here, $ \lambda_{i} $ is the eigenvalue of $ Q\in L(U) $ that corresponds to  eigenvector $ e_{i} $.  Then,  a   $Q$-Wiener process $ W(t) \in U$ satisfies the following properties
%  \begin{enumerate}
%  \item  $ W $ is a Gaussian process on $ U $  with mean and variance: 
%  \begin{equation}
%  \Eb [W(t)] = 0, \quad   \Eb [W(t) W(t) ] = t Q, ~ t\geq 0. 
%  \end{equation} 
 
%  \item For arbitrary  $ t \geq 0 $, $ W $ has the following expansion:
%  \begin{equation}\label{supeq:Q_Wiener}
%     W(t) = \sum_{j=1}^{\infty} \sqrt{\lambda_{j}} \beta_{j}(t) e_{j},
%  \end{equation}
% \noindent where  $ \beta_{j}(t)  $  are real valued brownian motions that are mutually independent on $ (\Omega, \F, \mathbb{P}). $% defined as:
%  %\begin{equation}\label{eq:Brownian_Motion}
%      %\beta_{j}(t) = \frac{1}{\sqrt{\lambda_{j}}} \langle W(t), e_{j} \rangle.
% % \end{equation}  
%  \end{enumerate}
% \end{prop}

\subsection{Relative Entropy and Free Energy Dualities in Hilbert Spaces}
\label{supsec:Free_Energy_Relative_Entropy}

In this section we provide the relation between free energy and relative entropy. This connection is valid for general probability measures, including  measures defined on path spaces induced by infinite-dimensional stochastic systems.  In what follows, $L^{p}$ ($1\leq p <\infty$) denotes the standard $L^p$ space of measurable functions and $\Pc$ denotes the set of probability measures.

\begin{definition}{\label{def:Free_Energy} \textit{(Free Energy)} Let  $   \calL \in \Pc $ a probability measure on a sample space $\Omega$, and consider a measurable function   $J: L^{p} \to  \Rb_{+}$. Then the following term:
  \begin{equation}
V  :=\frac{1}{\rho} \log_e   \int_{\Omega} \exp(\rho J )   \rd \calL (\omega),
  \end{equation}
 \noindent     is called the {\it free energy}\footnote{The function  $ \log_{e}  $ denotes the natural  logarithm.} of  $ J   $    with respect to $   \calL $ and  $ \rho \in \Rb$.}  
 \end{definition}  

  \begin{definition}{\label{def:Entropy} \textit{(Generalized Entropy)} Let  $   \calL ,\tilde{\calL} \in \Pc$, then the relative entropy of $  \tilde{\calL} $  with respect to $   \calL $ is defined as: 
\[    S\left( \tilde{\calL }|| \calL  \ \right) := \left\{
\begin{array}{l l}
  -\int_{\Omega}   \frac{    \rd \tilde{\calL}(\omega) }{\rd \calL(\omega)}  \log_e  \frac{ \rd \tilde{\calL}(\omega)}{\rd \calL (\omega)}         \rd \calL(\omega),
  \mbox{if $\tilde{\calL}<<\calL $},  \\
  +\infty,  \quad \mbox{otherwise},\\ \end{array} \right. \]
where ``$<<$'' denotes absolute continuity of $\tilde{\calL}$ with respect to $\calL$. We say that $ \tilde{\calL} $ is \textit{absolutely continuous} with respect to $ \calL $ and we write  $\tilde{\calL}<<\calL $ if  $ \calL(B) = 0 \Rightarrow \tilde{\calL}(B) = 0, ~ \forall B \in \F$. }
 \end{definition}  
 The  free energy and relative entropy relationship is expressed by the following theorem:
\begin{theorem} \textit{  Let  $( \Omega, {\F}) $  be a measurable space. Consider  $ \calL , \tilde{\calL}  \in  \Pc $ and     definitions \ref{def:Free_Energy}, \ref{def:Entropy}.  Under the assumption that $\tilde{\calL}<<\calL $,  the following inequality holds:}
    \begin{align} \label{supeq:Legendre}
    & - \frac{1}{\rho}   \log_e \Eb_{\calL} \bigg[ \exp( -\rho {J} )  \bigg]  \leq \bigg[    \Eb_{\tilde{\calL}}\left({J} \right)  -\frac{1}{\rho} S \left( \tilde{\calL}\hspace{0.05cm} \big|\big|  \calL \right)  \bigg] ,
    \end{align}   
 \noindent  \textit{where $ \mathbb{E}_{\calL}, \mathbb{E}_{\tilde{\calL}}  $  denote expectations under probability measures  $ \calL$, $\tilde{\calL} $  respectively. Moreover, $ \rho \in \Rb_{+}$ and $ J :  L^{p} \to  \Rb_{+} $.  The inequality in \eqref{supeq:Legendre} is the so called   Legendre Transform.   }  
\end{theorem}
 By defining the free energy  as temperature $  T = \frac{1}{\rho} $,  the Legendre transformation has the form: 
\begin{equation}\label{eq:Legendre_StatMech}
V  \leq E  - T S,
\end{equation}
and the equilibrium probability measure has the classical form: 
\begin{equation}\label{supeq:Gibbs}
\rd \calL^{*}(\omega) = \frac{\exp( - \rho J) \rd \calL(\omega)}{\int_{\Omega} \exp( - \rho J)  \rd \calL(\omega)},
\end{equation}
To verify the optimality of $\calL^*$, it suffices to substitute \eqref{supeq:Gibbs} in \eqref{supeq:Legendre} and show that the inequality collapses to an equality \cite{entropy_2015}.  The statistical physics  interpretation of inequality \eqref{eq:Legendre_StatMech} is that, maximization of entropy results in reduction of the available energy. At the thermodynamic equilibrium the entropy reaches its maximum and $V  = E  - T S$.

%%%%%%%%%%%%%%%%%%%%%%%%%%%%%%%%%%%%%%%%%%%%%%%%%%%%%%%%%%%%%%%%%%%%%%%%%%%%%%%%%%%%%%%%%%%%%%%%%%NEW_SECTION%%%%%%%%%%%%%%%%%%%%%%%%%%%%%%%%%%%%%%%%%%%%%%%%%%%%%%%%%%%%%%%%%%%%%%%%%%%%%%%%%%%%%%%%%%%%%%%%%%%%%%%%%%%

\subsection{A Girsanov Theorem for SPDEs}\label{supsec:girsanov}

\begin{theorem}[Girsanov] \label{girs} Let $\Omega$ be a sample space with a $\sigma$-algebra $\mathcal{F}$. Consider the following $H$-valued stochastic processes:
\begin{align}
\rd X&=\big(\A X+F(t, X)\big) \rd t+G(t, X)\rd W(t), \label{X}\\
\rd\tilde{X}&=\big(\A \tilde{X}+F(t, \tilde{X})\big)\rd t+\tilde{B}(t, \tilde{X})\rd t+G(t, \tilde{X})\rd W(t), \label{X_tilde}
\end{align}
where $X(0)=\tilde{X}(0)=x$ and $W\in U$ is a cylindrical Wiener  process with respect to measure $\mathbb{P}$. Moreover, for each $\Gamma\in C([0,T]; H)$, let the {\it law} of $X$ be defined as $\mathcal{L}(\Gamma):=\mathbb{P}(\omega\in\Omega|X(\cdot,\omega)\in\Gamma)$. Similarly, the law of $\tilde{X}$ is defined as $\tilde{\calL}(\Gamma):=\mathbb{P}(\omega\in\Omega|\tilde{X}(\cdot,\omega)\in\Gamma)$. Assume
\begin{equation}
    \label{psi_assum1}
    \mathbb{E}_{\mathbb{P}}\big[e^{\frac{1}{2}\int_{0}^{T}||\psi(t)||^2\mathrm dt}\big]<+\infty,
\end{equation}
where
\begin{equation}
    \psi(t):=G^{-1}\big(t, X(t)\big)\tilde{B}\big(t, X(t)\big)\in U_{0}.
\end{equation}
Then
\begin{equation} \label{eq:Lg}
\begin{split}
\tilde{\calL}(\Gamma)=  \textstyle\mathbb{E}_{\mathbb{P}}\big[\exp\big(\int_{0}^{T}\langle\psi(s), dW(s)\rangle_{U}-\frac{1}{2}\int_{0}^{T}||\psi(s)||_{U}^{2}ds\big)|X(\cdot)\in\Gamma\big].
\end{split}
\end{equation}

% Here, we write for brevity $\tilde{\calL}(\omega)\equiv\tilde{\calL}(\tilde{X}(\cdot,\omega)\in\Gamma)$.
\end{theorem}
\begin{proof}
Define the process:
\begin{equation}
\label{w_hat}
\hat{W}(t):=W(t)-\int_{0}^{t}\psi(s)\rd s.
\end{equation}
Under the assumption in \eqref{psi_assum1}, $\hat{W}$ is a cylindrical Wiener process with respect to a measure $\Qb$ determined by:
\begin{equation}
\label{girsanov_measure}
\begin{split}
\rd \Qb (\omega)&=\exp\big(\int_{0}^{T}\langle\psi(s),\rd W(s)\rangle_{U}-\frac{1}{2}\int_{0}^{T}||\psi(s)||_{U}^{2}\rd s\big)\rd\mathbb{P} \\ &=\exp\big(\int_{0}^{T}\langle\psi(s),\rd \hat{W}(s)\rangle_{U}+\frac{1}{2}\int_{0}^{T}||\psi(s)||_{U}^{2}\rd s\big)\rd\mathbb{P}.
\end{split}
\end{equation}
The proof for this result can be found in \cite[Theorem 10.14]{da1992stochastic}. Now, using \eqref{w_hat}, \eqref{X} will be rewritten as:
\begin{align}
\rd X&=\big(\A  X+F(t, X)\big)\rd t+G(t, X)\rd W(t) \label{X_new0}  \\
     &=\big(\A  X+F(t, X)\big)\rd t+B(t, X)\rd t+G(t, X)\rd\hat{W}(t) \label{X_new1}
\end{align}
Notice that  the  SPDE in \eqref{X_new1} has the same form as \eqref{X_tilde}. Therefore, under the introduced measure $\Qb$ and noise profile $\hat{W}$, $X(\cdot, \omega)$ becomes equivalent to $\tilde{X}(\cdot, \omega)$ from \eqref{X_tilde}. Conversely, under measure $\mathbb{P}$, \eqref{X_new0} (or \eqref{X_new1}) behaves as the original system in \eqref{X}. In other words, \eqref{X} and \eqref{X_new1} describe the same system on $(\Omega, \mathcal{F}, \mathbb{P})$. From the uniqueness of solutions and the aforementioned reasoning, one has:
\[\Pb\big(\{\tilde{X}\in\Gamma\}\big) = \Qb\big(\{X\in\Gamma\}\big).\]
The result follows from \eqref{girsanov_measure}.
\end{proof}

%%%%%%%%%%%%%%%%%%%%%%%%%%%%%%%%%%%%%%%%%%%%%%%%%%%%%%%%%%%%%%%%%%%%%%%%%%%%%%%%%%%%%%%%%%%%%%%%%%NEW_SECTION%%%%%%%%%%%%%%%%%%%%%%%%%%%%%%%%%%%%%%%%%%%%%%%%%%%%%%%%%%%%%%%%%%%%%%%%%%%%%%%%%%%%%%%%%%%%%%%%%%%%%%%%%%%

\subsection{Derivation of Variational Optimal Control of Spatio-Temporal Processes} \label{supsec:derivation}
Under the open loop parameterization $\calU(\vx,t) =  \vm(\vx)^{\rT} \vu(t)  $, the problem takes the form:
     \begin{align*}
    \vu^{*}  &=   \argmin   \bigg[   \int_{C}   \ \log_e  \frac{ \rd \calL^{*}(\mathsf{x})}{\rd \tilde{\calL}(\mathsf{x})}  \rd \calL^{*}(\mathsf{x}) \bigg] =    \argmin   \bigg[   \int_{C}   \ \log_e  \frac{ \rd \calL^{*}(\mathsf{x})}{\rd \calL(\mathsf{x})} \frac{ \rd \calL(\mathsf{x})}{\rd \tilde{\calL}(\mathsf{x})}  \rd \calL^{*}(\mathsf{x}) \bigg]. 
%                  & =    \argmin   \bigg[   \int_{\Omega}    \log_e  \frac{ \rd \rQ_{h}^{*}(\omega)}{\rd \rP_{h}(\omega)}  + \log_{e} \frac{ \rd \rP_{h}(\omega)}{\rd \rQ_{h}(\omega)}  \rd \rQ_{h}^{*}(\omega) \bigg]  \\
 %             & =   \argmin   \bigg[   \int_{\Omega}     \log_{e} \frac{ \rd \rP_{h}(\omega)}{\rd \rQ_{h}(\omega)}  \rd \rQ_{h}^{*}(\omega) \bigg] \\
      %        & =  \argmin  \Eb_{\calL^{*}}  \bigg[     \log_{e} \frac{ \rd \calL(\omega)}{\rd \tilde{\calL}(\omega)}  \bigg] 
  %            & =  \argmin \bigg[    \int_{\Omega} \int_{0}^{T} \int_{O}    \vm(\vx)^{\rT} \vu(t) \rd \hat{\omega}(\vx,t)
  \end{align*} 
By using the change of measures in \cref{eq:radon_param} of the main text, minimization of the last expression is equivalent to the minimization of the  expression: 
     \begin{align*}
 \Eb_{\calL^{*}}  \bigg[     \log_{e} \frac{ \rd \calL(\mathsf{x})}{\rd \tilde{\calL}(\mathsf{x}
 )}  \bigg]   &=   -\sqrt{\rho} \Eb_{\calL^{*}}  \bigg[  \int_{0}^{T}\vu(t)^{\top}\bar{ \vm}(t)\bigg] +\frac{1}{2} \rho\Eb_{\calL^{*}}  \bigg[  \int_{0}^{T}\vu(t)^{\top}\vM\vu(t)\rd t\bigg].
  \end{align*} 
 Since we apply the control in discrete time instances, it suffices to consider the class of step functions $\vu_i$, $i=0, \dots, L-1$ that are constant over fixed-size intervals $[t_i, t_{i+1}]$ of length $\Delta t$:
    \begin{align*}
  \Eb_{\calL^{*}}  \bigg[ \log_{e} \frac{ \rd \calL(\mathsf{x})}{\rd \tilde{\calL}(\mathsf{x})}  \bigg]  &=  -\sqrt{\rho} \sum_{i=0}^{L-1}\vu_{i}^{\top}\Eb_{\calL^{*}}  \bigg[  \int_{t_{i}}^{t_{i+1}}\bar{ \vm}(t)\bigg]  +\frac{1}{2}\rho\sum_{i=0}^{L-1}\vu_{i}^{\top}\vM\vu_{i}\Delta t,
  \end{align*} 
 where we have used the fact that $\vM$ is constant with respect to time. Due to the symmetry of $\vM$, minimization of the expression above with respect to  $ \vu_{i}$ results in: 
  \begin{equation}\label{eq:optimalcontrol}
      \vu_{i}^{*} =  \frac{1}{\sqrt{\rho} \Delta t  }\vM 
      ^{-1}  \Eb_{\calL^{*}}  \bigg[\int_{t_{i}}^{t_{i+1}}\bar{ \vm}(t)\bigg].
  \end{equation}
 Since we cannot sample directly from the optimal measure $\calL^{*}$, we need to
express the above expectation with respect to
the measure induced by controlled dynamics,  $\calL^{(i)}$. We can then directly sample
controlled trajectories based on $\calL^{(i)}$ and approximate the optimal control trajectory. The change
in expectation is achieved by applying the Radon-Nikodym derivative. These so called importance sampling steps are as follows. First define $W^{(i)}$ in a similar fashion to \cref{w_hat}, as:
\begin{equation} \label{w_hat_i}
    W^{(i)}(t) := W(t) - \int_0^t \sqrt{\rho} \calU^{(i)}(s) \rd s.
\end{equation}
Similar to \cref{X_new1}, one can rewrite the uncontrolled dynamics as
\begin{align}\label{eq:SPDEs_NoControl_Iter_i}
\begin{split}
\rd X &= \big(\A X   + F(t, X) \big)  \rd t +   \frac{1}{\sqrt{\rho}} G(t, X) \rd W(t)  \\ 
&=\big( \A X   + F(t, X) \big) \rd t + G(t, X) \big( \calU^{(i)}  \rd t  + \frac{1}{\sqrt{\rho}} \rd W^{(i)}(t) \big).
\end{split}
\end{align}

Under the open loop parameterization $\calU(t)(\vx) =\vm(\vx)^{\top} \vu_j$, where  $\vu_j$ are step functions on each interval $[t_j, t_{j+1}]$, the change of measures becomes
\begin{align}
\label{radon_i}
\begin{split}
\frac{ \rd \calL}{ \rd \calL^{(i)} } = \exp\bigg(-\sqrt{\rho}\sum_{k=0}^{L-1}\vu_k^{(i)\top}\int_{t_k}^{t_{k+1}}\bar{ \vm}^{(i)}(t)-\rho\frac{1}{2}\sum_{k=0}^{L-1}\vu_k^{(i)\top}\vM\vu_k^{(i)}\Delta t\bigg),
\end{split}
\end{align}
where
\begin{align}
    \bar{ \vm}^{(i)}(t) :=& \bigg[\langle m_{1},\rd W^{(i)}(t)\rangle_{U},...,\langle m_{N},\rd W^{(i)}(t)\rangle_{U}\bigg]^{\top}\in\mathbb{R}^{N}.\label{eq:small_m_i}
\end{align}
One can alternatively write this as
\begin{equation*}
\begin{split}
\bigg(\int_{t_j}^{t_{j+1}}\bar{ \vm}^{(i)}(t)\bigg)_{l}&=\int_{t_j}^{t_{j+1}} \big\langle m_l,\rd W^{(i)}(t)\big\rangle_{U}=\int_{t_j}^{t_{j+1}}\big\langle m_l,\rd W(t)-\sqrt{\rho}\calU^{(i)}(t)\rd t\big\rangle_{U}\\
&=\int_{t_j}^{t_{j+1}}\big\langle m_l,\rd W(t)\big\rangle_{U}-\sqrt{\rho}\bigg[\big\langle m_{l},m_{1}\big\rangle_{U},...,\big\langle m_{l},m_{N}\big\rangle_{U}\bigg]\vu_{j}^{(i)}\Delta t.
\end{split}
\end{equation*}
It follows that:
\begin{equation} \label{eq:small_m_i_redef}
    \int_{t_j}^{t_{j+1}}\bar{ \vm}^{(i)}(t)= \int_{t_j}^{t_{j+1}}\bar{ \vm}(t)-\sqrt{\rho}\Delta t\vM \vu_{j}^{(i)}.
\end{equation}

In order to derive the iterative scheme, we perform one step of importance sampling and express the associated expectations with respect the measure induced by the  controlled SPDE in \cref{eq:SPDEs_Control} of the main text. Let us begin by modifying \cref{eq:optimalcontrol} via  the appropriate change of measures from \eqref{radon_i}, as well as \eqref{w_hat_i}:
  \begin{align}
      \vu_{j}^{i+1} =  \frac{1}{\sqrt{\rho} \Delta t  }\vM 
      ^{-1}  \int_\Omega \bigg[ \frac{\rd \calL^{*}}{\rd \calL}\frac{\rd \calL}{\rd \calL^{(i)}} \int_{t_{i}}^{t_{i+1}}\bar{ \vm}(t)\bigg]  \rd \calL^{(i)}  &=  \frac{1}{\sqrt{\rho} \Delta t  }\vM 
      ^{-1}  \int \bigg[\frac{\exp(-\rho J)}{\Eb_{\calL} \big[\exp(-\rho J) \big]} \frac{\rd \calL}{\rd \calL^{(i)}}  \int_{t_{i}}^{t_{i+1}}\bar{ \vm}(t)\bigg] \rd \calL^{(i)} \nonumber \\
      &=  \frac{1}{\sqrt{\rho} \Delta t  }\vM 
      ^{-1}  \int \bigg[\frac{\exp(-\rho J)}{\Eb_{\calL^{(i)}} \big[\frac{\rd \calL}{\rd \calL^{(i)}}\exp(-\rho J) \big]} \frac{\rd \calL}{\rd \calL^{(i)}} \int_{t_{i}}^{t_{i+1}}\bar{ \vm}(t)\bigg]\rd \calL^{(i)} \nonumber \\
      &=  \frac{1}{\sqrt{\rho} \Delta t  }\vM 
      ^{-1} \Eb_{\calL^{(i)}} \bigg[\frac{\exp(-\rho J^{(i)})}{\Eb_{\calL^{(i)}} \big[\exp(-\rho J^{(i)}) \big]} \int_{t_{i}}^{t_{i+1}}\bar{ \vm}(t)\bigg], \label{eq:u_j_iplus1_is}
  \end{align}
One can reorder \cref{eq:small_m_i_redef} as
\begin{equation} \label{eq:small_m_i_redef1}
    \int_{t_j}^{t_{j+1}}\bar{ \vm}(t)= \int_{t_j}^{t_{j+1}}\bar{ \vm}^{(i)}(t)+\sqrt{\rho}\Delta t\vM \vu_{j}^{(i)}.
\end{equation}
and plug it into \cref{eq:u_j_iplus1_is} to yield:
\begin{align}
    \vu_{j}^{i+1} &= \frac{1}{\sqrt{\rho} \Delta t  }\vM^{-1} \Eb_{\calL^{(i)}} \bigg[\frac{\exp(-\rho J^{(i)})}{\Eb_{\calL^{(i)}} \big[\exp(-\rho J^{(i)}) \big]} \int_{t_j}^{t_{j+1}}\bar{ \vm}^{(i)}(t)+\sqrt{\rho}\Delta t\vM \vu_{j}^{(i)}\bigg] \nonumber \\
    &= \vu_j^{(i)} + \frac{1}{\sqrt{\rho} \Delta t  }\vM^{-1} \Eb_{\calL^{(i)}} \bigg[\frac{\exp(-\rho J^{(i)})}{\Eb_{\calL^{(i)}} \big[\exp(-\rho J^{(i)}) \big]} \int_{t_j}^{t_{j+1}}\bar{ \vm}^{(i)}(t)\bigg],
\end{align}
which is equivalent to \cref{eq:Iterative_optimalvariation1} in the main text with $J^{(i)}$ defined by \cref{eq:J_i} in the main text.
% The result is equation \eqref{eq:optimalcontrol}.

%  \begin{align}\begin{split} \label{eq:Inf_Kolmogorov1}
%     - \partial_{t} \psi\big(t,X(t)\big) &= -\rho \ell\big(t, X(t)\big)   \psi\big(t,X(t)\big)  + \big\langle \psi_{X}, \A X(t) + F\big(X(t)\big) \big\rangle +  \frac{1}{2} \tr\Big[ \psi_{XX} (B Q^{\frac{1}{2}})  ( B Q{}^{\frac{1}{2}})^{*}  \Big]  \\
%     & \quad +  g\big(t,X(t)\big).
%   \end{split}
% \end{align} 

%%%%%%%%%%%%%%%%%%%%%%%%%%%%%%%%%%%%%%%%%%%%%%%%%%%%%%%%%%%%%%%%%%%%%%%%%%%%%%%%%%%%%%%%%%%%%%%%%%NEW_SECTION%%%%%%%%%%%%%%%%%%%%%%%%%%%%%%%%%%%%%%%%%%%%%%%%%%%%%%%%%%%%%%%%%%%%%%%%%%%%%%%%%%%%%%%%%%%%%%%%%%%%%%%%%%%

\subsection{Feynman-Kac for Spatio-Temporal Diffusions: From Expectations to Hilbert Space PDEs}\label{supbsec:Feynman-Kac}
\begin{lemma}{(Infinite Dimensional Feynman-Kac):} Define $\psi:[t_0,T]\times H \rightarrow \mathbb{R}$ as the conditional  expectation: 
\begin{align}\label{eq:Feynman_Kac}
\psi(t, X) :=\Eb_{\calL} \bigg[ \exp\Big(  -\rho {J \big( \Xb^{T}_{t,X} \big)  } \Big) \bigg| \F_{t} \bigg] +\Eb_{\calL} \bigg[  \int_{t}^{T} g(X,t) \exp\Big(  -\rho {\Phi \big( \Xb^{s}_{t,X} \big) } \Big) \rd s \bigg|  \F_{t} \bigg],
\end{align}
  
\noindent evaluated on stochastic trajectories $ \Xb^{T}_{t,X}  $  generated by the infinite dimensional stochastic systems in eqs. (2) and (3) of the main text and  $\rho \in \Rb_{+}$.  The trajectory dependent  terms  $\Phi \big( \Xb^{T}_{t,X}  \big): L^{p} \to \Rb_{+}$ and $J  \big( \Xb^{T}_{t,X} \big) : L^{p} \to \Rb_{+}$ are defined as follows:  
  \begin{align}\begin{split}\label{eq:StateCost}
     \Phi \big( \Xb^{s}_{t,X}  \big) &=   \int_{t}^{s} \ell\big(\tau,X(\tau)\big) \rd \tau, \\
      J  \big( \Xb^{T}_{t,X}  \big)  &=  \phi(T,X) + \Phi \big( \Xb^{T}_{t,X}  \big).\end{split}
  \end{align}  
Also, let $\psi(t,X) \in C_{b}^{1,2}([0,T] \times H)$. Then the function $\psi(t,X)$ satisfies the following equation:
 \begin{align}\begin{split} \label{eq:Inf_Kolmogorov1}
    - \partial_{t} \psi\big(t,X(t)\big) &= -\rho \ell\big(t, X(t)\big)   \psi\big(t,X(t)\big)  + \big\langle \psi_{X}, \A X(t) + F\big(X(t)\big) \big\rangle +  \frac{1}{2} \tr\Big[ \psi_{XX} (B Q^{\frac{1}{2}})  ( B Q{}^{\frac{1}{2}})^{*}  \Big]  \\
    & \quad +  g\big(t,X(t)\big).
  \end{split}
\end{align} 
\end{lemma}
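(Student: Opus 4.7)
The plan is to construct an $(\F_s)$-martingale $M(s)$ on $[t,T]$ whose terminal value reproduces the defining expression of $\psi$, then apply the Hilbert-space It\^o formula to $\psi$ composed with $X$ and equate the resulting bounded-variation drift to zero. This is the standard route from conditional expectations to backward Kolmogorov equations, adapted to mild solutions on $H$.

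First I would introduce the discount factor $Z(s) := \exp\bigl(-\rho\int_t^s \ell(\tau,X(\tau))\rd\tau\bigr)$ and define
\begin{equation*}
M(s) := \psi\bigl(s,X(s)\bigr)\, Z(s) + \int_t^s g\bigl(\tau, X(\tau)\bigr)\, Z(\tau)\, \rd\tau, \qquad s\in[t,T].
\end{equation*}
Using the Markov property of the mild solution $X$ together with the definition \eqref{eq:Feynman_Kac}, a direct tower-property computation shows $\Eb_{\calL}[M(T)\mid \F_s] = M(s)$, so that $M$ is an $(\F_s)$-martingale under $\calL$; in particular $M(t) = \psi(t,X)$.

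Next I would apply the infinite-dimensional It\^o formula (see \cite[Chapter 4]{da1992stochastic}) to the $C_b^{1,2}$ functional $\psi$ evaluated along $X$, combined with the elementary product rule for the absolutely continuous process $Z$, which contributes $\rd Z(s) = -\rho\,\ell(s,X(s))\,Z(s)\,\rd s$ and no quadratic covariation. Collecting terms and adding the contribution of the $g$-integral in $M$ yields
\begin{equation*}
\rd M(s) = Z(s)\Bigl[\partial_s\psi + \langle \psi_X, \A X + F\rangle + \tfrac{1}{2}\mathrm{tr}\bigl(\psi_{XX}(GQ^{1/2})(GQ^{1/2})^{\ast}\bigr) - \rho\,\ell\,\psi + g\Bigr]\rd s + Z(s)\,\langle \psi_X, G\, \rd W(s)\rangle.
\end{equation*}
Because $M$ is a martingale, its finite-variation component must vanish almost everywhere on $[t,T]$; specializing to $s=t$, where $Z(t)=1$ and $X(t)=X$, produces exactly \eqref{eq:Inf_Kolmogorov1}.

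The hard part will be justifying the It\^o formula when $\A$ is unbounded: the mild solution $X(s)$ generally does not lie in $D(\A)$, so the term $\langle \psi_X, \A X\rangle$ is only formally defined. The standard remedy is to replace $\A$ by its Yosida approximation $\A_n := n\A(nI-\A)^{-1}$ and work with the strong solutions $X_n$ of the approximating SPDEs, apply the classical It\^o formula to $\psi(s,X_n(s))Z_n(s)$ where every object is strongly defined, and then pass to the limit $n\to\infty$ using the $C_b^{1,2}$-regularity of $\psi$ together with the convergence $X_n\to X$ in $C([t,T];H)$ guaranteed by \cite[Chapter 7]{da1992stochastic}. A secondary technicality is promoting the stochastic integral $\int_t^{\cdot} Z(s)\langle\psi_X,G\,\rd W(s)\rangle$ from a local to a true martingale; this follows from the boundedness of $\psi_X$ and $Z$ together with the Hilbert--Schmidt hypothesis on $GQ^{1/2}$ that underlies the well-posedness of \eqref{eq:Inf_SDE}.
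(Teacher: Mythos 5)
Your proposal is correct and follows essentially the same route as the paper's proof: the paper likewise uses the tower property across $[t, t+\delta t]$ to obtain the discounted recursion and then extracts the PDE via the infinite-dimensional It\^{o} differentiation rule, which is precisely your martingale-with-vanishing-drift argument written out as an explicit increment limit (your $Z(s)$ is the paper's factor $\exp(-\rho\int_t^{t+\delta t}\ell_\tau\,\rd\tau)$, and your ``drift $=0$'' step is the paper's subtraction of $\Eb[\psi(t+\delta t,X(t+\delta t))\,|\,\F_t]$ followed by $\delta t\to 0$). The one substantive addition on your side is the Yosida-approximation step justifying $\langle\psi_X,\A X\rangle$ for mild solutions that need not lie in $D(\A)$, together with the promotion of the stochastic integral to a true martingale --- technicalities the paper's proof passes over in silence when it invokes the It\^{o} rule.
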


{\begin{proof}  The proof starts with the expectation in \eqref{eq:Feynman_Kac} which is an expectation conditioned on the filtration  $\F_{t}$. To keep the notation short we will drop the dependencies on $t$ and $X(t)$, and will write $\phi_{T} = \phi\big(T, X(T)\big)$,  $\ell_{t} = \ell\big(t, X(t)\big)$, and $g_{t} = g\big(t,X(t)\big)$. We split the integrals inside the expectations to write:
  \begin{align*}
\psi(t,X) &=\Eb_{\calL} \bigg[ \exp\bigg(  -\rho \phi_{T}  -  \rho  \int_{t}^{T} \ell_{\tau} \rd \tau    \bigg) \bigg|  \F_{t} \bigg] +\Eb_{\calL}  \bigg[  \int_{t}^{T} g_{s} \exp{\bigg(  -\rho \int_{t}^{s} \ell_{\tau} \rd \tau    \bigg)   }\rd s    \bigg|  \F_{t}  \bigg] \\
     &=  \Eb_{\calL}  \bigg[ \exp\bigg(  -\rho \phi_{T} -  \rho  \int_{t+\delta t}^{T} \ell_{\tau} \rd \tau    \bigg)\exp (- \int_{t}^{t+\delta t} \ell_{\tau} \rd \tau)  \bigg|  \F_{t} \bigg]  \nonumber\\
     &\quad + \Eb_{\calL} \bigg[  \int_{t}^{t+\delta t} g_{s} \exp{\bigg(  -\rho \int_{t}^{s} \ell_{\tau} \rd \tau    \bigg)   }\rd s   \bigg| \F_{t} \bigg] + \Eb_{\calL}  \bigg[  \int_{t+\delta t}^{T} g_{s} \exp{\bigg(  -\rho \int_{t}^{s} \ell_{\tau} \rd \tau    \bigg)   }\rd s   \bigg|  \F_{t} \bigg] \
     \end{align*}
    
%  We  choose $  \delta t \to 0 $  and therefore substitute it with $ \rd t $.    
%%     \scriptsize
%      \begin{align*}
%&\psi(X,t) =\\
%     &=  \Eb \bigg[ \exp\big(  -\rho \phi_{T} -  \rho  \int_{t+\rd t}^{t_f} \ell_{\tau} \rd \tau    \big) \exp (- \int_{t}^{t+\rd t} \ell_{\tau} \rd \tau) \bigg|  \F_{t} \bigg] \nonumber\\
%     &+\Eb \bigg[  \int_{t}^{t+\rd t} g_{s} \exp{\big(  -\rho \int_{t}^{s} \ell_{\tau} \rd \tau    \big)   }\rd s  \bigg|  \F_{t} \bigg] \nonumber\\
%     &+\Eb \bigg[  \int_{t+\rd t}^{t_f} g_{s} \exp\big(  -\rho \int_{t}^{t+ \rd t} \ell_{\tau} \rd \tau    \big) \\
%     & \times  \exp{\big(  -\rho \int_{t+\rd t}^{s} \ell_{\tau} \rd \tau}    \big)  \rd s   \bigg|  \F_{t} \bigg]. \\
%       \end{align*} 
  
  By using the law of iterated expectations between the two sub-sigma algebras $ \F_{t} \subseteq \F_{t+ \delta t} $  we have that:
  \small
  \begin{align*}
  \psi(t, X) &= \Eb_{\calL} \Bigg[  \Eb_{\calL}  \bigg[ \exp\bigg(  -\rho \phi_{T} -  \rho  \int_{t+\delta t}^{T} \ell_{\tau} \rd \tau    \bigg) \exp \bigg(- \int_{t}^{t+\delta t} \ell_{\tau} \rd \tau \bigg) \bigg|  \F_{t+\delta t} \bigg] \bigg|  \F_{t}  \bigg]  \nonumber\\
  & \quad + \Eb_{\calL}  \bigg[   \int_{t}^{t+\rd t} g_{s} \exp{\bigg(  -\rho \int_{t}^{s} \ell_{\tau} \rd \tau    \bigg) }\rd s  \bigg|  \F_{t} \Bigg] \\
  &\quad + \Eb_{\calL}  \bigg[   \Eb_{\calL}   \bigg[ \int_{t+\rd t}^{T} g_{s} \exp\bigg(  -\rho \int_{t}^{t+ \rd t} \ell_{\tau} \rd \tau    \bigg)  \exp{\bigg(  -\rho \int_{t+\rd t}^{s} \ell_{\tau} \rd \tau}    \bigg)  \rd s   \bigg|  \F_{t+\delta t} \bigg]  \bigg|  \F_{t}  \bigg].
  \end{align*} \normalsize   
      Next we use the fact that  the conditioning on the filtration $ \F_{t+\delta t} $ results in the following equality:     
    \begin{align*}
        &\Eb_{\calL} \bigg[   \Eb_{\calL}   \bigg[ \exp\bigg(  -\rho \phi_{T} -  \rho  \int_{t+\delta  t }^{T} \ell_{\tau} \rd \tau    \bigg) \exp \bigg(- \int_{t}^{t+\delta  t } \ell_{\tau} \rd \tau\bigg) \bigg|  \F_{t+\delta  t } \bigg] \bigg|  \F_{t}  \bigg]  \\
        & =  \Eb_{\calL} \bigg[  \exp \bigg(- \int_{t}^{t+\delta  t } \ell_{\tau} \rd \tau\bigg)    \Eb_{\calL}   \bigg[ \exp\bigg(  -\rho \phi_{T} -  \rho  \int_{t+\delta  t}^{T} \ell_{\tau} \rd \tau    \bigg) \bigg|  \F_{t+\delta t} \bigg] \bigg|  \F_{t}  \bigg]
     \end{align*}  
         \normalsize
      By further using this property of independence we have:
          \small
     \begin{align*}
      \psi(t,X) &=  \Eb_{\calL} \bigg[  \exp \bigg(- \rho \int_{t}^{t+\delta  t } \ell_{\tau} \rd \tau \bigg)  \Eb_{\calL}  \bigg[ \exp\bigg(  -\rho \phi_{T} -  \rho  \int_{t+\delta  t}^{T} \ell_{\tau} \rd \tau    \bigg)  \bigg|  \F_{t+\delta t} \bigg] \bigg|  \F_{t}  \bigg]  \nonumber\\
      &\quad + \Eb_{\calL}  \bigg[  \int_{t}^{t+\rd t} g_{s} \exp{\bigg(  -\rho \int_{t}^{s} \ell_{\tau} \rd \tau    \bigg)   }\rd s  \bigg|  \F_{t} \bigg] \\
      &\quad + \Eb_{\calL}  \bigg[ \exp\bigg(  -\rho \int_{t}^{t+\delta  t } \ell_{\tau} \rd \tau    \bigg) \bigg] \Eb_{\calL}  \bigg[ \int_{t+\delta  t }^{T} g_{s}    \exp{\bigg(  -\rho \int_{t+\delta  t  }^{s} \ell_{\tau} \rd \tau}    \bigg)  \rd s   \bigg|  \F_{t+\delta t} \bigg]  \bigg| \F_{t}  \bigg] \\
      &=  \Eb_{\calL} \bigg[  \exp \bigg( - \rho \int_{t}^{t+\delta  t } \ell_{\tau} \rd \tau \bigg)  \psi(t+ \delta  t , X(t+\delta  t ) ) \bigg| \F_{t}  \bigg] +\Eb_{\calL}  \bigg[  \int_{t}^{t+\delta  t } g_{s} \exp{\bigg(  -\rho \int_{t}^{s} \ell_{\tau} \rd \tau    \bigg)   }\rd s  \bigg|  \F_{t} \bigg] \nonumber
     \end{align*}               \normalsize     
  The last expression provides the backward propagation of the  $ \psi\big(t,X(t)\big) $ by employing a expectation over $ \psi\big(t+\delta t,X(t + \delta t)\big) $.  To get the backward  deterministic Kolmogorov equations for the infinite dimensional case we  subtract the term $  \small{\Eb \bigg[ \psi\big(t +\delta  t ,X(t+\delta  t )\big) \bigg|  \F_{t} \bigg] }$  from both sides:   
\small
 \begin{align*}
  -  \Eb_{\calL}  \bigg[  \psi\big(t+ \delta t, X(t+\delta t) \big)  -    \psi\big(t, X(t)\big) \bigg|  \F_{t} \bigg]    &= \Eb_{\calL}  \bigg[   \bigg\lbrace\exp\bigg( -\rho   \int_{t}^{t+\delta  t } \ell_{\tau}  \rd \tau\bigg) -1 \bigg\rbrace \psi\big(t + \delta  t ,X(t+\delta  t )\big)  \bigg|  \F_{t} \bigg] \\
  & \quad +  \Eb_{\calL}  \bigg[  \int_{t}^{t+\delta  t } g_{s} \exp{\bigg(  -\rho \int_{t}^{s} \ell_{\tau} \rd \tau    \bigg)   }\rd s  \bigg|  \F_{t} \bigg].
\end{align*}\normalsize
Next we take the limit as $ \delta t \to 0 $ we have:
\small
\begin{align*}
    &- \lim_{\delta  t  \to 0}  \Eb_{\calL}  \bigg[  \psi\big(t+ \delta t, X(t+\delta t)\big)  -    \psi\big(t, X(t)\big)  \bigg|  \F_{t} \bigg]  \\
    & = \lim_{\delta  t \to 0}  \Eb_{\calL}  \bigg[    \bigg(\exp( -\rho   \int_{t}^{t+\delta  t } \ell_{\tau}  \rd \tau) -1 \bigg)\psi\big(t +\delta  t ,X(t+\delta  t )\big)  \bigg|  \F_{t} \bigg]  +  \lim_{\delta  t  \to 0}  \Eb_{\calL}  \bigg[ \int_{t}^{t+\delta  t } g_{s} \exp{\big(  -\rho \int_{t}^{s} \ell_{\tau} \rd \tau    \big)   }\rd s  \bigg|  \F_{t} \bigg].
\end{align*}\normalsize

Thus we have to compute three terms. We employ the Lebegue dominated convergence theorem to pass the limit inside the expectations:
\begin{equation}
 -  \lim_{\delta  t  \to 0}   \Eb_{\calL}  \bigg[  \psi\big(t+ \delta t, X( t+\delta t)\big)  -    \psi\big(t,X(t)\big)  \bigg|  \F_{t} \bigg] =  \Eb_{\calL}  \bigg[ \rd  \psi \bigg|  \F_{t} \bigg]
\end{equation}
By using the It\^{o} differentiation rule \cite[Theorem 4.32]{da1992stochastic} for the case of infinite dimensional stochastic systems we have that:
 \begin{align*}\begin{split}
  \Eb_{\calL}  \bigg[ \rd \psi\big(t, X(t)\big) \bigg|  \F_{t} \bigg] =    \partial_{t} \psi \big(t,X(t)\big)  \rd  t   +    \big\langle \psi_{X}, \A X(t) + F\big(X(t)\big) \big\rangle \rd  t  + \frac{1}{2} \tr\Big[\psi_{XX} (B Q^{\frac{1}{2}})  ( B Q{}^{\frac{1}{2}})^{*}  \Big] \rd t  \end{split}
\end{align*}
The next term is

\begin{align*}
\begin{split}
  \lim_{\delta  t   \to 0} \Eb_{\calL}  \bigg[   \bigg(\exp( -\rho   \int_{t}^{t+\delta  t } \ell_{\tau}  \rd \tau) -1 \bigg)\psi\big(t +\delta  t ,X(t+\delta  t )\big)  \bigg|  \F_{t} \bigg]  &=  - \Eb_{\calL}  \bigg[  \ell_{t} \psi\big(t ,X(t)\big)  \bigg|  \F_{t} \bigg] \\
  &= - \rho \ell\big(t,X(t)\big) \psi\big(t ,X(t)\big)  \rd t
\end{split}
\end{align*}
\normalsize
The third term is 
 \begin{align*}\begin{split}
\lim_{\delta  t \to 0}  \Eb_{\calL}  \bigg[ \int_{t}^{t+\delta  t } g_{s} \exp{\bigg(  -\rho \int_{t}^{s} \ell_{\tau} \rd \tau    \bigg)   }\rd s  \bigg|  \F_{t} \bigg] =  \Eb_{\calL}  \bigg[ g\big(t,X(t)\big) \delta  t  \bigg|  \F_{t}  \bigg] =  g\big(t,X(t)\big) \rd t
\end{split}
\end{align*}

% \begin{align*}
%  &  -    \psi_{t}(t,X(t)) \rd t    -    \langle \psi_{X}, \calA X(t) + \calF(t,X(t)) \rangle \rd t \\
%  & - \frac{1}{2} \tr(\psi_{XX} (B Q^{\frac{1}{2}})  ( B Q{}^{\frac{1}{2}})^{*}  ) \rd t    \\
%  &  =  (\exp( -\rho \ell  \rd t) -1 ) \Eb_{\rP(X^{'}|X)} \bigg[ \psi(t + \rd t,X(t+\rd t))  \bigg|  \F_{t} \bigg]  \\
%  &+  g(t,X(t)) \rd t.
%\end{align*}

Combining the three terms above, we have shown that $\psi\big(t,X(t)\big)$ satisfies the backward Kolmogorov equation for the case of the infinite dimensional stochastic system in \cref{eq:SPDEs_Control} of the main text. 
\end{proof}}

%%%%%%%%%%%%%%%%%%%%%%%%%%%%%%%%%%%%%%%%%%%%%%%%%%%%%%%%%%%%%%%%%%%%%%%%%%%%%%%%%%%%%%%%%%%%%%%%%%NEW_SECTION%%%%%%%%%%%%%%%%%%%%%%%%%%%%%%%%%%%%%%%%%%%%%%%%%%%%%%%%%%%%%%%%%%%%%%%%%%%%%%%%%%%%%%%%%%%%%%%%%%%%%%%%%%%

\subsection{Connections to Stochastic Dynamic Programming}\label{supsec:Connections_DP}
%\textbf{You have some symbols that you haven't defined. Like $\mathbb{X}, \otimes$. }

In this section we show the connections between stochastic dynamic programming and the free energy.  Before proceeding, let $C_{b}^{k,n}([0,T] \times H)$  denote the space of all functions $\xi:[0, T] \times H \to \Rb^{1}$ that are $k$ times continuously \textit{Fr\'{e}chet}  differentiable with respect to time $t$ and $n$ times \textit{G$\hat{a}$teaux} differentiable with respect to $ X $. In addition, all their partial derivatives are continuous and bounded in $[0,T] \times H$. Furthermore, trajectories starting at $X \in E$ over the time horizon  $[t, T ]$  are denoted  $\Xb^{T}_{t,X} \equiv \Xb(T,t,\omega;X)$. Using this notation, we have that $\Xb(t,t,\omega;X) = X$.   Finally, for real separable Hilbert space $E$, by the notation $x \otimes y$ we mean a linear bounded operator on $E$ such that:
\begin{equation*}
    (x \otimes y)z = x  \langle y,z  \rangle, \quad \forall \, x,y,z \in E.
\end{equation*}

%In addition we  use the notation   $  \bbX^{'} \equiv  \bbX^{t_f}_{t+\rd t,X(t+\rd t)}  $  to denote stochastic trajectories starting from state $ X(t+ \rd t) $ at time instant $ t+\rd t $, while we use  $X^{'} \equiv X(t+\rd t) $ to represent the state at time instant $ t + \rd t $.

First, we perform the exponential transformation on the function  $ \psi\big(t,X(t)\big) \in C_{b}^{1,2}([0,T] \times H)$ and show that the transformed function $V\big(t,X(t)\big) \in C_{b}^{1,2}([0,T] \times H)$ satisfies the HJB equation for the case of infinite dimensional systems \cite{fabbri}. This result is derived with general $Q$-Wiener noise with covariance operator $Q$, however it holds also for cylindrical Wiener noise ($Q=I$). This will require  applying the Feynman-Kac lemma and deriving the backward Chapman Kolmogorov equation for the case  of infinite-dimensional stochastic  systems.  The backward Kolmogorov equations will result in the HJB equation after a logarithmic transformation is applied.   We start from the  free energy and relative entropy inequality in  \eqref{supeq:Legendre} and define the function $ \psi\big(t,X(t)\big):[0,T] \times H \rightarrow \mathbb{R}$  as follows: 
 \begin{equation*}
 \psi\big(t,X(t)\big) := \Eb_{\calL} \bigg[ \exp\big(  -\rho {J \big( \Xb^{T}_{t,X} )  } \big) \bigg| X\bigg], 
 \end{equation*}
which is simply the free energy as defined in \cref{def:Free_Energy}. By using the Feynman-Kac lemma we have that the function $   \psi(t, X) $ satisfies the backward Chapman Kolmogorov equation specified as follows:  
 \begin{align}
 \begin{split} \label{eq:Inf_Kolmogorov}
  - \partial_{t} \psi\big(t,X(t)\big) &= -\rho \ell\big(t, X(t)\big)  \psi\big(t,X(t)\big) + \big\langle \psi_{X}, \A X(t) + F\big(X(t)\big) \big\rangle + \frac{1}{2} \tr\bigg[\psi_{XX}(G Q^{\frac{1}{2}})  (G Q^{\frac{1}{2}})^{*}  \bigg].
 \end{split}
 \end{align} 
\noindent where $\partial_t \psi\big(t,X(t)\big)$ denotes the Fr\'{e}chet derivative of $\psi\big(t,X(t)\big)$ with respect to $t$, and $\psi_X$ and $\psi_{XX}$ denote the first and second G$\hat{\mbox{a}}$teaux derivatives of $\psi\big(t,X(t)\big)$ with respect to $X(t)$. Starting with the exponential transformation we have: 
 \begin{equation*}
  V\big(t,X(t)\big) = - \frac{1}{\rho} \log_{e} \psi\big(t,X(t)\big) \implies  \psi\big(t,X(t)\big) = e^{-\rho V(t,X(t))}. 
  \end{equation*}
\noindent Next we compute the functional derivatives $V_{X}$ and $V_{XX}$ as functions of the functional derivatives $\psi_{X}$ and $\psi_{XX}$. This results in:    
     \begin{align*}\begin{split} 
      \rho  \partial_{t}V\big(t,X(t)\big) e^{-\rho V}    & =  -\rho \ell\big(t, X(t)\big) e^{-\rho V}   - \rho  \big\langle V_{X} e^{-\rho V}, \A X(t) + F\big(X(t)\big) \big\rangle \\
      &\quad +  \frac{\rho}{2} \tr\bigg[ (V_{X} \otimes V_{X} ) (G Q^{\frac{1}{2}})  ( G Q{}^{\frac{1}{2}})^{*} e^{-\rho V}  \bigg] - \frac{1}{2} \tr\bigg[ (V_{XX}  (G Q^{\frac{1}{2}}) (G Q{}^{\frac{1}{2}})^{*} e^{-\rho V}  \bigg]. 
  \end{split}
\end{align*}   
 The last equations simplifies to:  
 \begin{align}\begin{split}\label{HJB0}
  - \partial_{t} V\big(t,X(t)\big) &= \ell\big(t, X(t)\big)  + \big\langle V_{X} , \A X(t) + F\big(X(t)\big) \big\rangle -  \frac{1}{2\rho}  \tr\bigg[ (V_{X} \otimes V_{X} ) (G Q^{\frac{1}{2}})  ( G Q{}^{\frac{1}{2}})^{*}  \bigg] \\
  &\quad +  \frac{1}{2 \rho }\tr\bigg[ V_{XX}  (G Q^{\frac{1}{2}}) (G Q{}^{\frac{1}{2}})^{*}   \bigg] 
  \end{split}
  \end{align}
  From the definition of the trace operator $\tr [A] := \sum_{j=1}^\infty \langle A e_j, e_j \rangle$ for  orthonormal basis $\lbrace e_j \rbrace$ over the domain of $A$, we have the following expression: 
\begin{align*}
 \frac{1}{2} \tr\bigg[ (V_{X} \otimes V_{X} ) (G Q^{\frac{1}{2\rho}})  ( G Q{}^{\frac{1}{2}})^{*}  \bigg]  = \frac{1}{2\rho}  \sum_{j=1}^{\infty} \big\langle ( V_{X} \otimes V_{X}) (G Q^{\frac{1}{2}})  ( G Q{}^{\frac{1}{2}})^{*} e_{j},e_{j} \big\rangle
\end{align*}  
\noindent  Since  $(x  \otimes y) z = x \langle y,z \rangle$ we have that:
  \begin{align*}
 \frac{1}{2\rho} \sum_{j=1}^{\infty} \big\langle ( V_{X} \otimes V_{X}) (G Q^{\frac{1}{2}})  ( G Q{}^{\frac{1}{2}})^{*} e_{j},e_{j} \big\rangle  &= \frac{1}{2\rho}  \sum_{j=1}^{\infty}   \Big\langle   V_{X}  \big\langle  V_{X} ,(G Q^{\frac{1}{2}})  ( G Q{}^{\frac{1}{2}})^{*} e_{j} \big\rangle ,e_{j} \Big\rangle  \\
  & = \frac{1}{2\rho}  \sum_{j=1}^{\infty}  \big\langle  V_{X} ,(G Q^{\frac{1}{2}})  ( G Q{}^{\frac{1}{2}})^{*} e_{j} \big\rangle   \big\langle   V_{X}  ,e_{j} \big\rangle  \\
  & = \frac{1}{2\rho}  \sum_{j=1}^{\infty}  \big\langle (G Q^{\frac{1}{2}})  ( G  Q{}^{\frac{1}{2}})^{*} V_{X},  e_{j} \big\rangle   \big\langle   V_{X}  ,e_{j} \big\rangle \\
  &\!\!\!\!\!\!\!\underset{\text{Parseval}} {=} \frac{1}{2} \big\langle   V_{X}, (G Q^{\frac{1}{2}})  ( G Q{}^{\frac{1}{2}})^{*} V_{X}   \big\rangle \\
  & = \frac{1}{2\rho}  \big|\big|  ( G Q{}^{\frac{1}{2}})^{*} V_{X}  \big|\big|_{U_{0}}^2 
\end{align*}
  Substituting back to \eqref{HJB0} we  have the HJB equation for the infinite dimensional case:
      \begin{align*}\begin{split} 
    -    V_{t}\big(t,X(t)\big)  &=  \ell\big(t, X(t)\big)   +     \big\langle V_{X} , \A X(t) +  F\big(X(t)\big) \big\rangle 
  +   \frac{1}{2 \rho} \tr\bigg[ V_{XX}  (G Q^{\frac{1}{2}}) (G Q{}^{\frac{1}{2}})^{*}   \bigg]  -   \frac{1}{2\rho}   ||  ( G Q{}^{\frac{1}{2\rho}})^{*} V_{X}  ||_{U_{0}}^2 
  \end{split}
\end{align*}

In the same vein, one can also show that the relative entropy between the probability measures induced by the uncontrolled and controlled infinite dimensional systems in eqs. (2) and (3) of the main text, respectively, results in an  infinite dimensional quadratic control cost. This requires the use of the Radon-Nikodym derivative from our generalization of Girsanov's theorem for the case of infinite dimensional stochastic systems in eqs. (2) and (3) of the main text.

%In cases where very high approximation accuracy (very low error) of the SPDE is required, a dense grid must be used for the spatial discretization (say a square grid with $l$ nodes per side). In this case, $n = l^2$ can be extremely large, yet a reasonable approximation of the noise yields $n>m$, producing a matrix $\mathcal{R}\in \mathbb{R}^{n \times m}$ that has fewer rows than columns, so that $\mathcal{R}\mathcal{R}^\top$ is rank deficient and therefore not invertible. The resulting derivation cannot be completed.

%The above discussion demonstrates that the infinite dimensional derivation is \textit{more general} than the equivalent derivation in finite dimensions. However, in cases where the expansion of the noise is large enough (i.e. $m \geq n$), the infinite dimensional derivation exactly follows the derivation in finite dimensions. This can be found in \cite{Grady_MPPI2015,Grady_MPPI2016,Grady_MPPI2018,Grady_ICRA_17,TRO_GRady_2018}, where the authors derive the above framework in finite dimensions for a variety of systems.

%\textbf{talk about Kappen?}

%%%%%%%%%%%%%%%%%%%%%%%%%%%%%%%%%%%%%%%%%%%%%%%%%%%%%%%%%%%%%%%%%%%%%%%%%%%%%%%%%%%%%%%%%%%%%%%%%%NEW_SECTION%%%%%%%%%%%%%%%%%%%%%%%%%%%%%%%%%%%%%%%%%%%%%%%%%%%%%%%%%%%%%%%%%%%%%%%%%%%%%%%%%%%%%%%%%%%%%%%%%%%%%%%%%%%

\subsection{SPDEs under Boundary Control and Noise}\label{supsec:BoundarySPDEs}
Let us consider the following problem with Neumann boundary conditions:
%\begin{equation}
%    \label{eq::laplace_dir}
%    \begin{cases}
%               \Delta_{\mathsf x} y(\mathsf x)=0,\quad \mathsf x\in \mathcal{O}\\
%               y(\mathsf x)=\gamma(\mathsf x), \quad \mathsf x\in \partial\mathcal{O}
%            \end{cases}
%\end{equation}
\begin{equation}
    \label{eq::laplace_neum}
    \begin{cases}
              \Delta_{\mathsf x} y(\mathsf x)=\lambda y(\mathsf x),\quad \mathsf x\in \mathcal{O}\\
              \frac{\partial}{\partial n}y(\mathsf x)=\gamma(\mathsf x), \quad 
              \mathsf x\in \partial\mathcal{O}
            \end{cases}
\end{equation}
where $\Delta_{\mathsf x}$ corresponds to the Laplacian, $\lambda\geq0$ is a real number, $\mathcal{O}$ is a bounded domain in $\mathbb{R}^{d}$ with regular boundary $\partial\mathcal{O}$ and $\frac{\partial}{\partial n}$ denotes the normal derivative, with $n$ being the outward unit normal vector. As shown in \cite{fabbri} and references therein, there exists a continuous operator %$\mathsf{D}_{D}:H^{s}(\partial\mathcal{O})\rightarrow H^{s+1/2}(\mathcal{O})$ and
$\mathsf{D}_{N}:H^{s}(\partial\mathcal{O})\rightarrow H^{s+3/2}(\mathcal{O})$ such that %$\mathsf{D}_{D}\gamma$ and 
$\mathsf{D}_{N}\gamma$ is the solution to \eqref{eq::laplace_neum}. Given this operator, stochastic parabolic equations with Neumann boundary conditions of the following type:
\begin{equation}
\label{eq::bcp}
\begin{split}
&\frac{\partial h(t, \mathsf x)}{\partial t}=\Delta_{\mathsf x}h(t, \mathsf x)+f_1(t, h)+c_1(t, h)\frac{\partial w(t,\mathsf x)}{\partial t},\quad \mathsf x\in \mathcal{O}\\
%&\begin{cases}
&\frac{\partial h(t, \mathsf x)}{\partial n}=f_2(t, h)+c_2(t, h)\frac{\partial v(t,\mathsf x)}{\partial t} ,\quad \mathsf x\in \mathcal{\partial O},\\
%h(t, \mathsf x)&=f_2(t, h)+c_2(t, h)\frac{\partial v(t,\mathsf x)}{\partial t},\quad \mathsf x\in \mathcal{\partial O},\quad \text{for Dirichlet boundary conditions,}
%\\
%\end{cases}\\
&h(0, \mathsf x) = h_0(\mathsf x).
\end{split}
\end{equation}
can be written in the mild abstract form:
\begin{equation}
    \label{eq::abs_bound}
    \begin{split}
        \begin{cases}
        \begin{split}
        X(t&)=e^{t\mathcal{A}_N}X_{0}+\int_{0}^{t}e^{(t-s)\mathcal{A}_N}F_1(s, X)\rd s+\int_{0}^{t}e^{(t-s)\mathcal{A}_N}C_1(s, X)\rd W(s)\\
        &\quad +\int_{0}^{t}(\lambda I-\mathcal{A}_N)^{1/4+\epsilon}e^{(t-s)\mathcal{A}_N}G_NF_2(s, X)\rd s+\int_{0}^{t}(\lambda I-\mathcal{A}_N)^{1/4+\epsilon}e^{(t-s)\mathcal{A}_N}G_NC_2(s, X)\rd V(s),
        \end{split}
        \end{cases}
    \end{split}
\end{equation}
where $G_N:=(\lambda I-\mathcal{A}_N)^{3/4-\epsilon}\mathsf{D}_N$, and the remaining terms are defined with respect to the space-time formulation of \eqref{eq::abs_bound}. A similar expression can be obtained for Dirichlet conditions as well, however the solution has to be investigated under weak norms, or in weighted $L^2$ spaces. More details can be found in \cite[Appendix C]{fabbri} and references therein.

%%%%%%%%%%%%%%%%%%%%%%%%%%%%%%%%%%%%%%%%%%%%%%%%%%%%%%%%%%%%%%%%%%%%%%%%%%%%%%%%%%%%%%%%%%%%%%%%%%NEW_SECTION%%%%%%%%%%%%%%%%%%%%%%%%%%%%%%%%%%%%%%%%%%%%%%%%%%%%%%%%%%%%%%%%%%%%%%%%%%%%%%%%%%%%%%%%%%%%%%%%%%%%%%%%%%%

\subsection{An Equivalence of the Variational Optimization approach for SPDEs with Q-Wiener Noise}\label{supsec:Q_Wiener_derivation}

In this section we briefly discuss how one obtains an equivalent variational optimization as in Section \rm{III} of the main text, for control of SPDEs with $Q$-Wiener noise. Consider the uncontrolled and controlled version of an $H$-valued process be given, respectively, by:

\begin{align}
\rd X &= \big(\A X   + F(t, X)\big)  \rd t +   \frac{1}{\sqrt{\rho}} \sqrt{Q} \rd W(t), \label{supeq:Q_Wiener_SPDEs_NoControl}\\
\rd \tilde{X} &= \big( \A \tilde{X}   + F(t, \tilde{X})  \big) \rd t  + \sqrt{Q}\big(\calU(t, \tilde X)\rd t+ \frac{1}{\sqrt{\rho}} \rd  W(t)\big), \label{supeq:Q_Wiener_SPDEs_Control}
\end{align}
with initial condition $X(0) = \tilde{X}(0) = \xi$. Here, $Q$ is a trace-class operator, and $W \in U$ is a cylindrical Wiener process. The assumption that $Q$ is of trace class is expressed as:
\begin{equation*}
    \text{Tr}\big[Q\big] = \sum_{n=1}^\infty \big\langle Q e_n, e_n \big \rangle < \infty.
\end{equation*}

As opposed to the discussion following \cref{eq:SPDEs_Control} of the main text, in this case we do not require any contractive assumption on the operator $\A$ due to the nuclear property of the operator $Q$. The stochastic integral $\int_{0}^{t}e^{(t-s)\A}\sqrt{Q}\rd W(s)$ is well defined in this case \cite[Chapter 4.2]{da1992stochastic}. Define the process:
\begin{equation*}
\begin{split}
    W_Q(t) &:= \sqrt{Q}W(t) = \sum_{n=1}^\infty \sqrt{Q}e_n \beta_n(t) \\
    &= \sum_{n=1}^\infty \sqrt{\lambda_n} e_n \beta_n(t)
\end{split}
\end{equation*}
where the basis $\lbrace e_n \rbrace$ satisfies the eigenvalue-eigenvector relationship $Qe_n = \lambda e_n$. The process $W_Q(t)$ satisfies the properties in Definition \ref{def:q_wiener}, and is therefore a $Q$-Wiener process. 

The above case is an SPDE driven by Q-Wiener noise, which is quite different from the cylindrical Wiener process described in the rest of this work. In order to state the Girsanov's theorem in this case, we first define the Hilbert space $U_0 := \sqrt{Q}(U) \subset U$ with inner product $\langle u, v \rangle_{U_0} := \big\langle Q^{-1/2}u, Q^{-1/2} v \big\rangle_U$, $\forall u,v \in U_0$.

\begin{theorem}[Girsanov] \label{sup:girs} Let $\Omega$ be a sample space with a $\sigma$-algebra $\mathcal{F}$. Consider the following $H$-valued stochastic processes:
\begin{align}
\rd X &= \big(\A X   + F(t, X)\big)  \rd t +   \frac{1}{\sqrt{\rho}} \rd W_Q(t), \label{sup:X}\\
\rd \tilde{X} &= \big( \A \tilde{X}   + F(t, \tilde{X})  \big) \rd t  + \sqrt{Q}\calU(t, \tilde X)\rd t+ \frac{1}{\sqrt{\rho}} \rd  W_Q(t)\big), \label{sup:X_tilde}
\end{align}
where $X(0)=\tilde{X}(0)=x$ and $W_Q\in U$ is a Q-Wiener  process with respect to measure $\mathbb{P}$. Moreover, for each $\Gamma\in C([0,T]; H)$, let the {\it law} of $X$ be defined as $\mathcal{L}(\Gamma):=\mathbb{P}(\omega\in\Omega|X(\cdot,\omega)\in\Gamma)$. Similarly, the law of $\tilde{X}$ is defined as $\tilde{\calL}(\Gamma):=\mathbb{P}(\omega\in\Omega|\tilde{X}(\cdot,\omega)\in\Gamma)$. Then
\begin{equation} \label{supeq:Lg}
\begin{split}
\tilde{\calL}(\Gamma)= \textstyle\mathbb{E}_{\mathbb{P}}\big[\exp\big(\int_{0}^{T}\big\langle\psi(s), dW_Q(s)\big\rangle_{U_0}-\frac{1}{2}\int_{0}^{T}||\psi(s)||_{U_0}^{2}ds\big)|X(\cdot)\in\Gamma\big],
\end{split}
\end{equation}
where we have defined $\psi(t):=\sqrt{\rho}\calU\big(t, \tilde{X}(t)\big)\in U_{0}$ and assumed
\begin{equation}\label{sup:psi_assum1}
    \mathbb{E}_{\mathbb{P}}\Big[e^{\frac{1}{2}\int_{0}^{T}||\psi(t)||^2\mathrm dt}\Big]<+\infty.
\end{equation}% Here, we write for brevity $\tilde{\calL}(\omega)\equiv\tilde{\calL}(\tilde{X}(\cdot,\omega)\in\Gamma)$.
\end{theorem}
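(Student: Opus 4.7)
The plan is to adapt the proof of Theorem~\ref{girs} almost verbatim to the $Q$-Wiener setting, the only real change being that every stochastic integral with respect to $W$ is replaced by one with respect to $W_Q$, with inner products taken in $U_0 = \sqrt{Q}(U)$ rather than in $U$. The key external input is the $Q$-Wiener analogue of the Girsanov density (Da Prato--Zabczyk \cite[Theorem~10.14]{da1992stochastic}), which guarantees that the candidate exponential martingale is a genuine $\mathbb{P}$-martingale under the Novikov-type assumption \eqref{sup:psi_assum1}.

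First, I would introduce the shifted process
\begin{equation*}
\hat{W}_Q(t) := W_Q(t) - \int_{0}^{t} \psi(s)\, \rd s,
\end{equation*}
and define a candidate measure $\mathbb{Q}$ via
\begin{equation*}
\frac{\rd \mathbb{Q}}{\rd \mathbb{P}} = \exp\!\left(\int_{0}^{T}\langle \psi(s), \rd W_Q(s)\rangle_{U_0} - \tfrac{1}{2}\int_{0}^{T}\|\psi(s)\|_{U_0}^{2}\,\rd s\right).
\end{equation*}
Under \eqref{sup:psi_assum1}, the $Q$-Wiener Girsanov theorem implies that this density is a true $\mathbb{P}$-martingale, so $\mathbb{Q}$ is a probability measure equivalent to $\mathbb{P}$, and $\hat{W}_Q$ is a $Q$-Wiener process on $(\Omega, \mathcal{F}, \mathbb{Q})$. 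This is the exact $Q$-Wiener counterpart of the step in \eqref{w_hat}--\eqref{girsanov_measure}.

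Second, substituting $\rd W_Q(t) = \rd\hat{W}_Q(t) + \psi(t)\,\rd t$ into \eqref{sup:X} yields
\begin{equation*}
\rd X = \bigl(\A X + F(t, X)\bigr)\rd t + \tfrac{1}{\sqrt{\rho}}\psi(t)\,\rd t + \tfrac{1}{\sqrt{\rho}}\rd\hat{W}_Q(t),
\end{equation*}
and the added drift matches precisely the control term appearing in \eqref{sup:X_tilde} by the definition $\psi(t) = \sqrt{\rho}\,\calU(t, \tilde X(t))$. Therefore, under $\mathbb{Q}$, $X$ satisfies an SPDE of exactly the same form as $\tilde X$ does under $\mathbb{P}$, with the same initial condition and the same (in-law) $Q$-Wiener driving noise. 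Uniqueness of strong solutions for semilinear SPDEs with Lipschitz $F$ (cf.\ \cite[Theorem~7.2]{da1992stochastic}) then gives $\mathbb{P}(\tilde X(\cdot,\omega)\in\Gamma) = \mathbb{Q}(X(\cdot,\omega)\in\Gamma)$ for every $\Gamma \in C([0,T];H)$, and rewriting the right-hand side through the Radon--Nikodym derivative produces exactly \eqref{supeq:Lg}.

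The main obstacle is bookkeeping rather than conceptual: one must keep track of the fact that in the $Q$-Wiener setting the shift $\psi$ lives in the Cameron--Martin type subspace $U_0 = \sqrt{Q}(U)$ with pullback inner product $\langle u,v\rangle_{U_0} = \langle Q^{-1/2}u, Q^{-1/2}v\rangle_U$, and that the stochastic integral $\int_0^{\cdot}\langle \psi(s), \rd W_Q(s)\rangle_{U_0}$ must be interpreted accordingly; once this identification is made, the exponential martingale and its associated drift calculation coincide with the cylindrical case up to relabeling. Importantly, because $Q$ is of trace class, the integral $\int_0^{t} e^{(t-s)\A}\sqrt{Q}\rd W(s)$ is well defined without imposing contractivity on $\A$, so no additional regularity hypotheses on $\A$ enter the argument beyond those already used to establish existence and uniqueness of \eqref{sup:X} and \eqref{sup:X_tilde}.
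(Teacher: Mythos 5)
Your proposal is correct and follows essentially the same route as the paper, which in fact dispenses with the argument entirely by stating that the proof is identical to that of Theorem \ref{girs}; you have simply spelled out the adaptation (shifted process $\hat{W}_Q$, the density from \cite[Theorem 10.14]{da1992stochastic} under the Novikov-type condition \eqref{sup:psi_assum1}, substitution into the dynamics, and uniqueness of solutions) that the paper leaves implicit. Your closing remark about the $U_0$ inner-product bookkeeping and the removal of the contractivity assumption on $\A$ matches the discussion the paper gives just before the theorem.
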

\begin{proof}
The proof is identical to the proof of Theorem \ref{girs}.
\end{proof}

Note that $\psi(t)$ in this case is identical to $\psi(t)$ in Theorem \ref{girs}. As a result, despite having $Q$-Wiener noise,  we have the same variational optimization for this case as in Section \rm{III} of the main text.
%\textbf{why have this result here? also this is only true when $Q$ multiplies controls as well}

%%%%%%%%%%%%%%%%%%%%%%%%%%%%%%%%%%%%%%%%%%%%%%%%%%%%%%%%%%%%%%%%%%%%%%%%%%%%%%%%%%%%%%%%%%%%%%%%%%NEW_SECTION%%%%%%%%%%%%%%%%%%%%%%%%%%%%%%%%%%%%%%%%%%%%%%%%%%%%%%%%%%%%%%%%%%%%%%%%%%%%%%%%%%%%%%%%%%%%%%%%%%%%%%%%%%%

\subsection{A Comparison to Variational Optimization in Finite Dimensions}\label{supsec:Comparison_finite}

In what follows we show how degeneracies arise for a similar derivation in finite dimensions. The stochastic dynamics are given by:
\begin{equation}\label{supeq:SPDEs_control}
  \rd X = \big(\A X   + F(t, X) \big) \rd t+G(t, X) \big(\calU(t, X)  \rd t +   \frac{1}{\sqrt{\rho}}  \rd W(t) \big),
 \end{equation} 
where W(t) is a cylindrical Wiener process. Now, let the Hilbert space state vector $X(t)\in H$ be approximated by a finite dimensional state vector $X(t) \approx \hat{X}(t)\in \Rb^n$ with arbitrary accuracy, where $n$ is the number of grid points. In order to rewrite a finite dimensional form of \eqref{supeq:SPDEs_control}, the cylindrical Wiener noise term $W(t)$ must be captured by a finite dimensional approximation. The expansion of $W(t)$ in \eqref{eq:Wiener_expansion} is restated here and truncated at $m$ terms:
 \begin{equation}\label{supeq:Q_Wiener_approx}
    W(t) = \sum_{j=1}^{\infty}  \sqrt{\lambda_{j}} \beta_{j}(t) e_{j} =  \sum_{j=1}^{\infty}  \beta_{j}(t) e_{j} \approx \sum_{j=1}^m \beta_j(t) e_j %= \mathcal{R} \vbeta 
 \end{equation}
where $\lambda_{j} = 1$, $\forall j \in \mathbb{N}$ in the case of cylindrical Wiener noise, and $\beta_j(t)$ is a standard Wiener process on $\mathbb{R}$. The stochastic dynamics in \eqref{supeq:SPDEs_control} become a finite set of SDEs:
\begin{equation}\label{supeq:Finite_DimensionalSDE}
\rd \hat{X} = \big(\mathcal{A} \hat{X} + \mathcal{F}(t,\hat{X}) \big)\rd t + \mathcal{G}(t,\hat{X})\big( \mathcal{M} \vu(t;\vtheta) \rd t + \frac{1}{\sqrt{\rho}} \mathcal{R}\rd \vbeta(t) \big)
\end{equation}
 The terms   $\mathcal{A}$, $\mathcal{F}$, and $\mathcal{G}$ are matrices associated with the Hilbert space operators $\A$, $F$, and $G$ respectively. The matrix  $ \mathcal{M}  $   has dimensionality $\mathcal{M} \in \mathbb{R}^{n \times k} $, where $ k  $ is the number of actuators placed in the field.    The vector $\rd \vbeta \in \mathbb{R}^m$ collects the Wiener noise terms in the expansion \eqref{supeq:Q_Wiener_approx}, and the matrix $\mathcal{R}$ collects finite dimensional basis vectors from \eqref{supeq:Q_Wiener_approx}. As noted in the main paper, the dimensionality of the $\mathcal{R} $ is $\mathcal{R}  \in \mathbb{R}^{n \times m}  $. The degeneracy arises when $ n>m $ for the case of the cylindrical noise. For the case of Q-Wiener noise, degeneracy may arises even when $ n\leq m  $ and Rank$(\mathcal{R}) < n $.  In both cases, the issue of degeneracy   prohibits the use of Girsanov theorem for the importance sampling steps due to the lack of invertibility of  $ \mathcal{R} $.   With respect to the approach relying on Gaussian densities, the derivation would  require the following time discretization of the reduced order model in \eqref{supeq:Finite_DimensionalSDE}: 
  
  \begin{align}
     \hat{X}(t+ \Delta t)   &=   \hat{X}(t) + \int_{t}^{t+\Delta t} \Big(\mathcal{A} \hat{X} + \mathcal{F}(t,\hat{X}) \Big)\rd t +   \int_{t}^{t+\Delta t}\mathcal{G}(t,\hat{X})\Big( \mathcal{M} \vu(t;\vtheta) \rd t + \frac{1}{\sqrt{\rho}} \mathcal{R}\rd \vbeta(t) \Big)  \\
     & \approx    \hat{X}(t) +  \Big(\mathcal{A} \hat{X} + \mathcal{F}(t,\hat{X}) \Big)\Delta t +   \mathcal{G}(t,\hat{X})\Big( \mathcal{M} \vu(t;\vtheta) \Delta t + \frac{1}{\sqrt{\rho}} \mathcal{R}\rd \vbeta(t) \Big) \\ 
  \end{align}

Without loss of generality we  simplify the expression above by assuming the  $ \mathcal{G}(t,\hat{X}) = I_{n \times n}$. The transition probability will take the following form:
\small
\begin{equation}
 \rp\big(\hat{X}(t+ \Delta t) | \hat{X}(t)\big) = \frac{1}{(\sqrt{2 \pi})^{n} (\det{\Sigma_{\hat{X}}})^{\frac{1}{2}}} \exp\bigg(- \frac{1}{2}\Big(\hat{X}(t+ \Delta t) - \mu_{\hat{X}}(t+\Delta t)  \Big)^{\top} \Sigma_{\hat{X}}^{-1} \Big(\hat{X}(t+ \Delta t) - \mu_{\hat{X}}(t+ \Delta t)  \Big) \bigg)  
\end{equation}\normalsize

where the term   $ \mu_{\hat{X}}(t+ \Delta t) $  is the mean and  $\Sigma_{\hat{X}} $  is the variance defined as follows:

\begin{align}
    \mu_{\hat{X}}(t+ \Delta t) &=   \hat{X}(t) +  \big(\mathcal{A} \hat{X} + \mathcal{F}(t,\hat{X}) \big)\Delta t + \mathcal{M} \vu(t;\vtheta) \Delta t \\
    \Sigma_{\hat{X}} &=  \frac{1}{\rho} \mathcal{R}\mathcal{R}^{\rT} \Delta t
\end{align}
The existence of the transition probability densities requires invertibility of $ \mathcal{R} \mathcal{R}^{\rT} $ which is not possible when $ n<m $ or when $ \text{Rank}(\mathcal{R})<n $ for  $ n \geq m $. 

\section{Details on Numerical Implementation}\label{supsec:results}

\subsection{Algorithms for Open Loop and Model Predictive Infinite Dimensional Controllers}
\label{supsec:Pseudocode}

The following algorithms use equations derived in \cite{lord_powell_shardlow_2014} for finite difference approximation of semi-linear SPDEs for Dirichlet and Neumann Boundary conditions. Spatial discretization is done as follows: pick a number of coordinate-wise discretization points $J$ on the coordinate-wise domain $\mathcal{D}=[a,b] \subset \mathbb{R}$ such that each spatial coordinate is discretized as $x_k=a+k\,\frac{b-a}{J}$ where $k=0,\,1,\,2,\,\dots,J$. For our experiments, the function that specifies how actuation is implemented by the infinite dimensional control is of the following form: 
\begin{equation}
m_l (x_k;\theta)=\exp \Big[ \frac{-1}{2 \sigma_l^2} (x_k - \mu_l)^2 \Big], \quad l=1,\dots,N
\label{eqn:actuator}
\end{equation}
where, $\mu_l$ denotes the spatial position of the actuator on $[a,b]$ and $\sigma_l$ controls the influence of the actuator on nearby positions.
\par Next we provide two algorithms for infinite dimensional stochastic control. In particular, Algorithm \ref{Algorithm1} is for open-loop trajectory optimization and Algorithm \ref{Algorithm2} is for Model Predictive control that uses implicit feedback.

\begin{algorithm}[H]
 \caption{Open Loop Infinite Dimensional Controller}
 \begin{algorithmic}[1]
 \State \textbf{Function:} \textit{u = \textbf{OptimizeControl}(Time horizon ($T$), number of optimization iterations ($I$), number of trajectory samples per optimization iteration ($R$), initial field profile ($X_0$), number of actuators ($N$), initial control sequences ($u_{T\times N}$) for each actuator, temperature parameter ($\rho$), time discretization ($\Delta t$), actuator centers and variance parameters ($\theta$))}
 \For{$i=1 \;\text{to}\; I$}
 \State Initialize $X \gets X_0$
 \For{$r=1\;\text{to}\;R$}
 \For{$t=1\;\text{to}\;T$}
  \State Sample noise, $\rd W(t,x_k)=\sum_{j=1}^{J}\big( \,e_j(t,x_k)\,\beta_j(t)\big)$, $e_j=\sqrt{2/a} \,sin(j\pi x/a)$ for $x\in L^2(0,a)$  
  \State Compute entries of the actuation matrix $\tilde{M}$ by \eqref{eqn:actuator}
  \State Compute the control actions applied to each grid point, $\mathcal{U}(t)=u(t)^{T}\,\tilde{M}$
  \State Propagate the discretized field $X(t)$ \cite[Algorighm 10.8]{lord_powell_shardlow_2014}
  \Statex \quad \quad \quad \textbf{end for} 
 \EndFor
  \State Compute trajectory cost $J_r^{(i)}$ via \cref{eq:J_i} of the main text
  \Statex \quad \quad \textbf{end for}
  \EndFor
  \State Compute exponential weight of each trajectory $\mathcal{J}_r^{(i)} := \exp\big(-\rho J_r^{(i)}(X)\big)$
  \State Compute the normalizer $\mathcal{J}_m^{(i)}=\frac{1}{R}\,\sum_{r=1}^{R}\mathcal{J}_r^{(i)} $
  \State Update nominal control sequence by \cref{eq:Iterative_optimalvariation1} of the main text
  \Statex \textbf{end for} 
 \EndFor
 \State \textbf{Return:} u
 \end{algorithmic}
 \label{Algorithm1}
\end{algorithm}

\begin{algorithm}[H]
\caption{Model Predictive Infinite Dimensional Controller}
\begin{algorithmic}[1]
\State \textbf{Inputs:} MPC time horizon ($T$), number of optimization iterations ($I$), number of trajectory samples per optimization iteration ($R$), initial profile ($X_0$), number of actuators ($N$), initial control sequences ($u_{T\times N}$) for each actuator, temperature parameter ($\rho$), time discretization ($\Delta t$), actuator centers and variance parameters ($\theta$), total simulation time ($T_{\text{sim}}$)
\For{$t_{\text{sim}}=1 \;\text{to}\; T_{\text{sim}}$}
    \State $u_I(t_{\text{sim}}) = $ \textbf{\textit{OptimizeControl}} $\!\!(T,I,R,X_0, N, u, \rho, \Delta t, \theta)$ 
    \State Apply $u_I(t=1)$ and propagate the discretized field to $t_{\text{sim}}+1$
    \State Update the initial field profile $X_0 \gets X(t_{\text{sim}}+1)$
    \State Update initial control sequence $u= \big[ u_I[2:T,:];\;u_I[T,:]\big]$
    \Statex \textbf{end for}    
\EndFor
\end{algorithmic}
\label{Algorithm2}
\end{algorithm}

For MATLAB pseudo-code on sampling space-time noise (step 6 in algorithm \ref{Algorithm1} and step 7 in algorithm \ref{Algorithm2}), refer to \cite[algorithms 10.1 and 10.2]{lord_powell_shardlow_2014}. Note however, that our experiments used cylindrical Wiener noise so $\lambda_j = 1$ $\forall j=1,\dots,J$.

%%%%%%%%%%%%%%%%%%%%%%%%%%%%%%%%%%%%%%%%%%%%%%%%%%%%%%%%%%%%%%%%%%%%%%

\subsection{Brief description of each experiment}
The following is additional information about the experiments referenced in Section V. Section \ref{supsubsec:HeatSPDE} describes boundary and distributed control experiments, while Sections \ref{supsubsec:BurgersSPDE} and \ref{supsubsec:NagumoSPDE} describe experiments for distributed control only.
\subsubsection{Heat SPDE} \label{supsubsec:HeatSPDE}
The 2D stochastic Heat PDE with homogeneous Dirichlet boundary conditions given by: 
\begin{equation} \label{supeq:HeatSPDE}
\begin{split}
h_t(t, x, y) &= \epsilon h_{xx}(t,x,y) + \epsilon h_{yy}(t,x,y) + \sigma dW(t), \\
h(t,0,y) &= h(t,a,y) = h(t,x,0)= h(t,x,a)=0, \\ 
h(0,x,y) &\sim \mathcal{N}(h_0;0,\sigma_0), 
\end{split}
\end{equation}
where the parameter $\epsilon$ is the so called thermal diffusivity, which governs how quickly the initial temperature profile diffuses across the spatial domain. \eqref{supeq:HeatSPDE} considers the scenario of controlling a metallic plate to a desired temperature profile using 5 actuators distributed across the plate. The edges of the plate are always held at constant temperature of 0 degrees Celsius. The parameter $a$ is the length of the sides of the square plate, for which we use $a=0.5$ meters. 

The actuator dynamics are modeled by Gaussian-like exponential functions with the means co-located with the actuator locations at: $\vmu = \big[\mu_1, \mu_2, \mu_3, \mu_4, \mu_5 \big] =  \big[(0.2a,0.5a), (0.5a,0.2a), (0.5a,0.5a), \\ (0.5a,0.8a), (0.8a,0.5a)\big]$ and the variance of the effect of each actuator on nearby field states given by $\sigma_l^2 = (0.1a)^2$, $\forall l = 1, \dots, 5$. For every $j= 1, \dots, J$, and $l=1, \dots, N$, the resulting $m_l(\vx)$ has the form:
\small
\begin{equation*}
    m_{l,j}\left(\left[\begin{array}{c} x \\ y\end{array} \right]\right) = \exp \left\lbrace -\frac{1}{2}\left(\left[ \begin{array}{c} x \\ y
    \end{array} \right] - \left[ \begin{array}{c} \mu_{l,x} \\ \mu_{l,y} \end{array} \right] \right)^\top \left[ \begin{array}{cc}
         \sigma_l^2 & 0 \\
         0 & \sigma_l^2 
    \end{array} \right] \left(\left[ \begin{array}{c} x \\ y
    \end{array} \right] - \left[ \begin{array}{c} \mu_{l,x} \\ \mu_{l,y} \end{array} \right] \right) \right\rbrace
\end{equation*}\normalsize

The spatial domain is discretized by dividing the x and y domains into 64 points each creating a grid of $64 \times 64$ spatial locations on the plate surface. For our experiments, we use a semi-implicit forward Euler discretization scheme for time and central difference for the $2^{nd}$ order spatial derivatives $h_{xx}$ and $h_{yy}$. We used the following parameter values, time discretization $\Delta t=0.01s$, MPC time horizon $T = 0.05s$, total simulation time $T_{\text{sim}} = 1.0s$, thermal diffusivity $\epsilon=1.0$ and initialization standard deviation $\sigma_0=0.5$.
The cost function considered for the experiments was defined as follows: 
\begin{equation*}
    J := \sum_{t} \sum_{x} \sum_{y} \;\kappa \big(h_{\text{actual}}(t,x,y) - h_{\text{desired}} (t,x,y)\big)^2 \cdot \mathbbm{1}_{S}(x,y)
\end{equation*}
where $S := \cup_{i=1}^5 S_i$ and the indicator function $ \mathbbm{1}_{S}(x,y) $  is defined as follows:
\begin{equation}\label{supeq:indicator}
 \mathbbm{1}_S(x,y) :=
\begin{cases}
1,  \quad \text{if} ~~~(x,y) \in S  \\
0, \quad  \text{otherwise}
\end{cases}
\end{equation}
% \quad  \text{if}  ~~~0.48a  \leq x \leq 0.52 \;\text{and}\; 0.48a  \leq y \leq 0.52,   ~~~  \text{or} ~~~0.48  \leq x \leq 0.52   ~~~ \text{or} ~~~  0.78  \leq x \leq 0.82
where\\ 
$S_1 = \{(x,y) \mid  x \in [0.48a, 0.52a] \;\text{and}\; y \in [0.48a, 0.52a]\}$ is in the central region of the plate\\
$S_2 = \{(x,y) \mid  x \in [0.22a, 0.18a] \;\text{and}\; y \in [0.48a, 0.52a]\}$ is the left-mid region of the plate \\
$S_3 = \{(x,y) \mid x \in [0.82a, 0.78a] \;\text{and}\; y \in [0.48a, 0.52a]\}$ is the right-mid region of the plate \\
$S_4 = \{(x,y) \mid x \in [0.48a, 0.52a] \;\text{and}\; y \in [0.18a, 0.22a]\}$ is in the top-central region of the plate \\
$S_5 = \{(x,y) \mid x \in [0.48a, 0.52a] \;\text{and}\; y \in [0.78a, 0.82a]\}$ is in the bottom-central region of the plate \\
\\
In addition  $ h_{\text{desired}} (t,x,y)= 0.5^{\circ}\,C $  for $(x,y) \in S_1$ and   $ h_{\text{desired}} (t,x,y)= 1.0^{\circ}\,C $  for $(x,y) \in \cup_{i=2}^5 S_i$ and the scaling parameter $\kappa=100$.

In the boundary control case, we make use of the 1D stochastic heat equation given as follows: 
\begin{align*}
h_t(t,x) &= \epsilon h_{xx}(t,x) +  \sigma dW(t)\\
h(0,x) &= h_0(x)
\end{align*}

For Dirichlet and Neumann boundary conditions we have $h(t,x) =\gamma(x)$, $\forall x\in\partial O$ and $h_x(t,x) =\gamma(x)$, $\forall x\in\partial O $, respectively. Regarding our 1-D boundary control example, we set $\epsilon=1 $, $\sigma=0.1$, $h_x(t,0)=u_1(t)$ and $h_x(t,a)=u_2(t)$. In this case, $m_l(x)$ is simply given by the identity function and the corresponding inner products associated with Girsanov's theorem are given by the standard dot product. Finally, the cost function used is the same as above with $S=\{x|0<x<a\}$ and
\begin{equation*}
h_{desired}(t,x)=
\begin{cases}
1, \quad\text{for}\hspace{1mm} t\in[0, 0.4],\\
3, \quad\text{for}\hspace{1mm} t\in[0, 0.4]\hspace{1mm} \text{and}\hspace{1mm} t\in[0.8, 1.3].
\end{cases}
\end{equation*}

\subsubsection{Burgers SPDE} \label{supsubsec:BurgersSPDE}
The 1D stochastic Burgers PDE with non-homogeneous Dirichlet boundary conditions is as follows:
\begin{equation} \label{supeq:BurgersSPDE}
\begin{split}
h_t(t, x) + h h_x(t, x) &= \epsilon h_{xx}(t,x) + \sigma dW(t)\\
h(t,0) &= h(t,a) = 1.0\\ 
h(0,x) &= 0, \; \forall x \in (0,a)
\end{split}
\end{equation}
where the parameter $\epsilon$ is the viscosity of the medium. \eqref{supeq:BurgersSPDE} considers a simple model of a 1D flow of a fluid in a medium with non-zero flow velocities at the two boundaries. The goal is to achieve and maintain a desired flow velocity profile at certain points along the spatial domain. As seen in the desired profile in Fig. 3 of the main paper, there are 3 areas along the spatial domain with desired flow velocity such that the flow has to be accelerated, then decelerated, and then accelerated again while trying to overcome the stochastic forces and the dynamics governed by the Burgers PDE. Similar to the experiments for the Heat SPDE, we consider actuators behaving as Gaussian-like exponential functions with the means co-located with the actuator locations at: $\vmu = \big[0.2a, 0.3a, 0.5a, 0.7a, 0.8a\big]$ and the spatial effect (variance) of each actuator given by $\sigma_l^2 = (0.1a)^2$, $\forall \, l = 1, \dots, 5$. The parameter $a=2.0\;m$ is the length of the channel along which the fluid is flowing. 

This spatial domain was discretized using a grid of 128 points. The numerical scheme used semi-implicit forward Euler discretization for time and central difference approximation for both the $1^{st}$ and $2^{nd}$ order derivatives in space. The $1^{st}$ order derivative terms in the advection term $h h_x$ were evaluated at the current time instant while the $2^{nd}$ order spatial derivatives in the diffusion term $h_{xx}$ were evaluated at the next time instant, hence the scheme is semi-implicit. Following are values of some other parameters used in our experiments: time discretization $\Delta t=0.01$, total simulation time = $1.0\,s$, MPC time horizon = $0.1\,s$, and the scaling parameter $\kappa=100$. The cost function considered for the experiments was defined as follows: 
\begin{equation*}
    J := \sum_{t} \sum_{x} \;\kappa \big(h_{\text{actual}}(t,x) - h_{\text{desired}} (t,x)\big)^2 \cdot \mathbbm{1}_S(x)
\end{equation*}
where the function  $ \mathbbm{1}_S(x) $  is defined as in \eqref{supeq:indicator} with $S = \cup_{i=1}^3$, where $S_1 = [0.18a,0.22a]$, $S_2 = [0.48a,0.52a]$, and $S_3 = [0.78a,0.82a] $.
%follows 
%$$
% \mathbbm{1}(x) =
%\begin{cases}
%1, \quad  \text{if }  x \in 
%\lbrace[0.18a,0.22a],[0.48a,0.52a],[0.78a,0.82a]\rbrace \\%~~~0.18a  \leq x \leq 0.22a,  ~~~  \text{or} ~~~0.48a  \leq x \leq 0.52a   ~~~ \text{or} ~~~  0.78a  \leq x \leq 0.82a \\
%0, \quad  \text{otherwise}
%\end{cases}
%$$
In addition  $ h_{\text{desired}} (t,x)= 2.0 \;m/s$  for $x \in S_1 \cup S_3$ which is at the sides, and $ h_{\text{desired}} (t,x)= 1.0 \;m/s$  for $x\in S_2$ which is in the central region.

\subsubsection{Nagumo SPDE} \label{supsubsec:NagumoSPDE} The stochastic Nagumo equation with Neumann boundary conditions is as follows: 
\begin{align*}
h_t(t,x) &= \epsilon h_{xx}(t,x) + h(t,x)\big(1-h(t,x)\big)\big(h(t,x)-\alpha\big) + \sigma dW(t)\\ h_x(t,0) &= h_x(t,a) = 0\\ 
h(0,x) &= \big(1+\exp(-\frac{2-x}{\sqrt[]{2}})\big)^{-1}
\end{align*}
The parameter $\alpha$ determines the speed of a wave traveling down the length of the axon and $\epsilon$ the rate of diffusion. By simulating the deterministic Nagumo equation with $a=5.0,\,\epsilon=1.0$ and $\alpha=-0.5$, we observed that after about 5 seconds, the wave completely propagates to the end of the axon. Similar to the experiments for the Heat SPDE, we consider actuators behaving as Gaussian-like exponential functions with actuator centers (mean values) at $\vmu = \big[0.2a, 0.3a, 0.4a, 0.5a, 0.6a, 0.7a, 0.8a\big]$ and the spatial effect (variance) of each actuator given by $\sigma_l^2 = (0.1a)^2$, $\forall \, l = 1, \dots, 7$. The spatial domain was discretized using a grid of 128 points. The numerical scheme used semi-implicit forward Euler discretization for time and central difference approximation for the $2^{nd}$ order derivatives in space. Following are values of some other parameters used in our experiments: time discretization $\Delta t=0.01$, MPC time horizon = $0.1\,s$, total simulation time = $1.5\,s$ for acceleration task and total simulation time = $5.0\,s$ for the suppression task, and the scaling parameter $\kappa=10000$. The cost function for this experiment was defined as follows: 
\begin{equation*}
    J = \sum_{t} \sum_{x} \;\kappa \big(h_{\text{actual}}(t,x) - h_{\text{desired}} (t,x)\big)^2 \cdot \mathbbm{1}_S(x)
\end{equation*}
where $ h_{\text{desired}} (t,x)= 0.0\; V$ for the suppression task, and $ h_{\text{desired}} (t,x)= 1.0\; V $ for the acceleration task, and the function  $\mathbbm{1}_S(x) $  is defined as in \eqref{supeq:indicator} with $S=[0.7a,0.99a]$.

\end{document}